\theoremstyle{plain}
\newtheorem{theorem}{Theorem}[section]
\newtheorem{lemma}[theorem]{Lemma}
\newtheorem{corollary}[theorem]{Corollary}
\newtheorem{proposition}[theorem]{Proposition}
\theoremstyle{remark}
\newtheorem{definition}[theorem]{Definition}
\newtheorem{remark}[theorem]{Remark}
\renewcommand{\eqref}[1]{\textnormal{(\ref{#1})}}
\numberwithin{equation}{section}
\newcommand{\C}{\mathcal{C}}
\newcommand{\R}{\mathcal{R}}
\newcommand{\Q}{\mathcal{Q}}
\newcommand{\M}{\mathcal{M}}
\renewcommand{\S}{\mathcal{S}}
\def\Oh{{\mathcal  O}}
\newcommand{\bl}[1]{\color{black}#1}
\title[Electromagnetic]{The Effective Permittivity and Permeability Generated by a Cluster of Moderately Contrasting  Nanoparticles}
\author{Xinlin Cao}
\address{Radon institute (RICAM), Austrian Academy of Sciences, 69 Altenbergerstrasse, A4040, Linz, Austria. This author is supported by the Austrian Science Fund (FWF): P 32660.}
\email{xinlin.cao@oeaw.ac.at; xlcao.math@foxmail.com}
\author{Mourad Sini}
\address{Radon institute (RICAM), Austrian Academy of Sciences, 69 Altenbergerstrasse, A4040, Linz, Austria, This author is partially supported by the Austrian Science Fund (FWF): P 32660.}
\email{mourad.sini@oeaw.ac.at}
\begin{document}

\begin{abstract}

In a $3D$ bounded and $C^{1, \alpha}$-smooth domain $\Omega$, $\alpha \in (0, 1)$, we distribute a cluster of nanoparticles enjoying moderately contrasting relative permittivity and permeability which can be anisotropic. %We assume the nanoparticles to have the same fixed shape up to a given linear and unitary transformation (as a rotation). 
We show that the effective permittivity and permeability generated by such cluster is explicitly characterized by the corresponding electric and magnetic polarization tensors of the fixed shape.
The error of the approximation of the scattered fields corresponding to the cluster and the effective medium is inversely proportional to the dilution parameter $c_r:=\frac{\delta}{a}$ where $a$ is the maximum diameter of the nanoparticles and $\delta$ the minimum distance between them. The constant of the proportionality is given in terms of a priori bounds on the cluster of nanoparticles (i.e. upper and lower bounds on their permittivity and permeability parameters, upper bound on the dilution parameter $c_r$, the used incident frequency and the domain $\Omega$).

%In particular, we show that given any symmetric permittivity and permeability, locally variable and a bounded and smooth domain, we can design a corresponding cluster of nanoparticles, that can generate the same scattered fields.

A key point of the analysis is to show that the Foldy-Lax field appearing in the meso-scale approximation, derived in \cite{AM}, is a discrete form of a (continuous) system of Lippmann-Schwinger equations with a related effective permittivity and permeability contrasts. To derive this, we prove that the Lippmann-Schwinger operator, for the Maxwell system, is invertible in the H\"{o}lder spaces. As a by-product, this shows a H\"{o}lder regularity property of the electromagnetic fields up to the boundary of the inhomogeneity.

\medskip 
\noindent{\bf Keywords}: Maxwell system, anisotropic permittivity and permeability, effective medium theory, integral equations.
 
\medskip
\noindent{\bf AMS subject classification}: 35C15; 35C20; 35Q60

\end{abstract}

\maketitle

\section{Introduction}\label{sec:Intro}
\subsection{Background and motivation}

In this work, we are interested in estimating the electromagnetic fields generated by a cluster of small particles enjoying moderate contrasts of both their electric permittivity and magnetic permeability as compared with the ones of the background. Such a topic, which enters in the general framework of understanding the interaction between waves and matter, traces back at least as far as to the pioneering works of Rayleigh's and Kirchhoff's. At their time, it was known already that the diffracted wave by small
scaled inhomogeneities is dominated by the first multi-poles (poles or dipoles) which  are given by (polarized) point sources located inside the particles. In case of spherically shaped particles, Mie \cite{Mie} derived the full expansion of the electromagnetic fields.These formal expansions were later mathematically justified, see for instance \cite{Dassios}, in the framework of low frequencies expansions. A further step was achieved by Ammari and Kang, see \cite{Ammari-book-1, Ammari-book-2}, where the full expansion at any order and any shape is derived and justified.

These mentioned key works focused on a single or well-separated particles where their interactions is neglected . When the particles are close enough, i.e. behave as a cluster, the mutual interactions between them and the waves must be taken into account. Motivated by the propagation of acoustic waves in the presence of small bubbles, Foldy proposed, in his seminal work \cite{Foldy}, a formal way how to handle the multiple interactions between the bubbles. Under the assumption that the bubbles behave as point-like potentials, he states a close form of the scattered wave by simply eliminating the singularity on the locations of these potentials (see \cite{Martin-book} for more details). Later on, these formal representations of the scattered waves were justified by Berezin and Faddeev, see \cite{Berezin}, in the framework of the Krein's selfadjoint extension of symmetric operators. Another approach to give sense to Foldy's formal method is the regularization method, see \cite{Albeverio-et-al}. Let us mention here that the link between Foldy's result, the one by Berezin and Faddeev and the one in  \cite{Albeverio-et-al} was made in a recent work \cite{Hu-Mantile-Sini}. There was no mention of Foldy's work in Berezin and Faddeev's work \cite{Berezin} nor in \cite{Albeverio-et-al}. These two approaches give us, via the Krein's resolvent representations or regularization, exact formulas to represent the scattered waves
generated by singular potentials. 

Inspired by these representations, it is natural to expect the dominant part of the fields, generated by a cluster of small scaled particles, to be reminiscent to the exact formulas described above with a difference that the scattering coefficients (which are also called the polarization tensors) entering into the close form dominating field are modeled by geometric or contrasts properties of the inhomogeneities. This is called
the Foldy-Lax approximation or the point-interaction approximation. 

There are different ways to derive these dominant fields. We can cite the variation method, via the maximum principle, as proposed by Maz'ya and Movchan \cite{Mazya-1} or integral equations as proposed firstly by Ramm \cite{Ramm-book-1} for some particular models and scaling regimes, and developed since then by many authors \cite{Challa-14, Challa-15, Challa-16}, \cite{Mazya-16, Mazya-17, Nieves-17} and \cite{Ramm-7,Ramm-14, Ramm-15}. We also cite the method of matched asymptotic expansions, see \cite{Bendali-1, Labat-Peron-Tordeux} and the references therein. Regarding the Maxwell system, apart from \cite{Ramm-14, Ramm-15} which are derived more formally with questionable issues, few results are known. The first complete results are derived in \cite{AM-1} for the conductive particles and in \cite{AM} for particles enjoying moderate contrasts of the permittivity and permeability as compared to the ones of the background. The important issue in those two works is that the meso-scaled regime could be handled. In this regime the maximum radius of the particles $a$ and the minimum distance $\delta$ between them could be of the same order. The dilution parameter $c_r:=\frac{\delta}{a}$ indicates how dense the particles can be distributed. The main result in \cite{AM} is to have derived the Foldy-Lax approximation in the meso-scale regime with an error of approximation inversely proportional to $c_r$. In addition, the close form of the dominating field is provided taking into account both the electric permittivity and magnetic permeability contrasts, allowing, eventually, their anisotropy and non-symmetry.  

The work under consideration is related to the effective medium theory for Maxwell. The goal is to derive the effective permittivity and permeability that provides the same scattered electromagnetic field as the one derived in \cite{AM}. In the case of periodically distributed small inhomogeneities, the homogenization applies, see \cite{Bensoussan, Jikov} and the references therein, and provides the equivalent media with averaged materials. In our analysis, we do not rely on homogenization techniques. Rather, our starting point is the Foldy-Lax approximation derived in \cite{AM}. The key point of the analysis is to show that the Foldy-Lax field appearing in the meso-scale approximation is a discrete form of a (continuous) system of Lippmann-Schwinger equations with a related effective permittivity and permeability contrasts. To derive this, we prove an invertibility result for  Lippmann-Schwinger operator, for Maxwell, in the H\"{o}lder spaces. As a by-product, the corresponding electromagnetic fields has a H\"{o}lder regularity up to the boundary of the inhomogeneity. 

In this work, we discuss electromagnetic material with moderate and positive contrasts. The study of the wave propagation in highly contrasting or negative materials is highly attractive and we witness a rapid growth of the number of published works in the recent years. The reason is that for special large scales, we can have resonances (as Minnaert for the acoustics and Mie (or dielectric) or plasmonic resonances in electromagnetism). So far, most of the results concern the acoustic model in the presence of bubbles. The observation, first made in \cite{AFGLH}, is that when the contrast of both the mass density and bulk modulus have a similar but large contrasts, as compared to the ones of the background (the air, for instance), then we have a resonance, named Minnaert resonance. This is a resonance in the sense that if the used incident frequency is close to it then the scattering coefficient (the polarization tensor) becomes large. Such an enhancement can be used to generate volumetric and low dimensional metamaterials (materials with single or double negativity), see \cite{AH, AFGLH, AFLYH, ACP-2, ACP}. As far as the effective medium theory is concerned, few results are known for the electromagnetism with highly contrasting or negative permittivity of permeability. We can cite \cite{L-S:2016, BBF, Schweizer:2017} who assume periodicity and derive the equivalent coefficients, via homogenization, for dielectric nanoparticles. However, we do believe that in the near future this gap will be filled.

\subsection{The electromagnetic scattered fields generated by a cluster of nanoparticles.}\label{subsec1}

We denote by $D:=\cup_{m=1}^{M}D_m$ a collection of $M$ connected, bounded and Lipschitz-smooth particles of $\mathbb{R}^3$. We deal with the electromagnetic wave propagation from this collection of particles when excited by time-harmonic electromagnetic plane wave at a given frequency $\omega$.The nature of the wave propagation is described by the following Maxwell system:

\begin{equation}\label{model1}
	\begin{cases}
	\Delta \times\mathcal{E}-i \omega\mu\mathcal{H}=0,\quad \mbox{in}\quad \mathbb{R}^3\backslash\partial D,\\
	\Delta\times\mathcal{H}+i\omega\epsilon\mathcal{E}=\sigma\mathcal{E},\quad\mbox{in}\quad \mathbb{R}^3\backslash\partial D,\\
	\mathcal{E}=\mathcal{E}^{in}+\mathcal{E}^{s},\\
	\nu\times\mathcal{E}|_{+}=\nu\times\mathcal{E}|_{-}, \nu\times\mathcal{H}|_{+}=\nu\times\mathcal{H}|_{-}\quad\mbox{on}\quad \partial D,
	\end{cases}
\end{equation}
where $\mathcal{E}$ and $\mathcal{H}$ denote the total electric and magnetic fields, $\epsilon$ and $\mu$ are the electric permittivity and the magnetic permeability, respectively and $\sigma$ is the corresponding conductivity. The coefficients $\epsilon, \mu, \sigma$ can be either real or complex tensors or scalar valued functions. Here, $\mathcal{E}^{in}$, being an entire solution of the Maxwell equation, is an incident field and $\mathcal{E}^{s}$ stands for the associated scattered field. The surrounding background of $D$ is homogeneous with the constant parameters $\epsilon_0, \mu_0$ and null conductivity $\sigma$. Furthermore, the scattered wave fields $\mathcal{E}^s$ and $\mathcal{H}^s$ satisfy the Sylver-M\"{u}ller radiation conditions 

\begin{equation}\notag
	\sqrt{\mu_0\epsilon_0^{-1}}\mathcal{H}^{s}(x)\times\frac{x}{|x|}-\mathcal{E}^s(x)=\Oh(\frac{1}{|x|^2}).
\end{equation}

Let $k:=\omega\sqrt{\epsilon_0\mu_0}$, $\mathcal{E}:=\sqrt{\epsilon_0\mu_0^{-1}}E$ and $\mathcal{H}=\sqrt{\epsilon_0\mu_0^{-1}}H$ with $\mu_r:=\frac{\mu}{\mu_0}$ and $\epsilon_r:=\frac{\epsilon+i\sigma/\omega}{\epsilon_0}$, we then derive from \eqref{model1} that

\begin{equation}\label{model-m}
	\begin{cases}
	\mathrm{curl} E-i k \mu_rH=0,\quad\mathrm{curl} H+i k \epsilon_rE=0\quad\mbox{in}\ D,\\	
	\mathrm{curl} E-i k H=0,\quad\mathrm{curl} H+i k E=0\quad\mbox{in}\ \mathbb{R}^3\backslash D,\\
	E=E^{in}+E^s,\\
	\nu\times E|_{+}=\nu\times E|_{-}, \quad \nu\times H|_{+}=\nu\times H|_{-},\quad\mbox{on} \ \partial D,\\
	H^s\times\frac{x}{|x|}-E^s=\Oh({\frac{1}{|x|^2}}),\quad \mbox{as} \ |x|\rightarrow\infty,
	\end{cases}
\end{equation}
where $E^{in}$ and $H^{in}$ are the relative incident wave fields fulfilling the whole system in $\mathbb{R}^3$ that 
\begin{equation}\notag
	\begin{cases}
	\mathrm{curl} E^{in}-i k H^{in}=0,\\
	\mathrm{curl}H^{in}+i k E^{in}=0.
	\end{cases}
\end{equation}
%The last equation in \eqref{model-m} is known as the Silver-M\"{u}ller radiation condition.

Let the incident waves $(E^{in}, H^{in})$ be typical plane waves of the form $E^{in}=E^{in}(x, \theta)=P e^{i k\theta\cdot x}$ for $x\in\mathbb{R}^3$ and $H^{in}=H^{in}(x,\theta)=P\times\theta e^{i k \theta\cdot x}/i k$, where $P$ is the constant vector modeling the polarization direction and $\theta$ fulfilling $|\theta|=1$ and $P\cdot\theta=0$ is the incident direction. 

%The electromagnetic model \eqref{model-m} satisfies 

Let $D_m=a \mathbf{B}_m+\mathbf{z}_m, m=1,\cdots, M$, be the connected small components of the medium $D$, which are characterized by the parameter $a>0$ and the locations $\mathbf{z}_m$. Each $\mathbf{B}_m$ containing the origin is a bounded Lipschitz domain and fulfills that $\mathbf{B}_m\subset B_0^{1}$. In fact, we have the following definition.

%Now we give the notations for our subsequent use.

\begin{definition}\label{def-back}
	Define $a:=\max\limits_{1\leq m \leq M}\mathrm{diam}(D_m)$, and $\delta:=\min\limits_{{1\leq m,j\leq M}\atop{m\neq j}}\delta_{mj}:=\min\limits_{{1\leq m,j\leq M}\atop{m\neq j}} \mathrm{dist}(D_m, D_j)$.
		%\min_{x\in B(\mathbf{z}_m, a/2),\atop y\in B(\mathbf{z}_j, a/2)} \delta(x, y)
\end{definition}

Our analysis in this work is based on \cite{AM}. Therefore, we first recall the needed assumptions stated there regarding the model \eqref{model-m}. More details can be found in \cite{AM}.

\uppercase\expandafter{\romannumeral1}. \emph{Assumptions on the exterior domain}. The electric permittivity $\epsilon_0$ and the magnetic permeability $\mu_0$ of the background are real, scalar and constants  such that
\begin{equation}\notag
	k=\omega\sqrt{\epsilon_0\mu_0}>0.
\end{equation}

%\uppercase\expandafter{\romannumeral2}. \emph{Assumptions on the cluster of particles}. The minimum distance defined in Definition \ref{def-back} fulfill that
%\begin{equation}\label{dis}
%	\delta=c_r a,
%\end{equation}
%where $c_r>1$ is a constant. The total number $M$ of the particles is of the form $M\sim \delta^{-3}$.

%\begin{remark}
%	Indeed, by Definition \ref{def-back} and \eqref{dis}, we can know that
%	\begin{equation}\notag
%		a:=\max_{1\leq m \leq M}\mathrm{diam}(D_m)=h\cdot\max_{1\leq m \leq M}\mathrm{diam}(\mathbf B_m)=c_r a\cdot\max_{1\leq m \leq M}\mathrm{diam}(\mathbf B_m),
%	\end{equation} 
%	which indicates that $\max_{1\leq m \leq M}\mathrm{diam}(\mathbf B_m)=1/c_r$. Since $c_r>1$, there holds $\mathrm{diam}(B_m)<1$ for any $m=1,\cdots, M$, which coincides with the fact that $\mathbf{B}_m\subset B_0^{1}$.
%\end{remark}

\uppercase\expandafter{\romannumeral2}. \emph{Assumptions on the permittivity and permeability of each particle}. Assume that $\epsilon_r$ and $\mu_r$ satisfy the following  conditions:
\begin{enumerate}
	\item {\bl $\epsilon_r$ and $\mu_r$ are $\mathbb{W}^{1,\infty}$-regular and both positive definite real symmetric matrices.}
%	\begin{enumerate}[a)]
%		\bl{\item $\mu_r$(or $-\mu_r$) and $\Im \epsilon_r$ are real definite positive with arbitrary $\Re \epsilon_r$; or
%		
%		\item $\epsilon_r$ and $\mu_r$ are differently signed with $\Im \epsilon_r=0$; or
%		
%		\item $\epsilon_r$ and $\mu_r$ are both positive definite real symmetric matrices.}
%	\end{enumerate}

    \item Define the contrasts with respect to $\epsilon_r$ and $\mu_r$ as 
    \begin{equation}\notag
    	\mathcal{C}_K:=K-I,
    \end{equation}
    where $K=\epsilon_r, \mu_r$ and $I\in\mathbb{R}^3\times\mathbb{R}^3$ is the identity matrix. $\mathcal{C}_K$ is a {real} valued $3\times3$-tensor with the corresponding derivatives satisfy the essentially uniformly bounded condition as
    \begin{equation}\label{bdd-contr}
    	\lVert\mathcal{C}_K\rVert_{\mathbb{W}^{1,\infty}(\cup_{m=1}^MD_m)}\leq c_\infty,
    \end{equation}
    as well as the essentially uniformly coercive conditions:
    \begin{align}\label{coer-contr}
    	\Re \left(\mathcal{C}_{\epsilon_r}(x)\xi\cdot\bar{\xi}\right)&\geq c_\infty^{\epsilon-}|\xi|^2,\notag\\
    	\mathcal{C}_{\mu_r}(x)\xi\cdot\bar{\xi}&\geq c_\infty^{\mu-}|\xi|^2,
    \end{align}
    for $x\in D$, where $c_\infty^{\epsilon-}$ and $c_\infty^{\mu-}$ are two positive constants. 
%    In this paper, we assume that $\C_K, K=\epsilon_r, \mu_r$, are constant tensors.
\end{enumerate}

\uppercase\expandafter{\romannumeral3}. \emph{Assumptions on the distribution of the cluster of particles}. Let $\Omega$ be a bounded domain of unit volume, containing the particles $D_m$, $m=1,2,\cdots, M$, with $\partial\Omega$ which is $C^{1,\alpha}$-regular for $\alpha\in (0,1)$. We divide $\Omega$ into $[\delta^{-3}]$ subdomains $\Omega_m$, $m=1,2,\cdots, [\delta^{-3}]$, distributed periodically, such that $D_m\subset\Omega_m$. Then we see that $M=\Oh(\delta^{-3})$. Let each $\Omega_m$ be a cube of volume $\delta^3$. Furthermore, the minimum distance defined in Definition \ref{def-back} fulfills that
\begin{equation}\label{dis}
\delta=c_r a,
\end{equation}
where $c_r$ is a constant satisfying the lower bound
\begin{equation}\notag
	c_r=\frac{\delta}{a}\geq\max\left\{1, \frac{2c_\wedge k c_\infty^2}{\max(c_\infty^{\epsilon-},c_\infty^{\mu-})}\right\},
\end{equation}
with $c_\wedge$ being a positive constant independent on the parameters of the model. See Fig \ref{fig:omega-perodic} and Fig \ref{fig:local} for the schematic illustrations of the global distribution as well as the local relation between any two particles.

\begin{figure}[h!]
	\centering
	\includegraphics[width=0.4\linewidth]{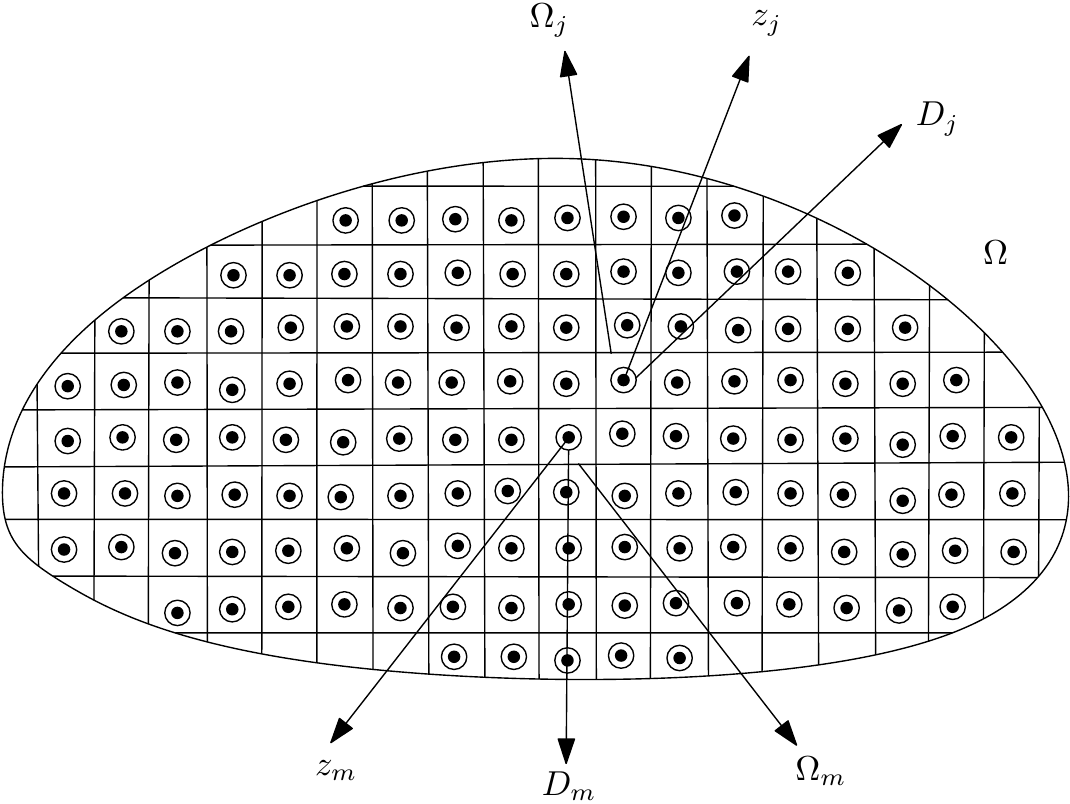}
	\caption{A schematic illustration for the global distribution of the particles in $\Omega$.}
	\label{fig:omega-perodic}
\end{figure}

\begin{figure}[h!]
	\centering
	\includegraphics[width=0.3\linewidth]{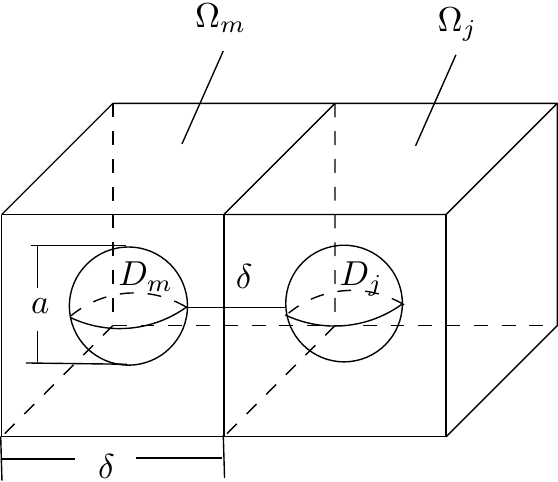}
	\caption{A schematic illustration for the local relation between any two particles.}
	\label{fig:local}
\end{figure}

\begin{remark}\label{rem1}
	In the above condition \uppercase\expandafter{\romannumeral3}, without loss of generality, we assume that the particles are distributed periodically in $\Omega$. However, the periodicity is used only for simplicity of the exposition. It is not required to derive our results.  We refer to \cite{Helm1} for the case of non-periodic distribution of the particles.
\end{remark}

\begin{remark}\label{layer}
	Since the shape of $\Omega$ is arbitrary, the intersecting part between the set of the cubes and $\partial\Omega$ is not necessarily to be empty, unless $\Omega$ is a cube. It is known that for any $m=1,\cdots, M$, the volume of $\Omega_m$ satisfies $|\Omega_m|=\delta^3$, which implies that the maximum radius of $\Omega_m$ is of order $\delta$, and thus the intersecting surfaces with $\partial \Omega$ possess the area of order $\delta^2$. Since the total area of $\partial\Omega$ is of order one, we can derive that the number of such particles will not exceed of order $\delta^{-2}$, and therefore the volume of this set is of order $\delta$, for sufficiently small $a$.
\end{remark}

%\begin{remark}
%	In Fig \ref{fig:local}, we take $D_m$ and $D_j$ to be balls just to make it easier to understand. Indeed, they can be any bounded Lipschitz domains with the maximum diameter $a$.
%\end{remark}

\subsection{Main results.}

Our starting point is the following Foldy-Lax approximation (or point-interaction approximation) of the electromagnetic fields generated by a cluster of small scaled inhomogeneties in the mesoscale regime derived in \cite{AM}.

\begin{theorem} \cite[Theorem 2.1]{AM}.\label{FL-approximation}
	Let $\Omega$ be a bounded domain with Lipschitz boundary. 
	Under the assumptions (\uppercase\expandafter{\romannumeral1}, \uppercase\expandafter{\romannumeral2},\uppercase\expandafter{\romannumeral3}) above, we have the following expansion:

	\begin{equation}\label{dis-far}
		E^{\infty}(\hat{x})=\sum_{m=1}^M\left(\frac{k^2}{4\pi}e^{-i k \hat{x}\cdot z_m}\hat{x}\times({\R}_m^{\epsilon_r}\times\hat{x})+\frac{ik}{4\pi}e^{-i k \hat{x}\cdot z_m}\hat{x}\times{\Q}_m^{\mu_r}\right)+\Oh(k c_{\infty}(\frac{1}{c_{\infty}^{\epsilon-}}+\frac{1}{c_{\infty}^{\mu-}})c_r^{-7}),
	\end{equation}
	with $(\R_m^{\epsilon_r}, \Q_m^{\mu_r})$ fulfilling the following invertible linear algebraic system
	\begin{align}
		[P_{D_m}^{\epsilon_r}]^{-1}\R_m^{\epsilon_r}&=\sum_{j=1\atop j\neq m}^M \left[\Pi_k(z_m, z_j)\R_j^{\epsilon_r}+i k \nabla\Phi_k(z_m, z_j)\times\Q_j^{\mu_r}\right]+E^{in}(z_m),\notag\\
		[P_{D_m}^{\mu_r}]^{-1}\Q_m^{\mu_r}&=\sum_{j=1\atop j\neq m}^M \left[\Pi_k(z_m, z_j)\Q_j^{\mu_r}-i k \nabla\Phi_k(z_m, z_j)\times\R_j^{\epsilon_r}\right]+H^{in}(z_m)\notag
	\end{align}
	for $m=1,\cdots, M$. $c_{\infty}^{\epsilon_r-}, c_{\infty}^{\mu_r-}$ are the positive constants introduced in \eqref{coer-contr} 
\end{theorem}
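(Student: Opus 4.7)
The plan is to derive the Foldy--Lax representation from the volume integral (Lippmann--Schwinger) formulation of \eqref{model-m}, combined with a scaling analysis on each nanoparticle and careful asymptotic tracking of the mutual interactions through the dyadic Green's tensor.

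First I would convert \eqref{model-m} into a coupled vector Lippmann--Schwinger system posed on $D$, schematically
\begin{equation*}
E(x) - k^2 \int_D \Pi_k(x,y)\,\mathcal{C}_{\epsilon_r}(y) E(y)\,dy + ik \int_D \nabla_x \Phi_k(x,y) \times \bigl(\mathcal{C}_{\mu_r}(y) H(y)\bigr)\,dy = E^{in}(x),
\end{equation*}
with a symmetric equation for $H$. Here $\Phi_k$ is the outgoing Helmholtz fundamental solution and $\Pi_k$ the associated dyadic Green's tensor. The moderate-contrast assumption \eqref{coer-contr} grants a well-defined inverse of this LS operator in a suitable functional setting (an invertibility that is precisely the one revisited in H\"older spaces later in the present paper).

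Next I would exploit the scaling $D_m = a\mathbf{B}_m + \mathbf{z}_m$ and split the volume integrals into the self-contribution from $D_m$ and the mutual contributions from $\bigcup_{j\neq m} D_j$. After the change of variables $\xi = (x-\mathbf{z}_m)/a$, the self-term $k^2 \int_{D_m}\Pi_k(x,y)\,\mathcal{C}_{\epsilon_r}(y) E(y)\,dy$ reduces at order $a^3$ to a Newton-type potential on $\mathbf{B}_m$ whose inversion against the constant mode defines the electric polarization tensor $P_{D_m}^{\epsilon_r}$; the magnetic side yields $P_{D_m}^{\mu_r}$ analogously. The unknown vector moments $\R_m^{\epsilon_r}$ and $\Q_m^{\mu_r}$ are precisely these leading constant modes integrated against $\mathcal{C}_{\epsilon_r}$ and $\mathcal{C}_{\mu_r}$ over $D_m$.

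For the mutual contributions, since $|\mathbf{z}_j - \mathbf{z}_m| \geq \delta = c_r a$, the kernels $\Pi_k(x,y)$ and $\nabla_x \Phi_k(x,y)$ with $x \in D_m$, $y \in D_j$ may be Taylor-expanded around the pair of centres $(\mathbf{z}_m, \mathbf{z}_j)$, with remainders controlled by powers of $a/\delta = c_r^{-1}$. Summing the principal terms delivers exactly the Foldy--Lax algebraic system in the statement. The far-field formula \eqref{dis-far} then follows by inserting the asymptotics $\Pi_k(x,y) \sim \frac{e^{ik|x|}}{4\pi|x|} e^{-ik\hat x \cdot y}(I - \hat x \otimes \hat x)$ and $\nabla_x \Phi_k(x,y) \sim ik\hat x \,\frac{e^{ik|x|}}{4\pi|x|} e^{-ik\hat x\cdot y}$ as $|x|\to\infty$; the transverse projector $I-\hat x\otimes\hat x$ is what produces the structure $\hat x\times(\R_m^{\epsilon_r}\times\hat x)$.

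The main obstacle is twofold. First, one must establish uniform invertibility of the $6M\times 6M$ block linear system: the diagonal blocks $[P_{D_m}^{\epsilon_r}]^{-1}, [P_{D_m}^{\mu_r}]^{-1}$ are bounded from below by the coercivity constants $c_\infty^{\epsilon-}, c_\infty^{\mu-}$, whereas the off-diagonal entries are of size $\delta^{-3}\cdot a^3 = c_r^{-3}$, so that the lower bound on $c_r$ in assumption (III) is exactly what forces strict diagonal dominance uniformly in $M = \Oh(\delta^{-3})$. Second, the sharp remainder rate $c_r^{-7}$ in \eqref{dis-far} must be tracked by bookkeeping the quadratic Taylor residue $(a/\delta)^2$ in the kernel expansions, a $c_r^{-3}$ loss coming from summation over pairs $(m,j)$, and a further $c_r^{-2}$ factor from inverting the coupled Foldy--Lax matrix. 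Keeping all these estimates uniform in the number of particles $M$ is the technical heart of the argument carried out in \cite{AM}.
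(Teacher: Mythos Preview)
The paper does not prove this statement at all: Theorem~\ref{FL-approximation} is quoted verbatim from \cite[Theorem~2.1]{AM} and used as the starting point for the effective-medium analysis. When it is restated as Proposition~\ref{lem-dis}, the authors explicitly refer the reader to \cite[Propositions~5.1 and~5.2]{AM} for the derivation of the algebraic system and the a~priori bounds. So there is no ``paper's own proof'' to compare your proposal against.

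That said, your sketch is a reasonable summary of the strategy one expects in \cite{AM}: Lippmann--Schwinger reformulation, splitting into self- and mutual contributions, scaling to extract the polarization tensors, Taylor expansion of the off-diagonal kernels, and far-field asymptotics. Two small points are worth flagging. First, your Lippmann--Schwinger equation carries an extra factor of $k^{2}$ in front of $\Pi_{k}$; the dyadic $\Pi_{k}=k^{2}\Phi_{k}I+\nabla\nabla\Phi_{k}$ already contains it (compare with \eqref{ls10} in the present paper). Second, your accounting of the $c_r^{-7}$ remainder as a product $(a/\delta)^{2}\cdot c_r^{-3}\cdot c_r^{-2}$ is heuristic; the actual bookkeeping in \cite{AM} is more delicate, involving separate estimates for the near- and far-field pieces of $\Pi_k$ and a discrete Calder\'on--Zygmund-type summation over the cluster, and your description of a ``$c_r^{-2}$ factor from inverting the Foldy--Lax matrix'' does not match how the invertibility estimate \eqref{dis-inver0} actually enters. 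These are not fatal in a sketch, but they would need tightening if you were to reproduce the argument in full.
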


Here, $[P_{D_m}^{\epsilon_r}]$ and $[P_{D_m}^{\mu_r}]$ are respectively the electric polarization tensor and the magnetic polarization tensor associated with \eqref{model-m}, which satisfy the scaling property
    \begin{equation}\label{polar}
    [P_{D_m}^{\epsilon_r}]=a^3[P_{\mathbf{B}_m}^{\epsilon_r}],\quad [P_{D_m}^{\mu_r}]=a^3[P_{\mathbf{B}_m}^{\mu_r}],
    \end{equation}
    as well as the boundedness property
    \begin{equation}\label{eigen}
    \lambda^-_{\epsilon_r}a^3|V|^2\leq[P_{D_m}^{\epsilon_r}]V\cdot V\leq \lambda^+_{\epsilon_r}a^3|V|^2, \ \lambda^-_{\mu_r}a^3|V|^2\leq[P_{D_m}^{\mu_r}]V\cdot V\leq \lambda^+_{\mu_r}a^3|V|^2,
    \end{equation}
    with positive constants $\lambda^-_{\epsilon_r}, \lambda^-_{\mu_r}$, for any vector $V$.
    Then we denote
    \begin{equation}\label{lamda+}
    \lambda^-_{(\epsilon_r, \mu_r)}:=\min\{\lambda^-_{\epsilon_r}, \lambda^-_{\mu_r} \},\quad \mbox{and} \ \lambda^+_{(\epsilon_r, \mu_r)}:=\max\{\lambda^+_{\epsilon_r}, \lambda^+_{\mu_r} \}.
    \end{equation}
    
\uppercase\expandafter{\romannumeral4}. \emph{Assumption on the shape of $D_m$'s.}\label{\romannumeral4} We assume that the $B_m$'s have the same shapes and we denote
    \begin{equation}\label{polar2}
    	[P_{\mathbf{B}_m}^{\epsilon_r}]:=[P_0^{\epsilon_r}],\quad [P_{\mathbf{B}_m}^{\mu_r}]:=[P_0^{\mu_r}].
    \end{equation}
Actually, we just need the shapes to have the same polarization tensors. 
%{\bl Throughout this paper, we assume that the polarization tensors $[P_0^{\epsilon_r}]$ and $[P_0^{\mu_r}]$ are constant matrices.}
\bigskip

Now, we are in a position to state the main contribution of this work.
\begin{theorem}\label{main-1}
	Let $\Omega$ be a bounded domain with $C^{1,\alpha}$ regularity for $\alpha\in(0,1)$. 
	{\bl Suppose that the wave number $k$ fulfills 
	\begin{equation}\notag
		g(\alpha, k)c_{\mathrm{reg}}(\alpha, \Omega)c_r^{-3}c_\infty^{(\epsilon_r, \mu_r)}<1,
\end{equation}
where 
\begin{equation}\notag
	g(\alpha, k):=k^{3+\alpha}+k^3+k^2+k+1,
\end{equation}
and $c_\infty^{(\epsilon_r, \mu_r)}$ is defined by 
\begin{equation}\notag
	c_\infty^{(\epsilon_r, \mu_r)}:=\max\{\lVert[P_0^{\epsilon_r}]\rVert_{L^{\infty}(\Omega)}, \lVert[P_0^{\mu_r}]\rVert_{L^{\infty}(\Omega)}\},
\end{equation}
and $c_{\mathrm{reg}}(\alpha, \Omega)$ is a positive constant depending on $\alpha$ and $\Omega$, see \eqref{c-reg}. Then} under the assumptions (\uppercase\expandafter{\romannumeral1}, \uppercase\expandafter{\romannumeral2},\uppercase\expandafter{\romannumeral3}, \uppercase\expandafter{\romannumeral4}) above, we have the following expansion:
	\begin{equation}\label{main-diff}
	E^\infty(\hat{x})- E_{\mathrm{eff}}^{\infty}(\hat{x})=\Oh\left(k\lambda^+_{(\epsilon_r,\mu_r)}(c_0^{\epsilon_r}+c_0^{\mu_r})(kc_{P_0}^{\epsilon_r}c_0^{\epsilon_r,-1}+c_{P_0}^{\mu_r}c_0^{\mu_r,-1})c_r^{-6}
%		+k c_{\infty}(\frac{1}{c_{\infty}^{\epsilon-}}+\frac{1}{c_{\infty}^{\mu-}})c_r^{-7}
		\right),
%		\Oh(t_1 a^\alpha+t_2a+t_3a^{\alpha-}+t_4c_r^{-3/2}+t_5a^{\alpha+2}+t_6a^{\alpha+1}+t_7a^2+c_r^{-7}),
	\end{equation}
{\bl	where $\lambda^+_{(\epsilon_r,\mu_r)},  c_{P_0}^{\epsilon_r}, c_{P_0}^{\mu_r}, c_0^{\epsilon_r,-1}, c_0^{\mu_r,-1}, c_0^{\epsilon_r}\mbox{ and }c_0^{\mu_r}$ 
	are positive constants depending on the a-prior bounds for $\epsilon_r$, $\mu_r$ and $\Omega$ and they are given by \eqref{lamda+}, \eqref{add2} and \eqref{notation}, respectively. }
	Here, $E^{\infty}_{\mathrm{eff}}$ is the far-field pattern corresponding to the following electromagnetic scattering problem for the effective medium as $a\to 0$:
	\begin{equation}\label{model-equi}
	\begin{cases}
	\mathrm{curl}U^{\mathring{\epsilon}_r}-i k \mathring{\mu}_r V^{\mathring{\mu}_r}=0,\quad \mathrm{curl} V^{\mathring{\mu}_r}+i k\mathring{\epsilon_r} U^{\mathring{\epsilon}_r}=0\quad\mbox{in} \ D,\\
	\mathrm{curl} U^{\mathring{\epsilon}_r}-i k V^{\mathring{\mu}_r}=0,\quad\mathrm{curl} V^{\mathring{\mu}_r}+i k U^{\mathring{\epsilon}_r}=0\quad\mbox{in}\ \mathbb{R}^3\backslash D,\\
	U^{\mathring{\epsilon}_r}=U_s^{\mathring{\epsilon}_r}+U^{\mathring{\epsilon}_r}_{in},\\
	\nu\times U^{\mathring{\epsilon}_r}|_{+}=\nu\times U^{\mathring{\epsilon}_r}|_{-}, \quad \nu\times V^{\mathring{\mu}_r}|_{+}=\nu\times V^{\mathring{\mu}_r}|_{-},\quad\mbox{on} \ \partial D,\\
	V^{\mathring{\mu}_r}_s\times\frac{x}{|x|}-U^{\mathring{\epsilon}_r}_s=\Oh({\frac{1}{|x|^2}}),\quad \mbox{as} \ |x|\rightarrow\infty,
	\end{cases}
	\end{equation}
%	and $E^{\infty}_a({\hat{x}})$ is the corresponding far-field of the form
The electric permittivity and magnetic permeability of the effective medium are given by
	\begin{equation}\notag
		\mathring{\epsilon}_r=I+c_r^{-3}[P^{\epsilon_r}_0], \quad
%		=I+c_r^{-3}(I-c_r^{-3}[P_0^{\epsilon_r}]K_B^0)^{-1}[P_0^{\epsilon_r}]
		\mathring{\mu}_r=I+c_r^{-3}[P^{\mu_r}_0].
%		=I+c_r^{-3}(I-c_r^{-3}[P_0^{\mu_r}]K_B^0)^{-1}[P_0^{\mu_r}].
	\end{equation}
\end{theorem}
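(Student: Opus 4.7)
My strategy is to recognize the Foldy--Lax algebraic system of Theorem \ref{FL-approximation} as a Nystr\"om-type quadrature of a continuous Lippmann--Schwinger system whose solution is precisely the effective electromagnetic field, and then to control the quadrature error via an invertibility result for the Lippmann--Schwinger operator in H\"older spaces.

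First, I normalize the unknowns by setting $\hat{E}_m := a^{-3}[P_0^{\epsilon_r}]^{-1}\R_m^{\epsilon_r}$ and $\hat{H}_m := a^{-3}[P_0^{\mu_r}]^{-1}\Q_m^{\mu_r}$. Using the scaling property \eqref{polar} together with the key identity $a^3=c_r^{-3}\delta^3=c_r^{-3}|\Omega_m|$, the system of Theorem \ref{FL-approximation} rewrites as
\begin{equation*}
\hat{E}_m = c_r^{-3}\sum_{j\neq m}|\Omega_j|\bigl[\Pi_k(z_m,z_j)[P_0^{\epsilon_r}]\hat{E}_j + ik\,\nabla\Phi_k(z_m,z_j)\times[P_0^{\mu_r}]\hat{H}_j\bigr] + E^{in}(z_m),
\end{equation*}
and symmetrically for $\hat{H}_m$. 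This is a Riemann sum on the partition $\{\Omega_j\}$ of the continuous integral system
\begin{equation*}
\hat{E}(x) = c_r^{-3}\int_\Omega\bigl[\Pi_k(x,y)[P_0^{\epsilon_r}]\hat{E}(y) + ik\,\nabla\Phi_k(x,y)\times[P_0^{\mu_r}]\hat{H}(y)\bigr]dy + E^{in}(x),
\end{equation*}
which is exactly the Lippmann--Schwinger formulation of \eqref{model-equi} with the announced contrasts $\mathring{\epsilon}_r-I=c_r^{-3}[P_0^{\epsilon_r}]$ and $\mathring{\mu}_r-I=c_r^{-3}[P_0^{\mu_r}]$. This reinterpretation pins down the right candidate effective medium.

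Second, I would prove invertibility of the above Lippmann--Schwinger system in $C^{0,\alpha}(\overline\Omega)^3\times C^{0,\alpha}(\overline\Omega)^3$. Exploiting the $C^{1,\alpha}$-smoothness of $\partial\Omega$, both the strongly singular Maxwell volume operator with kernel $\Pi_k$ and its weakly singular companion $\nabla\Phi_k$ map $C^{0,\alpha}(\overline\Omega)$ into itself with norms controlled by $c_{\mathrm{reg}}(\alpha,\Omega)\,g(\alpha,k)$, the dependence on $k$ being explicit through the expansion of the Helmholtz fundamental solution. The smallness hypothesis $g(\alpha,k)\,c_{\mathrm{reg}}(\alpha,\Omega)\,c_r^{-3}c_\infty^{(\epsilon_r,\mu_r)}<1$ then makes the Neumann series converge and furnishes the H\"older regularity of $(\hat{E},\hat{H})$ up to $\partial\Omega$ announced in the abstract.

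Third---and this is the most delicate step---I estimate the discrepancy between the discrete Foldy--Lax vector $(\hat{E}_m,\hat{H}_m)$ and the pointwise values $(\hat{E}(z_m),\hat{H}(z_m))$ of the Lippmann--Schwinger solution. Writing the difference as a quadrature error for the integrand, the principal part splits into the self-cube contribution over $\Omega_m$, which is handled by the strong-singularity cancellation of $\Pi_k$ and contributes $\Oh(a^3)=\Oh(c_r^{-3}\delta^3)$, and the off-diagonal contribution: for each $j\neq m$ the H\"older continuity of $\hat{E}$ and of $\Pi_k(z_m,\cdot)$ on $\Omega_j$ produces an error of size $|\Omega_j|\,\delta^\alpha\,|z_m-z_j|^{-4}$, which when summed against the cube lattice remains bounded after using $C^{1,\alpha}$-smoothness of $\partial\Omega$ (the thin boundary layer of Remark \ref{layer} is absorbed in the constants). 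Combined with the $c_r^{-3}$ prefactor in the rewritten system and closed via the H\"older invertibility of Step~2, this yields a uniform bound of order $c_r^{-6}$ on $|\hat{E}_m-\hat{E}(z_m)|+|\hat{H}_m-\hat{H}(z_m)|$.

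Finally, I insert this comparison into the far-field expansion \eqref{dis-far}, identifying the Riemann sum with
\begin{equation*}
E_{\mathrm{eff}}^\infty(\hat{x})=\frac{c_r^{-3}}{4\pi}\int_\Omega e^{-ik\hat{x}\cdot y}\bigl(k^2\,\hat{x}\times([P_0^{\epsilon_r}]\hat{E}(y)\times\hat{x})+ ik\,\hat{x}\times[P_0^{\mu_r}]\hat{H}(y)\bigr)dy,
\end{equation*}
which is the classical integral representation of the far field of the effective transmission problem \eqref{model-equi}. The $c_r^{-6}$ pointwise error propagates to the far field at the same rate, producing \eqref{main-diff} with the stated constants. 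The main obstacle is the pairing of Steps 2 and 3: establishing H\"older-space invertibility of the Lippmann--Schwinger operator with a kernel of principal singularity $|x-y|^{-3}$, and then making the quadrature estimate sharp enough to deliver $c_r^{-6}$ rather than the cheaper $c_r^{-5}$. It is exactly in this step that $c_{\mathrm{reg}}(\alpha,\Omega)$, the $C^{1,\alpha}$ regularity of $\partial\Omega$, and the smallness condition on $g(\alpha,k)\,c_r^{-3}\,c_\infty^{(\epsilon_r,\mu_r)}$ enter.
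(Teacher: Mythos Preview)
Your overall architecture—recognizing the Foldy--Lax system as a discretization of the Lippmann--Schwinger system, establishing H\"older invertibility, and then running a quadrature/consistency argument—is exactly the paper's plan. But Step~3 contains a genuine gap.

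The claim that the self-cube contribution is $\Oh(a^3)$ via ``strong-singularity cancellation of $\Pi_k$'' is wrong. The principal part $\Pi_0=\nabla\nabla\Phi_0$ is homogeneous of degree $-3$, so $\int_{\Omega_m}\Pi_0(\cdot,y)\,dy$ is \emph{scale-invariant}: it is $\Oh(1)$, not $\Oh(\delta^3)$, and there is no cancellation on a cube. With the $c_r^{-3}$ prefactor this term is $\Oh(c_r^{-3})$, a constant that does not vanish as $a\to 0$. Hence $|\hat E_m-\hat E(z_m)|$ is at best $\Oh(c_r^{-3})$, not the $\Oh(c_r^{-6})$ you assert (and the pointwise evaluation of $\int_{\Omega_m}\Pi_0(z_m,y)\hat E(y)\,dy$ is itself delicate, since the kernel is not locally integrable). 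The paper confronts this head-on: it averages the Lippmann--Schwinger equation over the largest ball $S_m\subset\Omega_m$ (to exploit the mean-value property of the harmonic kernel off-diagonal and Calder\'on--Zygmund $L^p$ bounds on-diagonal), and introduces an explicit scale-invariant operator $K^{\epsilon_r}$ encoding the averaged self-cube integral. The comparison is then between $U_m$ and $(I-K^{\epsilon_r})\frac{1}{|S_m|}\int_{S_m}U^{\mathring\epsilon_r}$; the matching condition $[C^{\mathrm T}_{\epsilon_r}](I-K^{\epsilon_r})^{-1}=[P_0^{\epsilon_r}]$ makes this line up with the discrete system, and only afterwards does one observe $[C^{\mathrm T}_{\epsilon_r}]=[P_0^{\epsilon_r}]+\Oh(c_r^{-3})$, allowing the effective coefficients to be written as $I+c_r^{-3}[P_0^{\epsilon_r}]$.

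Two further differences worth noting. First, the H\"older regularity (your Step~2) is not obtained by bounding $\Pi_k$ directly as a map on $C^{0,\alpha}$; instead the paper integrates $\nabla\mathrm{div}\,\S_\Omega^{k}$ by parts, introduces the normal traces $e^*=E\cdot\nu|_{\partial\Omega}$, $h^*=H\cdot\nu|_{\partial\Omega}$ as extra unknowns, and works with a coupled surface--volume $4\times 4$ system whose entries are all manifestly bounded on $C^{0,\alpha}$. Second, the consistency estimate is closed not through the continuous H\"older inverse but through the $\ell_2$-invertibility of the \emph{discrete} algebraic system supplied by Theorem~\ref{FL-approximation}, yielding $\bigl(\sum_m|(I-K^{\epsilon_r})\langle U^{\mathring\epsilon_r}\rangle_{S_m}-U_m|^2\bigr)^{1/2}=\Oh(c_r^{-9/2}a^{-3/2})$, which after Cauchy--Schwarz in the far-field sum produces the $\Oh(c_r^{-6})$ of \eqref{main-diff}.
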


{\bl
\begin{remark}
	The assumption in (\uppercase\expandafter{\romannumeral2}. (1)) can be changed into ``$\epsilon_r$ and $\mu_r$ are real positive definite matrices with arbitrary $\Re \epsilon_r$ ". In particular, $\epsilon_r$ and $\mu_r$ do not need to be symmetric. In this case, for $m=,1,2, \cdots, M$, we define
	\begin{equation}\notag
		\mathcal{A}_f^m:=\frac{1}{D_m}\int_{D_m} f\,dv,
%		\quad
%		\mathcal{A}f(x):=\sum_{m=1}^{M}\mathcal{A}_f^m\chi_{D_m}(x),
	\end{equation}
	for any $f\in L^2(D_m)$. Then if either $\epsilon_r$ or $\mu_r$ is not symmetric, with $(\mathcal{A}^m_{\epsilon_r})_{m=1}^M$ and $(\mathcal{A}^m_{\mu_r})_{m=1}^M$ standing for the respective average over each $D_m$, the Foldy-Lax approximation for the far-field keeps the same form as \eqref{dis-far}. But the invertible linear algebraic system with repsect to $(\R_m^{\epsilon_r}, \Q_m^{\mu_r})$ now becomes
	\begin{align}
	[\mathcal{T}_{D_m}^{\mathcal{A}^m_{{\epsilon_r}^{\mathrm{T}}}}]^{-1}\R_m^{\epsilon_r}&=\sum_{j=1\atop j\neq m}^M \left[\Pi_k(z_m, z_j)\R_j^{\epsilon_r}+i k \nabla\Phi_k(z_m, z_j)\times\Q_j^{\mu_r}\right]+E^{in}(z_m),\notag\\
	[\mathcal{T}_{D_m}^{\mathcal{A}^m_{{\mu_r}^{\mathrm{T}}}}]^{-1}\Q_m^{\mu_r}&=\sum_{j=1\atop j\neq m}^M \left[\Pi_k(z_m, z_j)\Q_j^{\mu_r}-i k \nabla\Phi_k(z_m, z_j)\times\R_j^{\epsilon_r}\right]+H^{in}(z_m)\notag,
	\end{align}
	for $m=1,2, \cdots, M$, where $[\mathcal{T}_{D_m}^{\mathcal{A}^m_{{\epsilon_r}^{\mathrm{T}}}}]^{-1}$ and $[\mathcal{T}_{D_m}^{\mathcal{A}^m_{{\mu_r}^{\mathrm{T}}}}]^{-1}$ are given by 
	\begin{equation}\notag
		[\mathcal{T}_{D_m}^{\mathcal{A}^m_{{\epsilon_r}^{\mathrm{T}}}}]^{-1}:=\mathcal{A}^m_{\C_{{\epsilon_r}^{\mathrm{T}}}}[P_{D_m}^{\mathcal{A}^m_{{\epsilon_r}^{\mathrm{T}}}}]^{-1}(\mathcal{A}^m_{\C_{\epsilon_r}})^{-1},
	\end{equation}
	and
	\begin{equation}\notag
		[\mathcal{T}_{D_m}^{\mathcal{A}^m_{{\mu_r}^{\mathrm{T}}}}]^{-1}:=\mathcal{A}^m_{\C_{{\mu_r}^{\mathrm{T}}}}[P_{D_m}^{\mathcal{A}^m_{{\mu_r}^{\mathrm{T}}}}]^{-1}(\mathcal{A}^m_{\C_{\mu_r}})^{-1}.
	\end{equation}
	We refer \cite[Theorem 2.1]{AM} for more detailed discussions. Based on \eqref{dis-far}, we show that the corresponding effective medium under this circumstances still fulfills \eqref{model-equi}, with the new electric permittivity $\mathring{\epsilon_r}$ and $\mathring{\mu_r}$ being represented as
	\begin{equation}\notag
		\mathring{\epsilon_r}=I+c_r^{-3}[\mathcal{T}_{0}^{\mathcal{A}_{{\epsilon_r}^{\mathrm{T}}}}],\quad
		\mathring{\mu_r}=I+c_r^{-3}[\mathcal{T}_{0}^{\mathcal{A}_{{\mu_r}^{\mathrm{T}}}}],
	\end{equation}
	under the similar assumption as \uppercase\expandafter{\romannumeral4}.
	\end{remark}
}

\subsection{A brief description of the arguments}

Here, we briefly describe the whole steps and ideas of the proof of Theorem \ref{main-1}.

Recall the Foldy-Lax approximation given in \cite[Theorem 2.1]{AM}. By the variable substitution $U_m:=[P^{\epsilon_r}_{D_m}]^{-1}\R^{\epsilon_r}_m, V_m:=[P^{\mu_r}_{D_m}]^{-1}\Q^{\mu_r}_m$, we can rewrite the far-field expansion as 
\begin{align}\label{far-dis0}
E^{\infty}(\hat{x})=\sum_{m=1}^{M}\left(\frac{k^2}{4\pi}e^{-i k \hat{x}\cdot z_m}\hat{x}\times (a^3[P_0^{\epsilon_r}]U_m\times \hat{x})+\frac{i k}{4\pi}e^{-i k \hat{x}\cdot z_m}\hat{x}\times a^3[P_0^{\mu_r}]V_m\right)\notag\\
+\Oh(k c_\infty(\frac{1}{c_{\infty}^{\epsilon-}}+\frac{1}{c_{\infty}^{\mu-}})c_r^{-7}),
\end{align}
where $(U_m, V_m)_{m=1}^M$ are solutions to the invertible linear algebraic system 
\begin{align}\label{dislinear0}
&U_m-a^3\sum_{j=1\atop j\neq m}^M\left[ \Pi_k(z_m,z_j)[P_0^{\epsilon_r}]U_j+i k \nabla\Phi_k(z_m,z_j)\times([P^{\mu_r}_0]V_j)\right]=E^{in}(z_m),\notag\\
&V_m-a^3\sum_{j=1\atop j\neq m}^M\left[ \Pi_k(z_m,z_j)[P_0^{\mu_r}]V_j-i k \nabla\Phi_k(z_m,z_j)\times([P^{\epsilon_r}_0]U_j)\right]=H^{in}(z_m),
\end{align}
and $c_\infty$, $c_{\infty}^{\epsilon-}$ and $c_{\infty}^{\mu-}$ are defined by \eqref{bdd-contr} and \eqref{coer-contr}, respectively. 

Based on \eqref{dislinear0}, we introduce the Lippmann-Schwinger equation with respect to two constant $3\times3$-tensors $[C^{\mathrm{T}}_{\epsilon_r}]$ and $[C^{\mathrm{T}}_{\mu_r}]$ as
\begin{align}\label{ls10}
&U^{\mathring{\epsilon}_r}(x)-\int_{\Omega}\left(\Pi_k(x,z)c_r^{-3}[C^{\mathrm{T}}_{\epsilon_r}]U^{\mathring{\epsilon}_r}+i k \nabla\Phi_k(x,z)\times(c_r^{-3}[C^{\mathrm{T}}_{\mu_r}]V^{\mathring{\mu}_r})\right)\,dz =E^{in}(x,\theta),\notag\\
&V^{\mathring{\mu}_r}(x)-\int_{\Omega}\left(\Pi_k(x,z)c_r^{-3}[C^{\mathrm{T}}_{\mu_r}]V^{\mathring{\mu}_r}-i k \nabla\Phi_k(x,z)\times(c_r^{-3}[C^{\mathrm{T}}_{\epsilon_r}]U^{\mathring{\epsilon}_r})\right)\,dz =H^{in}(x,\theta),
\end{align}
which models the unique solution to the electromagnetic scattering problem of equivalent medium \eqref{model-equi}. More explanations for $[C^{\mathrm{T}}_{\epsilon_r}]$ and $[C^{\mathrm{T}}_{\mu_r}]$ will be presented later in Section \ref{sec-compare} by Definition \ref{def2}. The corresponding far-field possesses the form 
\begin{equation}\label{far-intro}
	E^{\infty}_{\mathrm{eff}}(\hat{x})=\int_{\Omega}\left( \frac{k^2}{4\pi}e^{-ik\hat{x}\cdot z}\hat{x}\times (c_r^{-3}[C^{\mathrm{T}}_{\epsilon_r}]U^{\mathring{\epsilon}_r}(z)\times\hat{x})+\frac{ik}{4\pi}e^{-ik\hat{x}\cdot z}\hat{x}\times c_r^{-3}[C^{\mathrm{T}}_{\mu_r}]V^{\mathring{\mu}_r}(z)\right)\,dz.
\end{equation}
Our main result is to derive the following error estimate
\begin{equation}\label{error-m}
	E^\infty(\hat{x}) - E_{\mathrm{eff}}^{\infty}(\hat{x})=\Oh\left(k\lambda^+_{(\epsilon_r,\mu_r)}(c_0^{\epsilon_r}+c_0^{\mu_r})(kc_{P_0}^{\epsilon_r}c_0^{\epsilon_r,-1}+c_{P_0}^{\mu_r}c_0^{\mu_r,-1})c_r^{-6}\right),\quad a\to 0.
\end{equation}
We show that $[C^{\mathrm{T}}_{\epsilon_r}]=[P_0^{\epsilon_r}] +O(c^{-3}_r)$ and $[C^{\mathrm{T}}_{\mu_r}]=[P_0^{\mu_r}] +O(c^{-3}_r)$. Therefore, in terms of \eqref{error-m}, $E^{\infty}_{\mathrm{eff}}(\hat{x})$ in \eqref{far-intro} reduces, keeping the same notation, to
\begin{equation}\label{add1}
E^{\infty}_{\mathrm{eff}}(\hat{x})=\int_{\Omega}\left(\frac{k^2}{4\pi}e^{-i k\hat{x}\cdot z}\hat{x}\times (c_r^{-3}[P_0^{\epsilon_r}] U^{\mathring{\epsilon}_r}\times\hat{x})+\frac{i k}{4\pi}e^{-i k\hat{x}\cdot z}\hat{x}\times c_r^{-3}[P_0^{\mu_r}]V^{\mathring{\mu}_r}\right)\,dz.
\end{equation}

In order to derive \eqref{error-m}, the first step is to investigate the regularity of the solution $(U^{\mathring{\epsilon_r}}, V^{\mathring{\mu_r}})$ of (\ref{ls10}). We prove that if the bounded domain $\Omega$ is $C^{1,\alpha}$ regular, then {\bl $(U^{\mathring{\epsilon_r}}, V^{\mathring{\mu_r}})\in C^{0,\alpha}({\overline{\Omega}})\times C^{0,\alpha}({\overline{\Omega}})$}, for $\alpha\in (0,1)$, {\bl provided that 
	\begin{equation}\notag
	g(\alpha, k)c_{\mathrm{reg}}(\alpha, \Omega)c_r^{-3}c_\infty^{(\epsilon_r, \mu_r)}<1,
	\end{equation}
	where $g(\alpha, k):=k^{3+\alpha}+k^3+k^2+k+1$, and $c_{\mathrm{reg}}(\alpha, \Omega)$ is a positive constant depending on $\alpha$ and $\Omega$. $c_\infty^{(\epsilon_r, \mu_r)}$ is a positive constant related to $[P_0^{\epsilon_r}]$ and $[P_0^{\mu_r}]$}.
\bigskip

Next, with the help of counting lemma and the above regularity property, we derive the most important estimate  of the difference between the solutions $(U_m, V_m)$ to the linear algebraic system and the solutions $(U^{\mathring{\epsilon_r}}, V^{\mathring{\mu_r}})$ of the Lippmann-Schwinger equation. Precisely, we estimate the $l_2$-norm of the vectors $
	(I-K^{\epsilon_r})\frac{1}{|S_m|}\int_{S_m}U^{\mathring{\epsilon}}(x)\,dx-U_m,
$
 and $
	(I-K^{\mu_r})\frac{1}{|S_m|}\int_{S_m}V^{\mathring{\epsilon}}(x)\,dx-V_m,
$, for $m=1,\cdots, M$,  as
\begin{equation}\notag
	\left(\sum_{m=1}^M|(I-K^{\epsilon_r})\frac{1}{|S_m|}\int_{S_m}U^{\mathring{\epsilon}}(x)\,dx-U_m|^2\right)^{1/2}=\Oh\left(\lambda^+_{(\epsilon_r,\mu_r)}c_0^{\epsilon_r,-1}(c_0^{\epsilon_r}+c_0^{\mu_r})c_r^{-\frac{9}{2}}a^{-\frac{3}{2}}\right),
\end{equation}
and 
\begin{equation}\notag
	\left(\sum_{m=1}^M|(I-K^{\mu_r})\frac{1}{|S_m|}\int_{S_m}V^{\mathring{\epsilon}}(x)\,dx-V_m|^2\right)^{1/2}=\Oh\left(\lambda^+_{(\epsilon_r,\mu_r)}c_0^{\mu_r,-1}(c_0^{\epsilon_r}+c_0^{\mu_r})c_r^{-\frac{9}{2}}a^{-\frac{3}{2}}\right),
\end{equation}
where $K^{\epsilon_r}$ and $K^{\mu_r}$ are bounded linear operators associated with the constant tensors $[C_{\epsilon_r}^{\mathrm{T}}]$ and $[C_{\mu_r}^{\mathrm{T}}]$ respectively; see Definition \ref{def2} in Section \ref{sec-compare} for more detailed discussions. 
To derive these estimates, with tedious but needed computations, we systematically utilize the properties of the Newtonian operator, as Cald\'{e}ron-Zygmund inequality, and the invertibility property of the algebraic system in Theorem \ref{FL-approximation}. Based on these estimates, we finally derive the error estimate for $E^\infty-E^\infty_{\mathrm{eff}}$.
\bigskip

The rest of this  paper is organized as follows. In Section \ref{sec-dis}, we first recall the Foldy-Lax approximation of the electromagnetic fields introduced in \cite{AM} for the discrete case with slightly different representations, which is more appropriate for our later comparison with the continuous model. In addition, we derive the $C^{0, \alpha}$-regularity of the solution $(U^{\mathring{\epsilon_r}}, V^{\mathring{\mu_r}})$ appearing in the, limiting, far-field \eqref{add1}. Section \ref{sec-compare} is devoted to prove our main result by investigating the equivalent medium as $a\rightarrow0$ and evaluate the difference between the discrete form of the far-field $E^{\infty}(\hat{x})$ and the corresponding asymptotic form $E^\infty_{\mathrm{eff}}(\hat{x})$, which shows the error estimate of $E^\infty_{\mathrm{eff}}(\hat{x})-E^{\infty}(\hat{x})$. Finally, in Section \ref{sec-prop}, we give the proof of one of the  essential propositions used in Section \ref{sec-compare}.

\section{Lippmann-Schwinger equation and the $C^{0,\alpha}$ regularity to its solution}\label{sec-dis}

\subsection{Mesoscale Approximation for the Electromagnetic Fields: Discrete form}

In this subsection, based on \cite{AM}, we recall the discrete form of the Foldy-Lax Approximation as well as the corresponding linear algebraic system as the following proposition.
%The main ideas are quite similar to those in \cite{AM}, but for our later comparision in Section \ref{sec-compare}, we express the discrete form in a slight different way. 
%The well-posedness and some important priori estimates can be found in \cite[Section 4]{AM}. And we directly investigate the Approximation.

\begin{proposition}\cite[Theorem 2.1]{AM}\label{lem-dis}
	Under the assumptions $(\uppercase\expandafter{\romannumeral1}, \uppercase\expandafter{\romannumeral2}, \uppercase\expandafter{\romannumeral3})$, the mesoscale electromagnetic scattering problem \eqref{model-m} has one and only one solution and the corresponding far-field fulfills the following expansion:
	\begin{equation}\label{dis-far0}
	E^{\infty}(\hat{x})=\sum_{m=1}^{M}\left(\frac{k^2}{4\pi}e^{-i k \hat{x}\cdot z_m}\hat{x}\times (\mathcal{R}^{\epsilon_r}_m\times \hat{x})+\frac{i k}{4\pi}e^{-i k \hat{x}\cdot z_m}\hat{x}\times\mathcal{Q}^{\mu_r}_m\right)+\Oh(k c_\infty(\frac{1}{c_{\infty}^{\epsilon-}}+\frac{1}{c_{\infty}^{\mu-}})c_r^{-7}).
	\end{equation}
%	where  
%	\begin{equation}\notag
%	c_{\mathrm{max}}^{E, H}=kc_\infty\left(\frac{1}{c_{\infty}^\epsilon-}+\frac{1}{c_{\infty}^{\mu-}}\right),
%	\end{equation}
where $(\R^{\epsilon_r}_m, \Q^{\mu_r}_m)_{m=1}^M$ are the solutions to the following invertible linear algebraic system 
	\begin{align}\label{dis-linear0}
	[P_{D_m}^{\epsilon_r}]^{-1}\R_m^{\epsilon_r}&=\sum_{j=1\atop j\neq m}^M[\Pi_k(z_m,z_j)\R^{\epsilon_r}_j+ i k \nabla \Phi_k(z_m,z_j)\times\Q^{\mu_r}_j]+E^{in} (z_m),\notag\\
	[P_{D_m}^{\mu_r}]^{-1}\Q_m^{\mu_r}&=\sum_{j=1\atop j\neq m}^M[\Pi_k(z_m,z_j)\Q^{\mu_r}_j- i k \nabla \Phi_k(z_m,z_j)\times\R^{\epsilon_r}_j]+H^{in}(z_m).  	  	
	\end{align}
	Furthermore, $(\R^{\epsilon_r}_m, \Q^{\mu_r}_m)_{m=1}^M$ satisfy the estimates for $k>1$ as,
	\begin{align}\label{dis-inver0}
&\left(\sum_{m=1}^M |{\R}_m^{\epsilon_r}|^2\right)^\frac{1}{2}\leq \frac{9\lambda^+_{(\epsilon_r,\mu_r)}a^3}{8}\left(\frac{1}{3} \left( \sum_{m=1}^M |H^{in}(z_m)|^2\right)^\frac{1}{2}+\left( \sum_{m=1}^M|E^{in}(z_m)|^2\right)^{\frac{1}{2}}\right),\notag\\
		&\left(\sum_{m=1}^M |{\Q}_m^{\mu_r}|^2\right)^\frac{1}{2}\leq \frac{9\lambda^+_{(\epsilon_r,\mu_r)}a^3}{8}\left( \left( \sum_{m=1}^M |H^{in}(z_m)|^2\right)^\frac{1}{2}+\frac{1}{3}\left( \sum_{m=1}^M|E^{in}(z_m)|^2\right)^{\frac{1}{2}}\right),
	\end{align}
	provided $c_r=\delta/a\geq 3k\lambda^+_{(\epsilon_r,\mu_r)}$.
\end{proposition}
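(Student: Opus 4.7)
The statement is a direct restatement of the main meso-scale approximation theorem of Ammari and Sini. My plan is to sketch the proof following the Lippmann--Schwinger (LS) volume integral route, separating the self-interaction (captured by the polarization tensors) from the mutual interactions (captured by point evaluations of the Maxwell Green's dyadic). This is the same architecture used to derive the analogous result for the acoustic problem, now adapted to the coupled $(E,H)$ system and to anisotropic contrasts.

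First, I would reformulate \eqref{model-m} as the coupled LS system on $D$: with $\Pi_k$ the Maxwell dyadic Green's kernel and $\Phi_k$ the Helmholtz fundamental solution, $E$ and $H$ satisfy volume integral equations whose densities are $\mathcal{C}_{\epsilon_r} E$ and $\mathcal{C}_{\mu_r} H$. Splitting the integral over $D = \bigcup_m D_m$ and evaluating the resulting relations at a fiducial point (e.g.\ $z_m$) of each $D_m$, I would treat the self-term $D_m$ by invoking the exterior transmission problem defining the polarization tensors $[P^{\epsilon_r}_{D_m}]$ and $[P^{\mu_r}_{D_m}]$, with the scaling \eqref{polar}. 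For the cross-terms $j\neq m$, I would Taylor-expand $\Pi_k(z_m,\cdot)$ and $\nabla\Phi_k(z_m,\cdot)$ around $z_j$, replace $\int_{D_j}\mathcal C_{\epsilon_r} E$ and $\int_{D_j}\mathcal C_{\mu_r} H$ by the ``moments'' $\mathcal{R}^{\epsilon_r}_j$ and $\mathcal{Q}^{\mu_r}_j$, and collect the remainders. This yields \eqref{dis-linear0} up to a residual that one then inserts back into the Stratton--Chu representation of $E^s$ to obtain \eqref{dis-far0}.

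The invertibility and the $\ell^2$ bound \eqref{dis-inver0} I would obtain by a Neumann-series/contraction argument. After multiplying the $m$-th line by $[P^{\epsilon_r}_{D_m}]$ (resp.\ $[P^{\mu_r}_{D_m}]$), \eqref{dis-linear0} reads $(I-\mathcal A)(\mathcal R,\mathcal Q)^{\top}=(E^{in},H^{in})^{\top}$ with block kernels $[P^{\epsilon_r}_{D_m}]\Pi_k(z_m,z_j)$ and $ik[P^{\epsilon_r}_{D_m}]\nabla\Phi_k(z_m,z_j)\times\cdot$ for $j\neq m$. Using $|\Pi_k(z_m,z_j)|\lesssim |z_m-z_j|^{-3}+k^2|z_m-z_j|^{-1}$ and $|\nabla\Phi_k(z_m,z_j)|\lesssim |z_m-z_j|^{-2}+k|z_m-z_j|^{-1}$, together with the counting lemma $\sum_{j\neq m}|z_m-z_j|^{-p}\lesssim \delta^{-p}\sum_{n\geq 1}n^{2-p}$ for the periodic distribution of Assumption III, and the bound $\|[P^{\epsilon_r}_{D_m}]\|,\|[P^{\mu_r}_{D_m}]\|\leq \lambda^+_{(\epsilon_r,\mu_r)} a^{3}$ from \eqref{eigen}, I obtain $\|\mathcal A\|_{\ell^2\to\ell^2}\lesssim \lambda^+_{(\epsilon_r,\mu_r)} a^{3}\cdot \delta^{-3}\cdot k = k\lambda^+_{(\epsilon_r,\mu_r)} c_r^{-3}$. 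Each of the four blocks of $\mathcal A$ is bounded by $1/3$ as soon as $c_r\geq 3k\lambda^+_{(\epsilon_r,\mu_r)}$; a Schur complement / geometric series then produces the constant $9/8$ and yields \eqref{dis-inver0} with the explicit $1/3$ cross-coupling factor.

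The principal obstacle is the remainder control in the second step: the residuals must be uniform in the number of particles $M=O(\delta^{-3})$ and must retain an explicit dependence on $k$, $c_\infty$, $c_\infty^{\epsilon-}$, $c_\infty^{\mu-}$ and $c_r$. This requires (a) using the $\mathbb W^{1,\infty}$-smoothness of the contrasts to absorb the Taylor errors on $D_j$, (b) combining the $|z-\zeta|^{-3}$ singularity of $\Pi_k$ with the counting lemma over the cubic cells $\Omega_m$ of side $\delta$, and (c) using the coercivity lower bounds $c_\infty^{\epsilon-}, c_\infty^{\mu-}$ to invert the local LS problem on each $D_m$ with a controlled constant. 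Together these inputs yield the error $O(k c_\infty(1/c_\infty^{\epsilon-}+1/c_\infty^{\mu-}) c_r^{-7})$ appearing in \eqref{dis-far0}. The full details are already carried out in \cite{AM}, so in this paper I would simply refer to that work for the proof.
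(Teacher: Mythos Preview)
Your proposal is correct and aligns with the paper: the paper does not prove this proposition at all but simply cites \cite[Proposition 5.1 and Proposition 5.2]{AM} for \eqref{dis-linear0} and \eqref{dis-inver0}, noting only that the far-field form \eqref{dis-far0} is a slight repackaging of \cite[Theorem 2.1]{AM}. Your sketch of the Lippmann--Schwinger/Neumann-series architecture is a faithful outline of the argument in \cite{AM}, and your concluding sentence---to simply refer to \cite{AM}---is exactly what the paper does.
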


The proof of deriving \eqref{dis-linear0} and \eqref{dis-inver0} can be founded directly in \cite[Proposition 5.1 and Proposition 5.2]{AM}. The corresponding far-field representation here \eqref{dis-far0} is slightly different from the form presented in \cite[Theorem 2.1]{AM}.

Now, based on the scaling property \eqref{polar} as well as the constant assumption for the polarization tensors \eqref{polar2}, we rewrite the linear algebraic system \eqref{dis-linear0} for $(\R^{\epsilon_r}_m, \Q^{\mu_r}_m)_{m=1}^M$  as
\begin{align}\label{rewr1}
	&[P^{\epsilon_r}_{D_m}]^{-1}\R^{\epsilon_r}_m-a^3\sum_{j=1\atop j\neq m}^M\left[ \Pi_k(z_m,z_j)[P_0^{\epsilon_r}][P_{D_j}^{\epsilon_r}]^{-1}\R^{\epsilon_r}_j+i k \nabla\Phi_k(z_m,z_j)\times[P_0^{\mu_r}][P_{D_j}^{\mu_r}]^{-1}\Q^{\mu_r}_j\right]=E^{in}(z_m),\notag\\
	&[P^{\mu_r}_{D_m}]^{-1}\Q^{\mu_r}_m-a^3\sum_{j=1\atop j\neq m}^M\left[ \Pi_k(z_m,z_j)[P_0^{\mu_r}][P_{D_j}^{\mu_r}]^{-1}\Q^{\mu_r}_j-i k \nabla\Phi_k(z_m,z_j)\times[P_0^{\epsilon_r}][P_{D_j}^{\epsilon_r}]^{-1}\R^{\epsilon_r}_j\right]=H^{in}(z_m).
\end{align}
Denote
\begin{equation}\label{v-change}
	U_m:=[P^{\epsilon_r}_{D_m}]^{-1}\R^{\epsilon_r}_m,\quad V_m:=[P^{\mu_r}_{D_m}]^{-1}\Q^{\mu_r}_m.
\end{equation}
Then \eqref{rewr1} becomes
\begin{align}\label{dislinear}
&U_m-a^3\sum_{j=1\atop j\neq m}^M\left[ \Pi_k(z_m,z_j)[P_0^{\epsilon_r}]U_j+i k \nabla\Phi_k(z_m,z_j)\times([P^{\mu_r}_0]V_j)\right]=E^{in}(z_m),\notag\\
&V_m-a^3\sum_{j=1\atop j\neq m}^M\left[ \Pi_k(z_m,z_j)[P_0^{\mu_r}]V_j-i k \nabla\Phi_k(z_m,z_j)\times([P^{\epsilon_r}_0]U_j)\right]=H^{in}(z_m).
\end{align}

In the following discussions, when it comes to the discrete far-field approximation, we mainly use the representation \eqref{dis-far0} as well as \eqref{dislinear}.

%Next, we focus on the asymptotic behavior of the far-field approximation \eqref{dis-far0} for sufficiently small $a$ and systematically investigate the regularity of the solutions to corresponding equivalent medium scattering problem. 

\subsection{Lippmann-Schwinger equation and the $C^{0,\alpha}$ regularity of the associated solutions}\label{subsec-reg}

In this subsection, we first construct the Lippmann-Schwinger equation of the electromagnetic scattering problem for the equivalent medium and investigate the regularity of the solutions, which is of significant use in the proof of our main theorem.

{\bl Suppose $[C^{\mathrm{T}}_{\epsilon_r}]$ and $[C^{\mathrm{T}}_{\mu_r}]$ are two $3\times3$-tensors, which can be denoted by two positive definite real symmetric matrices.}
%, which can be denoted by two positive definite real symmetric matrices. 
Inspired by \eqref{dislinear}, let us consider the Lippmann-Schwinger equation with respect to $c_r^{-3}[C^{\mathrm{T}}_{\epsilon_r}]$ and $c_r^{-3}[C^{\mathrm{T}}_{\mu_r}]$ as 
\begin{align}\label{ls1}
	&U^{\mathring{\epsilon}_r}(x)-\int_{\Omega}\left(\Pi_k(x,z)c_r^{-3}[C^{\mathrm{T}}_{\epsilon_r}]U^{\mathring{\epsilon}_r}+i k \nabla\Phi_k(x,z)\times(c_r^{-3}[C^{\mathrm{T}}_{\mu_r}]V^{\mathring{\mu}_r})\right)\,dz =E^{in}(x,\theta),\notag\\
	&V^{\mathring{\mu}_r}(x)-\int_{\Omega}\left(\Pi_k(x,z)c_r^{-3}[C^{\mathrm{T}}_{\mu_r}]V^{\mathring{\mu}_r}-i k \nabla\Phi_k(x,z)\times(c_r^{-3}[C^{\mathrm{T}}_{\epsilon_r}]U^{\mathring{\epsilon}_r})\right)\,dz =H^{in}(x,\theta).
\end{align}
for $x\in\Omega$, where $c_r>1$ is the constant defined in \eqref{dis}. {\bl Indeed, we can see by Remark \ref{add-rem} from the explicit forms of $[C^{\mathrm{T}}_{\epsilon_r}]$ and $[C^{\mathrm{T}}_{\mu_r}]$ that they are two constant tensors since $[P_0^{\epsilon_r}]$ and $[P_0^{\mu_r}]$ are constant}. It is direct to verify that $(U^{\mathring{\epsilon}_r}, V^{\mathring{\mu}_r})$ satisfies the following electromagnetic scattering problem of the equivalent medium 
\begin{equation}\label{LipSch}
\begin{cases}
\mathrm{curl}U^{\mathring{\epsilon}_r}-i k \mathring{\mu}_r V^{\mathring{\mu}_r}=0,\quad \mathrm{curl} V^{\mathring{\mu}_r}+i k\mathring{\epsilon_r} U^{\mathring{\epsilon}_r}=0\quad\mbox{in} \ D,\\
\mathrm{curl} U^{\mathring{\epsilon}_r}-i k V^{\mathring{\mu}_r}=0,\quad\mathrm{curl} V^{\mathring{\mu}_r}+i k U^{\mathring{\epsilon}_r}=0\quad\mbox{in}\ \mathbb{R}^3\backslash D,\\
U^{\mathring{\epsilon}_r}=U_s^{\mathring{\epsilon}_r}+U^{\mathring{\epsilon}_r}_{in},\\
\nu\times U^{\mathring{\epsilon}_r}|_{+}=\nu\times U^{\mathring{\epsilon}_r}|_{-}, \quad \nu\times V^{\mathring{\mu}_r}|_{+}=\nu\times V^{\mathring{\mu}_r}|_{-},\quad\mbox{on} \ \partial D,\\
V^{\mathring{\mu}_r}_s\times\frac{x}{|x|}-U^{\mathring{\epsilon}_r}_s=\Oh({\frac{1}{|x|^2}}),\quad \mbox{as} \ |x|\rightarrow\infty,
\end{cases}
\end{equation}
where $\mathring{\epsilon}_r$ and $\mathring{\mu}_r$ are formulated by
\begin{equation}\label{new-eu}
\mathring{\epsilon}_r=I+c_r^{-3}[C_{\epsilon_r}^{\mathrm{T}}],\quad
%=I+c_r^{-3}(I-c_r^{-3}[P_0^{\epsilon_r}]K_B^0)^{-1}[P_0^{\epsilon_r}]
\mathring{\mu}_r=I+c_r^{-3}[C_{\mu_r}^{\mathrm{T}}].
%=I+c_r^{-3}(I-c_r^{-3}[P_0^{\mu_r}]K_B^0)^{-1}[P_0^{\mu_r}].
\end{equation}
%with $Y_m^{\mathring{K}}$ standing for the solution to the following integral equation
%\begin{equation}\notag
%Y_m^{\mathring{K}}-\mathrm{div}\int_{D_m}\frac{1}{|x-z|}(\mathring{K}-I)\nabla Y_m^{\mathring{K}}(z)\,dz=z-z_m, \ z\in D_m,\mbox{ for }\mathring{K}=\mathring{\epsilon}_r, \mathring{\mu}_r.
%\end{equation}
%Indeed, the polarization tensors presented in \eqref{ls1} are defined by
%\begin{equation}\label{tensor}
%	c_r^{-3}[C^{\mathrm{T}}_{\epsilon_r}]:=\int_{D_m}\nabla Y_m^{\mathring{\epsilon}_r}(\mathring{\epsilon}_r-I)\,dx,\quad c_r^{-3}[C_{\mu_r}^{\mathrm{T}}]:=\int_{D_m}\nabla Y_m^{\mathring{\mu}_r}(\mathring{\mu}_r-I)\,dx.
%\end{equation}
%Since we assume that $[P_0^{\epsilon_r}]$ and $[P_0^{\mu_r}]$ are two constant tensors which can be denoted by two positive definite real symmetric matrices, 
{\bl It is obvious that $\mathring{\epsilon}_r$ and $\mathring{\mu}_r$ are constant and symmetric under Assumption \uppercase\expandafter{\romannumeral4} by the representations \eqref{exp-C1} and \eqref{exp-C2}. }
%Hence, $\mathring{\epsilon}_r$ and $\mathring{\mu}_r$ can be written explicitly from \eqref{tensor} to the form \eqref{new-eu}. 

%%%%%%%%%%%%%%%dividing line:(above: the precise form of the far-field and the linear algebraic system for continuous case)(down: regularity discussions)%%%%%%%%%%%%%%%%%%%%%%%%%%%%%%%
Recall the Green's function for the Helmhotz operator in \cite[Section 3]{AM}
\begin{equation}\notag
\Phi_k(x,z)=\frac{1}{4\pi}\frac{e^{i k |x-z|}}{|x-z|}, \quad\mbox{for} \ x\neq z,
\end{equation}
and the corresponding electromagnetic dyadic Green's function 
\begin{equation}\notag
\Pi_k(x,z)=k^2\Phi_k(x,z)I+\nabla_x\nabla_x\Phi_k(x,z)=k^2\Phi_k(x,z)I-\nabla_x\nabla_y\Phi_k(x,z)\quad\mbox{for} \ x\neq z.
\end{equation}

For any essentially bounded tensor $A$, denote the corresponding vector Newtonian operator as
\begin{equation}\label{newton}
\S_\Omega^{k,A}(W):=\int_{\Omega}\Phi_k(x,z)AW(z)\,dz\quad\mbox{for} \ W\in L^2(\Omega).
\end{equation}

Therefore, in order to investigate the regularity of $(U^{\mathring{\epsilon}_r}, V^{\mathring{\mu}_r})$, it suffices to study the regularity of $(E,H)$ solution to the following equivalent Lippmann-Schwinger  equation
\begin{equation}\label{ls2}
	\left(\begin{array}{c}
	E\\ 
	H
	\end{array} \right)-
	\left(\begin{array}{cc}
	(k^2+\nabla\mathrm{div})\S_{\Omega}^{k,\C_{\mathring{\epsilon}_r}}& i k \mathrm{curl}\S_{\Omega}^{k,\C_{\mathring{\mu}_r}}  \\ 
	-i k \mathrm{curl}\S_{\Omega}^{k, \C_{\mathring{\epsilon}_r}}& (k^2+\nabla\mathrm{div})\S_{\Omega}^{k, \C_{\mathring{\mu}_r}} 
	\end{array} \right)
	\left(\begin{array}{c}
	E\\ 
	H
	\end{array}
	\right)=
	\left(\begin{array}{c}
	E^{in}\\ 
	H^{in}
	\end{array}\right),
\end{equation}
where $\S_{\Omega}^{k, \C_{\mathring{\epsilon}_r}}, \S_{\Omega}^{k, \C_{\mathring{\mu}_r}}$ are the Newtonian operators defined by \eqref{newton} and $(E^{in}, H^{in})$ are the incident plane waves introduced in Subsection \ref{subsec1}.
%We claim that $(E,H)\in C^{0,\alpha}(\Omega)\times C^{0,\alpha}(\Omega)$. Indeed, we have the following theorem.

We present the main regularity result as follows.

\begin{theorem}\label{thm-regu}
	Let $\Omega$ be a bounded domain {\bl with $C^{1,\alpha}$ regularity}
%	with $\partial\Omega\in C^{1,\alpha}$, 
	for $\alpha\in (0,1)$. Consider the electromagnetic scattering problem \eqref{model-m}. {\bl There exists a positive constant $c_{\mathrm{reg}}(\alpha, \Omega)$, depending only on $\alpha$ and $\Omega$, such that if 
	\begin{equation}\label{cond}
		g(\alpha, k)c_{\mathrm{reg}}(\alpha, \Omega)c_r^{-3}c_\infty^{(\epsilon_r, \mu_r)}<1,
	\end{equation} }
	where  {\bl
	\begin{equation}\notag
	g(\alpha, k):=k^{3+\alpha}+k^3+k^2+k+1\quad\mbox{and}\quad c_\infty^{(\epsilon_r, \mu_r)}:=\max\{\lVert[P_0^{\epsilon_r}]\rVert_{L^{\infty}(\Omega)}, \lVert[P_0^{\mu_r}]\rVert_{L^{\infty}(\Omega)}\},
	\end{equation}}
 then we have
	\begin{equation}\notag
		(E,H)\in C^{0,\alpha}({\overline{\Omega}})\times C^{0,\alpha}({\overline{\Omega}}).
	\end{equation}
\end{theorem}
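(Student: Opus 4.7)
The plan is to recast the Lippmann-Schwinger system \eqref{ls2} as a fixed-point equation
\[
\mathcal{W} = \mathcal{K}\mathcal{W} + \mathcal{F}, \qquad \mathcal{W} := (E,H)^{\mathrm{T}}, \quad \mathcal{F} := (E^{in}, H^{in})^{\mathrm{T}},
\]
in the product Hölder space $X := C^{0,\alpha}(\overline{\Omega}) \times C^{0,\alpha}(\overline{\Omega})$, where $\mathcal{K}$ is the $2\times 2$ block matrix integral operator appearing in \eqref{ls2}. Since the incident plane waves are entire, $\mathcal{F} \in X$. The smallness condition \eqref{cond} will turn out to be exactly the contraction condition $\|\mathcal{K}\|_{X\to X}<1$; then Banach's fixed-point theorem gives a unique $\mathcal{W} \in X$, and uniqueness of the Maxwell scattering problem \eqref{LipSch} in the larger $H(\mathrm{curl},\Omega)$ setting identifies this fixed point with the solution of \eqref{ls1}, proving the claimed Hölder regularity.

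The heart of the argument is the block-wise operator-norm estimate
\[
\|\mathcal{K}\|_{X\to X} \leq c_{\mathrm{reg}}(\alpha,\Omega)\, g(\alpha,k)\, c_r^{-3}\, c_\infty^{(\epsilon_r,\mu_r)},
\]
where the factor $c_r^{-3}\,c_\infty^{(\epsilon_r,\mu_r)}$ encodes $\|\C_{\mathring{\epsilon}_r}\|_{L^\infty}, \|\C_{\mathring{\mu}_r}\|_{L^\infty} \lesssim c_r^{-3}\,c_\infty^{(\epsilon_r,\mu_r)}$ through \eqref{new-eu}. For the off-diagonal curl blocks $\pm ik\,\mathrm{curl}\,\S_\Omega^{k,\mathring{\C}}$, only one derivative of the weakly singular kernel $\Phi_k$ is involved, so classical Hölder estimates for the first-order derivative of the Newtonian potential on a $C^{1,\alpha}$ domain furnish a bound of the form $c(\alpha,\Omega)(k^{2+\alpha}+k+1)\|\mathring{\C}\|_{L^\infty}$; the sharp $k$-scaling is extracted by splitting $\Phi_k = \Phi_0 + (\Phi_k-\Phi_0)$, separating the singular static part from the entire oscillatory remainder whose derivatives grow polynomially in $k$.

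For the diagonal blocks $(k^2 + \nabla\mathrm{div})\S_\Omega^{k,\mathring{\C}}$, the crucial observation is the identity
\[
(k^2 + \nabla\mathrm{div})\S_\Omega^{k,\mathring{\C}} W = \mathrm{curl}\,\mathrm{curl}\,\S_\Omega^{k,\mathring{\C}} W - \mathring{\C} W \qquad \text{in }\Omega,
\]
which follows from $\Delta_x\Phi_k + k^2 \Phi_k = -\delta_x$ combined with $\mathrm{curl}\,\mathrm{curl} = \nabla\mathrm{div} - \Delta$. The term $\mathring{\C}W$ is trivially $C^{0,\alpha}$ since $\mathring{\C}$ is constant under Assumption IV. The term $\mathrm{curl}\,\mathrm{curl}\,\S_\Omega^{k,\mathring{\C}} W$ is a second-derivative Calderón-Zygmund singular integral of a Hölder density; using a principal-value representation with kernel $\nabla_x\nabla_x \Phi_k$ regularized by subtracting $\mathring{\C} W(x)$, the classical Hölder theory of singular integrals on domains with $C^{1,\alpha}$ boundary yields boundedness $C^{0,\alpha}(\overline{\Omega}) \to C^{0,\alpha}(\overline{\Omega})$. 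Combining this with the $k$-tracking gives
\[
\|(k^2+\nabla\mathrm{div})\S_\Omega^{k,\mathring{\C}}W\|_{C^{0,\alpha}(\overline{\Omega})} \leq c(\alpha,\Omega)\,(k^{3+\alpha}+k^2+1)\,\|\mathring{\C}\|_{L^\infty}\,\|W\|_{C^{0,\alpha}(\overline{\Omega})}.
\]
Summing the four block contributions produces the polynomial $g(\alpha,k)=k^{3+\alpha}+k^3+k^2+k+1$ and the geometric constant $c_{\mathrm{reg}}(\alpha,\Omega)$ absorbing the domain-dependent Hölder and singular-integral constants.

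The main obstacle is the third step: obtaining a clean $C^{0,\alpha}(\overline{\Omega})$ bound for the second-order Calderón-Zygmund singular integral $\nabla\mathrm{div}\,\S_\Omega^{k,\mathring{\C}}$ \emph{up to the boundary} of a merely $C^{1,\alpha}$ domain, while simultaneously keeping explicit control of both the $k$-scaling and the domain-dependent Hölder constants needed for the contraction bound. A minor technical point is verifying that the fixed point produced by Banach in $X$ coincides with the already-known $L^2$ (or $H(\mathrm{curl})$) solution of the Lippmann-Schwinger equation, which is immediate from well-posedness of \eqref{LipSch}.
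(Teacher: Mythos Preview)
Your approach is correct in spirit but takes a genuinely different route from the paper, and the difference is worth spelling out.

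The paper does \emph{not} attempt to bound the diagonal block $(k^2+\nabla\mathrm{div})\S_\Omega^{k,\mathring{\C}}$ directly as a Calder\'on--Zygmund operator on $C^{0,\alpha}(\overline{\Omega})$. Instead it integrates by parts in the divergence,
\[
\nabla\mathrm{div}\,\S_\Omega^{k,\C_{\mathring{\epsilon}_r}}(E)
= \nabla\int_\Omega \Phi_k(x,z)\,\mathrm{div}\bigl(\C_{\mathring{\epsilon}_r}E\bigr)(z)\,dz
- \nabla\int_{\partial\Omega}\Phi_k(x,z)\,\C_{\mathring{\epsilon}_r}E(z)\cdot\vec{n}\,d\sigma(z),
\]
and then uses the Maxwell structure ($\C_{\mathring{\epsilon}_r}$ constant, $\mathring{\epsilon}_r E = -(ik)^{-1}\mathrm{curl}\,H$) to kill the volume term via $\mathrm{div}(\C_{\mathring{\epsilon}_r}E)=0$. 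This leaves only $k^2\S_\Omega^{k,\C_{\mathring{\epsilon}_r}}E - \nabla\mathscr{S}_{\partial\Omega}^{k,\C_{\mathring{\epsilon}_r}}(e*)$ with $e*=E\cdot\vec{n}|_{\partial\Omega}$, so the paper enlarges the unknown to $(E,H,e*,h*)$ and works with a $4\times 4$ coupled volume--surface system on $C^{0,\alpha}(\overline{\Omega})^2\times C^{0,\alpha}(\partial\Omega)^2$. The point is that every operator appearing in this enlarged system is either a Newtonian volume potential, its curl, a single layer potential, or the gradient of a single layer potential---all of which have well-documented $C^{0,\alpha}\to C^{0,\alpha}$ mapping properties on $C^{1,\alpha}$ domains (the paper cites Colton--Kress and Mitrea--Mitrea--Verdera). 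No second-order Calder\'on--Zygmund bound up to the boundary is ever invoked.

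Your identity $(k^2+\nabla\mathrm{div})\S = \mathrm{curl}\,\mathrm{curl}\,\S - \mathring{\C}\cdot$ is correct, but it does not reduce the order of the singularity: $\mathrm{curl}\,\mathrm{curl}\,\S$ is still a second-order principal-value integral, so you have traded one hypersingular operator for another. What you then need---$C^{0,\alpha}(\overline{\Omega})$ boundedness of a CZ operator on a $C^{1,\alpha}$ domain up to the boundary---is the very obstacle you flag and then assert away as ``classical.'' Such results do exist (essentially the content of the Mitrea--Mitrea--Verdera reference), but they are substantially heavier than what the paper uses, and deriving the specific polynomial $g(\alpha,k)$ from them would require redoing the $k$-tracking through that machinery. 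The paper's enlargement trick buys precisely the avoidance of this step, at the cost of two extra boundary unknowns; your approach stays $2\times 2$ but pays in analytic depth at the point you yourself identify as the main obstacle.
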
 

\begin{proof}
	We split the proof of Theorem \ref{thm-regu} into the following three steps:
	
%(\uppercase\expandafter{\romannumeral1})
(\romannumeral1)
\emph{Construction of the coupled surface-volume system of integral equations}. 

Recall the Lippmann-Schwinger equation \eqref{ls2}. Using the integration by parts for the term $(k^2+\nabla\mathrm{div})\S_{\Omega}^{k,\C_{\mathring{\epsilon}_r}}$, we have
\begin{align}\label{re1}
	(k^2+\nabla\mathrm{div})\S_{\Omega}^{k,\C_{\mathring{\epsilon}_r}}(E)&=k^2\S_{\Omega}^{k,\C_{\mathring{\epsilon}_r}}(E)+\nabla\mathrm{div}\S_{\Omega}^{k,\C_{\mathring{\epsilon}_r}}(E)=k^2\S_{\Omega}^{k,\C_{\mathring{\epsilon}_r}}(E)+\nabla\int_{\Omega}\nabla_x\Phi_k(x,z)\C_{\mathring{\epsilon}_r}E(z)\,dz\notag\\
	&=k^2\S_{\Omega}^{k,\C_{\mathring{\epsilon}_r}}(E)-\nabla\int_{\Omega}\nabla_z\Phi_k(x,z)\C_{\mathring{\epsilon}_r}E(z)\,dz\notag\\
	&=k^2\S_{\Omega}^{k,\C_{\mathring{\epsilon}_r}}(E)+\nabla\int_{\Omega}\Phi_k(x,z)\mathrm{div}\C_{\mathring{\epsilon}_r}E(z)\,dz-\nabla\int_{\partial\Omega}\Phi_k(x,z)\C_{\mathring{\epsilon}_r}E(z)\cdot\vec{n}\,d\sigma(z).
\end{align}
	where $\C_{\mathring{\epsilon}_r}:=\mathring{\epsilon}_r-I$. {\bl Combining with \eqref{new-eu} as well as the explicit forms of $[C^{\mathrm{T}}_{\epsilon_r}]$ and $[C^{\mathrm{T}}_{\mu_r}]$ presented in \eqref{exp-C1} and \eqref{exp-C2}, since $\C_{\mathring{\epsilon}_r}$ is constant under Assumption \uppercase\expandafter{\romannumeral4},}
%	(by Assumption \uppercase\expandafter{\romannumeral3} (2)), 
there holds
	\begin{equation}\notag
		\mathrm{div}\C_{\mathring{\epsilon}_r}E=-\frac{\C_{\mathring{\epsilon}_r}}{i k \mathring{\epsilon}_r}\mathrm{div}(\mathrm{curl}H)=0\quad\mbox{in} \ \Omega,
	\end{equation}
	which implies 
	\begin{equation}\notag
		\nabla\int_{\Omega}\Phi_k(x,z)\mathrm{div}\C_{\mathring{\epsilon}_r}E(z)\,dz=0.
	\end{equation}
	
Define the single layer operator as 
\begin{equation}\label{sing-lay}
	\mathscr{S}_{\partial\Omega}^{k, \C_{\mathring{K}}}(V_t):=\int_{\partial\Omega}\Phi_k(x,z)\C_{\mathring{K}}V_t(z)\,d\sigma(z)
\end{equation}	
	for any surface vector function $V_t$, where $\mathring{K}=\mathring{\epsilon}_r, \mathring{\mu}_r$. Then \eqref{re1} can be simplified as 
	\begin{equation}\label{re2}
		(k^2+\nabla\mathrm{div})\S_{\Omega}^{k,\C_{\mathring{\epsilon}_r}}(E)=k^2\S_{\Omega}^{k,\C_{\mathring{\epsilon}_r}}(E)-\nabla\mathscr{S}_{\partial\Omega}^{k,\C_{\mathring{\epsilon}_r}}(e*)
	\end{equation}
by denoting $e*:=E(z)\cdot\vec{n}|_{\partial\Omega}$.	
Similarly, we have
\begin{equation}\label{re3}
	(k^2+\nabla\mathrm{div})\S_{\Omega}^{k,\C_{\mathring{\mu}_r}}(H)=k^2\S_{\Omega}^{k,\C_{\mathring{\mu}_r}}(H)-\nabla\mathscr{S}_{\partial\Omega}^{k,\C_{\mathring{\mu}_r}}(h*),
\end{equation}
where $h*:=H(z)\cdot\vec{n}|_{\partial\Omega}$.

Denote
\begin{equation}\notag
	\M_\Omega^{k,\C_{\mathring{K}}}:=i k \mathrm{curl}\S_{\Omega}^{k,\C_{\mathring{K}}},\quad \gamma_n\S_{\Omega}^{k,\C_{\mathring{K}}}:=\vec{n}\cdot\S_{\Omega}^{k,\C_{\mathring{K}}}|_{\partial\Omega}, \quad \gamma_n\M_\Omega^{k,\C_{\mathring{K}}}:= \vec{n}\cdot\M_\Omega^{k,\C_{\mathring{K}}}|_{\partial\Omega},
\end{equation}
\begin{equation}\notag
	\gamma_1\mathscr{S}_{\partial\Omega}^{k,\C_{\mathring{K}}}:=\vec{n}\cdot\nabla\mathscr{S}_{\partial\Omega}^{k,\C_{\mathring{K}}},\quad e^{in}:=\vec{n}\cdot E^{in}|_{\partial\Omega},\quad h^{in}:=\vec{n}\cdot H^{in}|_{\partial\Omega},\quad\mbox{for }  \mathring{K}=\mathring{\epsilon}_r, \mathring{\mu}_r.
\end{equation}
	Then by taking \eqref{re2} and \eqref{re3} into \eqref{ls2}, we can derive the following generalized Lippmann-Schwinger system
	\begin{equation}\label{reg-m}
		\left(
		\begin{array}{cccc}
		I-k^2\S_{\Omega}^{k,\C_{\mathring{\epsilon}_r}}&-\M_\Omega^{k,\C_{\mathring{\mu}_r}}  & \nabla\mathscr{S}^{k,\C_{\mathring{\epsilon}_r}}_{\partial\Omega}  & 0 \\ 
		\M_\Omega^{k,\C_{\mathring{\epsilon}_r}}& I-k^2\S_{\Omega}^{k,\C_{\mathring{\mu}_r}} & 0 &  \nabla\mathscr{S}^{k,\C_{\mathring{\mu}_r}}_{\partial\Omega}  \\ 
		-k^2\gamma_n\S_{\Omega}^{k,\C_{\mathring{\epsilon}_r}}& -\gamma_n\M_\Omega^{k,\C_{\mathring{\mu}_r}} & I+\gamma_1\mathscr{S}_{\partial\Omega}^{k,\C_{\mathring{\epsilon}_r}} & 0  \\ 
		\gamma_n\M_\Omega^{k,\C_{\mathring{\epsilon}_r}}& -k^2\gamma_n\S_{\Omega}^{k,\C_{\mathring{\mu}_r}} & 0 & I+\gamma_1\mathscr{S}_{\partial\Omega}^{k,\C_{\mathring{\mu}_r}}
		\end{array} 
		\right)
		\left(
		\begin{array}{c}
		E\\ 
		H\\ 
		e*\\ 
		h*
		\end{array} 
		\right)=
		\left(
		\begin{array}{c}
		E^{in}\\ 
		H^{in}\\ 
		e^{in}\\ 
		h^{in}
		\end{array} 
		\right)
	\end{equation}
	where the last two lines are deduced by restricting the first two lines on $\partial\Omega$.
	
	Denote
	\begin{equation}\notag
	\mathscr{A}:=
	\left(
	\begin{array}{cccc}
	-k^2\S_{\Omega}^{k,\C_{\mathring{\epsilon}_r}}&-\M_\Omega^{k,\C_{\mathring{\mu}_r}}  & \nabla\mathscr{S}^{k,\C_{\mathring{\epsilon}_r}}_{\partial\Omega}  & 0 \\ 
	\M_\Omega^{k,\C_{\mathring{\epsilon}_r}}& -k^2\S_{\Omega}^{k,\C_{\mathring{\mu}_r}} & 0 &  \nabla\mathscr{S}^{k,\C_{\mathring{\mu}_r}}_{\partial\Omega}  \\ 
	-k^2\gamma_n\S_{\Omega}^{k,\C_{\mathring{\epsilon}_r}}& -\gamma_n\M_\Omega^{k,\C_{\mathring{\mu}_r}} & \gamma_1\mathscr{S}_{\partial\Omega}^{k,\C_{\mathring{\epsilon}_r}} & 0  \\ 
	\gamma_n\M_\Omega^{k,\C_{\mathring{\epsilon}_r}}& -k^2\gamma_n\S_{\Omega}^{k,\C_{\mathring{\mu}_r}} & 0 & \gamma_1\mathscr{S}_{\partial\Omega}^{k,\C_{\mathring{\mu}_r}}
	\end{array} 
	\right).
	\end{equation}
	Then \eqref{reg-m} can be simplified as
	\begin{equation}\notag
		\left(I+\mathscr{A}\right)\left(
		\begin{array}{c}
		E\\ 
		H\\ 
		e*\\ 
		h*
		\end{array} \right)=
		\left(
		\begin{array}{c}
		E^{in}\\ 
		H^{in}\\ 
		e^{in}\\ 
		h^{in}
		\end{array} 
		\right)
	\end{equation}
	Indeed, \eqref{reg-m} reflects the coupled surface-volume system of the integral form.
	
	(\romannumeral2)\emph{Claim that $\mathscr{A}:C^{0,\alpha}\rightarrow C^{0,\alpha}$, $\alpha\in(0,1)$}.
	
	If $(E,H)\in C^{0,\alpha}({\overline{\Omega}})\times C^{0,\alpha}({\overline{\Omega}})$, by the mapping properties of the Newtonian operator and the Sobolev embedding theorem, we know that $\S_{\Omega}^{k,\C_{\mathring{K}}}: L^p(\Omega)\rightarrow W^{2,p}(\Omega)\hookrightarrow C^{1,\alpha}({\overline{\Omega}})$ for $p>3/(1-\alpha)$, $\mathring{K}=\mathring{\epsilon}_r,\mathring{\mu}_r$. This indicates that $\S_{\Omega}^{k,\mathring{\epsilon}_r}E, \S_{\Omega}^{k, \mathring{\mu}_r}H\in C^{1,\alpha}({\overline{\Omega}})$. Since $\partial\Omega\in C^{1,\alpha}$, which implies that $\vec{n}\in C^{0,\alpha}$, we can directly have
	\begin{equation}\notag
		\gamma_n\S_{\Omega}^{k,\C_{\mathring{\epsilon}_r}}E=\vec{n}\cdot\S_{\Omega}^{k,\C_{\mathring{\epsilon}_r}}E|_{\partial\Omega}\in C^{0,\alpha}(\partial\Omega),\quad
		\gamma_n\S_{\Omega}^{k,\C_{\mathring{\mu}_r}}H=\vec{n}\cdot\S_{\Omega}^{k,\C_{\mathring{\mu}_r}}H|_{\partial\Omega}\in C^{0,\alpha}(\partial\Omega).
	\end{equation}
	
	As $\M_\Omega^{k,\C_{\mathring{\epsilon}_r}}E=i k \mathrm{curl}\S_{\Omega}^{k,\C_{\mathring{\epsilon}_r}}E$, it is obvious that $\M_\Omega^{k,\C_{\mathring{\epsilon}_r}}E\in C^{0,\alpha}({\overline{\Omega}})$ for $p>3/(1-\alpha)$. Similarly, we have $\M_\Omega^{k,\mathring{\mu}_r}H\in C^{0,\alpha}({\overline{\Omega}})$. Thus there holds
	\begin{equation}\notag
		\gamma_n\M_\Omega^{k,\C_{\mathring{\epsilon}_r}}E=\vec{n}\cdot\M_\Omega^{k,\C_{\mathring{\epsilon}_r}}E|_{\Omega}\in C^{0,\alpha}(\partial\Omega),\quad \gamma_n\M_\Omega^{k,\C_{\mathring{\mu}_r}}H=\vec{n}\cdot\M_\Omega^{k,\C_{\mathring{\mu}_r}}H|_{\Omega}\in C^{0,\alpha}(\partial\Omega).
	\end{equation}
	
	From \cite{Metrea}, or \cite[Theorem 3.3]{CK}, we know that if $(e*, h*)\in C^{0,\alpha}(\partial\Omega)\times C^{0,\alpha}(\partial\Omega)$,  then the single layer potential fulfills
	\begin{equation}\label{bd}
		\mathscr{S}_{\partial\Omega}^{k,\C_{\mathring{\epsilon}_r}}(e*)\in C^{1,\alpha}({\overline{\Omega}}),\quad
		\mathscr{S}_{\partial\Omega}^{k,\C_{\mathring{\mu}_r}}(h*)\in C^{1,\alpha}({\overline{\Omega}}),
	\end{equation}
	which clearly indicates that
	\begin{equation}\label{bd2}
		\nabla\mathscr{S}_{\partial\Omega}^{k,\C_{\mathring{\epsilon}_r}}(e*)\in C^{0,\alpha}({\overline{\Omega}}),\quad
		\nabla \mathscr{S}_{\partial\Omega}^{k,\C_{\mathring{\mu}_r}}(h*)\in C^{0,\alpha}({\overline{\Omega}}).
	\end{equation}
	Therefore restricting \eqref{bd} as well as \eqref{bd2} on $\partial\Omega$, we obtain that
%	\begin{equation}\notag
%		\mathscr{S}_{\partial\Omega}^{k,\C_{\epsilon_r}}(e*)\in C^{1,\alpha}(\partial\Omega),\quad
%		\mathscr{S}_{\partial\Omega}^{k,\C_{\mu_r}}(h*)\in C^{1,\alpha}(\partial\Omega)
%	\end{equation}
	\begin{equation}\notag
	\gamma_1\mathscr{S}_{\partial\Omega}^{k,\C_{\mathring{\epsilon}_r}}(e*)=\vec{n}\cdot\nabla\mathscr{S}_{\partial\Omega}^{k,\C_{\mathring{\epsilon}\epsilon_r}}(e*)\in C^{0,\alpha}(\partial\Omega),\quad
	\gamma_1\mathscr{S}_{\partial\Omega}^{k,\C_{\mathring{\mu}_r}}(h*)=\vec{n}\cdot\nabla\mathscr{S}_{\partial\Omega}^{k,\C_{\mathring{\mu}_r}}(h*)\in C^{0,\alpha}(\partial\Omega).
	\end{equation}
	Recalling the definition of $\mathscr{A}$, it yields that $\mathscr{A}: C^{0,\alpha}\rightarrow C^{0,\alpha}$ for $\alpha\in(0,1)$.

	(\romannumeral3)\emph{The investigation of the invertibility of $I+\mathscr{A}$}.
	
	Denote
	\begin{equation}\notag
		X:=(E, H, e*, h*)^{\mathrm{T}}\in C^{0,\alpha}({\overline{\Omega}})\times C^{0,\alpha}({\overline{\Omega}})\times C^{0,\alpha}(\partial\Omega)\times C^{0,\alpha}(\partial\Omega),
	\end{equation}	
	and
	\begin{equation}\notag
		\lVert X\rVert_{C^{0,\alpha}}:=\lVert E\rVert_{C^{0,\alpha}({\overline{\Omega}})}+\lVert H\rVert_{C^{0,\alpha}({\overline{\Omega}})}+\lVert e*\rVert_{C^{0,\alpha}(\partial\Omega)}+\lVert h*\rVert_{C^{0,\alpha}(\partial\Omega)}.
	\end{equation}
	Then by direct computation
	\begin{equation}\notag
		\mathscr{A}X=
		\left(
		\begin{array}{c}
		-k^2 \S_{\Omega}^{k,\C_{\mathring{\epsilon}_r}}E-\M_\Omega^{k,\C_{\mathring{\mu}_r}}H-\nabla\mathscr{S}_{\partial\Omega}^{k,\C_{\mathring{\epsilon}_r}}e*\\ 
		\M_\Omega^{k,\C_{\mathring{\epsilon}_r}}E-k^2\S_{\Omega}^{k,\C_{\mathring{\mu}_r}}H-\nabla\mathscr{S}^{k,\C_{\mathring{\mu}_r}}_{\partial\Omega}h*\\ 
		-k^2\gamma_n\S_{\Omega}^{k,\C_{\mathring{\epsilon}_r}}E-\gamma_n\M_\Omega^{k,\C_{\mathring{\mu}_r}}H-\gamma_1\mathscr{S}_{\partial\Omega}^{k,\C_{\mathring{\epsilon}_r}}e*\\ 
		\gamma_n\M_\Omega^{k,\C_{\mathring{\epsilon}_r}}E-k^2\gamma_n\S_{\Omega}^{k,\C_{\mathring{\mu}_r}}H-\gamma_1\mathscr{S}_{\partial\Omega}^{k,\C_{\mathring{\mu}_r}}h*
		\end{array} 
		\right),
	\end{equation}
	and
	\begin{align}\label{norm}
	\lVert \mathscr{A}X\rVert_{C^{0,\alpha}}&\leq k^2 \lVert\S_{{{\Omega}}}^{k,\C_{\mathring{\epsilon}_r}}E\rVert_{C^{0,\alpha}({{{\overline{\Omega}}}})}+\lVert \M_\Omega^{k,\C_{\mathring{\mu}_r}}H\rVert_{C^{0,\alpha}({\overline{\Omega}})}+\lVert \nabla\mathscr{S}_{\partial\Omega}^{k,\C_{\mathring{\epsilon}_r}}e*\rVert_{C^{0,\alpha}({\overline{\Omega}})}\notag\\
	&+\lVert \M_\Omega^{k,\C_{\mathring{\epsilon}_r}}E\rVert_{C^{0,\alpha}({\overline{\Omega}})}+k^2\lVert\S_{\Omega}^{k,\C_{\mathring{\mu}_r}}H\rVert_{C^{0,\alpha}({\overline{\Omega}})}+\lVert \nabla\mathscr{S}_{\partial\Omega}^{k,\C_{\mathring{\mu}_r}}h*\rVert_{C^{0,\alpha}({\overline{\Omega}})}\notag\\
	&+ k^2\lVert \gamma_n\S_{\Omega}^{k,\C_{\mathring{\epsilon}_r}}E\rVert_{C^{0,\alpha}(\partial\Omega)}+\lVert\gamma_n\M_\Omega^{k,\C_{\mathring{\mu}_r}}H\rVert_{C^{0,\alpha}(\partial\Omega)}+\lVert\gamma_1\mathscr{S}_{\partial\Omega}^{k,\C_{\mathring{\epsilon}_r}}e*\rVert_{C^{0,\alpha}(\partial\Omega)}\notag\\
	&+\lVert\gamma_n\M_\Omega^{k,\C_{\mathring{\epsilon}_r}}E\rVert_{C^{0,\alpha}(\partial\Omega)}+ k^2\lVert \gamma_n\S_{\Omega}^{k,\C_{\mathring{\mu}_r}}H\rVert_{C^{0,\alpha}(\partial\Omega)}+\lVert\gamma_1\mathscr{S}_{\partial\Omega}^{k,\C_{\mathring{\mu}_r}}h*\rVert_{C^{0,\alpha}(\partial\Omega)}
	\end{align}
	
	First, let us consider $\lVert\S_{\Omega}^{k,\C_{\mathring{\epsilon}_r}}E\rVert_{C^{0,\alpha}({\overline{\Omega}})}$.
	\begin{align}\notag
		\S_{\Omega}^{k,\C_{\mathring{\epsilon}_r}}E=\int_{\Omega}\Phi_k(x,z)\C_{\mathring{\epsilon}_r}E(z)\,dz&=\int_{\Omega}\Phi_0(x,z)\C_{\mathring{\epsilon}_r}E(z)\,dz+\int_{\Omega}(\Phi_k-\Phi_0)(x,z)\C_{\mathring{\epsilon}_r}E(z)\,dz\notag\\
		&:=\S_{\Omega}^{0,\C_{\mathring{\epsilon}_r}}E+\S_{\Omega}^{R,\C_{\mathring{\epsilon}_r}}E.\notag
	\end{align}
	{\bl Denote
	\begin{equation}\label{add2}
		c_{P_0}^{\epsilon_r}:=\lVert[P_0^{\epsilon_r}]\rVert_{L^{\infty}(\Omega)},\quad
		c_{P_0}^{\mu_r}:=\lVert[P_0^{\mu_r}]\rVert_{L^{\infty}(\Omega)},\quad\mbox{and}\quad
		c_{\infty}^{(\epsilon_r,\mu_r)}=:\max\{c_{P_0}^{\epsilon_r}, c_{P_0}^{\mu_r}\}.
\end{equation}
By direct computations, it is easy to see that 
\begin{equation}\notag
	\lVert\C_{\mathring{\epsilon}_r}\rVert_{L^\infty({{\Omega}})}=\lVert\mathring{\epsilon_r}-I\rVert_{L^{\infty}(\Omega)}\leq 2c_r^{-3}\lVert [P_0^{\epsilon_r}]\rVert_{L^{\infty}(\Omega)}\leq 2c_r^{-3}c_{P_0}^{\epsilon_r}.
\end{equation}}
From the regularity discussions in part (ii) 
%	with respect to the Newtonian property as we ll as the Sobolev embedding theorem 
	for $\S_{\Omega}^{0,\C_{\mathring{\epsilon_r}}}E (k=0)$, we obtain {\bl
	\begin{equation}\label{ad2}
		\lVert\S_{\Omega}^{0,\C_{\mathring{\epsilon}_r}}E\rVert_{C^{1,\alpha}({\overline{\Omega}})}\leq c_{s,1}(\alpha,\Omega)c_r^{-3}c_{P_0}^{\epsilon_r}\lVert E\rVert_{C^{0,\alpha}({\overline{\Omega}})},
	\end{equation} }
	where $c_{s,1}(\alpha,\Omega)$ is the positive constant appearing in the mapping properties the Newtonian operator $\S_{\Omega}^{0,\C_{\mathring{\epsilon}_r}}$ and the Sobolev embedding theorem, which depends on $\alpha\mbox{ and } \Omega$. 
%	$c_\infty$ comes from the essentially uniformly bounded condition \eqref{bdd-contr} 
	Since
	\begin{equation}\notag
		(\Phi_k-\Phi_0)(x,z)=\frac{i k }{4\pi}\int_{0}^{1}e^{-i k t|x-z|}\,dt,
	\end{equation}
	{\bl 
%		which indicates that 
%		\begin{equation}\notag
%			|(\Phi_k-\Phi_0)(x,z)|\leq \frac{k}{4\pi},
%		\end{equation}
	and 
	\begin{equation}\label{phi}
		|\partial_x^\beta(\Phi_k-\Phi_0)(x, z)|=\Oh(k^{1+\beta}), \quad\mbox{for}\quad\beta\geq0,
	\end{equation}}
	it is easy to see that {\bl 
	\begin{equation}\label{ad1}
		\lVert\S_{\Omega}^{R,\C_{\mathring{\epsilon}_r}}E\rVert_{C^{1,\alpha}({\overline{\Omega}})}
%		=\lVert\int_{\Omega}(\Phi_k-\Phi_0)(x,z)\C_{\mathring{\epsilon}_r}E(z)\,dz\rVert_{C^{1,\alpha}({\overline{\Omega}})}
		\leq k(k^\alpha+1)|\Omega|c_r^{-3}c_{P_0}^{\epsilon_r}\lVert E\rVert_{C^{0,\alpha}({\overline{\Omega}})},
	\end{equation}  }
	reflecting the smoothness of $\Phi_k-\Phi_0$, where $|\Omega|$ is the volume of $\Omega$. Thus {\bl
		by denoting
		\begin{equation}\notag
			c_{\S}^{(1)}:=\max\{c_{s,1}(\alpha,\Omega), |\Omega|\},
		\end{equation} 
		we can obtain that
	\begin{equation}\label{re-s1}
		\lVert\S_{{{\Omega}}}^{k,\C_{\mathring{\epsilon}_r}}E\rVert_{C^{1,\alpha}({\overline{\Omega}})}
%		&\leq(k^{1+\alpha}+k+1)\max\{c_{s,1}(\alpha,\Omega), |\Omega|\}c_r^{-3}c_{P_0}^{\epsilon_r}\lVert E\rVert_{C^{0,\alpha}({\overline{\Omega}})}\notag\\
		\leq(k^{1+\alpha}+k+1)c_{\S}^{(1)}(\alpha, \Omega)c_r^{-3}c_{P_0}^{\epsilon_r}\lVert E\rVert_{C^{0,\alpha}({\overline{\Omega}})}.
	\end{equation}  }
	Similarly, we have {\bl
	\begin{equation}\notag
		\lVert\S_{\Omega}^{k,\C_{\mathring{\mu}_r}}H\rVert_{C^{1,\alpha}({\overline{\Omega}})}\leq (k^{1+\alpha}+k+1)c_{\S}^{(1)}(\alpha, \Omega)c_r^{-3}c_{P_0}^{\mu_r}\lVert H\rVert_{C^{0,\alpha}({\overline{\Omega}})}.
	\end{equation} }
	
	Next, we study $\lVert\M_\Omega^{k,\C_{\mathring{\epsilon}_r}}E\rVert_{C^{0,\alpha}(\overline{\Omega})}$. By the definition for the operator $\M_\Omega^{k,\C_{\mathring{\epsilon}_r}}$, we obtain {\bl
	\begin{equation}\notag
		\M_\Omega^{k,\C_{\mathring{\epsilon}_r}}E=i k \mathrm{curl}\S_{\Omega}^{k,\C_{\mathring{\epsilon}_r}}E:=i k \mathrm{curl}\S_{\Omega}^{0,\C_{\mathring{\epsilon}_r}}E+i k \mathrm{curl}\S_{\Omega}^{R,\C_{\mathring{\epsilon}_r}}E,
	\end{equation} 
	where
	\begin{equation}\notag
		\mathrm{curl}\S_{\Omega}^{0,\C_{\mathring{\epsilon}_r}}E=\int_{\Omega}\nabla_x\Phi_0(x,z)\times \C_{\mathring{\epsilon}_r}E(z)\,dz,\quad
		\mathrm{curl}\S_{\Omega}^{R,\C_{\mathring{\epsilon}_r}}E=\int_{\Omega}\nabla_x(\Phi_k-\Phi_0)(x,z)\times \C_{\mathring{\epsilon}_r}E(z)\,dz.
	\end{equation}
%	Since
%	\begin{equation}\label{ad3}
%		\nabla_x(\Phi_k-\Phi_0)(x,z)=\frac{(ik)^2}{4\pi}\int_{0}^1\frac{te^{i k t|x-z|}}{|x-z|}(x-z)\,dt,
%	\end{equation} 
	Utilizing \eqref{phi}, similar to \eqref{ad2} and \eqref{ad1}, we have
	\begin{equation}\notag
		\lVert\M_\Omega^{0,\C_{\mathring{\epsilon}_r}}E\rVert_{C^{0,\alpha}({\overline{\Omega}})}\leq k c_{s,1}(\alpha, \Omega)c_r^{-3}c_{P_0}^{\epsilon_r}\lVert E\rVert_{C^{0,\alpha}({\overline{\Omega}})},
 	\end{equation}
and
\begin{equation}\notag
	\lVert\M_\Omega^{R,\C_{\mathring{\epsilon}_r}}E\rVert_{C^{0,\alpha}({\overline{\Omega}})}\leq k^3 |\Omega|c_r^{-3}c_{P_0}^{\epsilon_r}\lVert E\rVert_{C^{0,\alpha}({\overline{\Omega}})}.
\end{equation}
Therefore, we can deduce that 
	\begin{align}\label{re-m1}
		&\lVert\M_\Omega^{k,\C_{\mathring{\epsilon}_r}}E\rVert_{C^{0,\alpha}({\overline{\Omega}})}\leq k(k^2+1)c_\S^{(1)}(\alpha, \Omega)c_r^{-3}c_{P_0}^{\epsilon_r}\lVert E\rVert_{C^{0,\alpha}({\overline{\Omega}})},\notag\\
		&\lVert\M_\Omega^{k,\C_{\mathring{\epsilon}_r}}H\rVert_{C^{0,\alpha}({\overline{\Omega}})}\leq k(k^2+1)c_\S^{(1)}(\alpha, \Omega)c_r^{-3}c_{P_0}^{\mu_r}\lVert H\rVert_{C^{0,\alpha}({\overline{\Omega}})}.
	\end{align} }
	
	Now we consider the estimate for the single layer potential $\mathscr{S}_{\partial\Omega}^{k,\C_{\mathring{\epsilon}_r}}e*$. {\bl Similar to $\S_{\Omega}^{k, C_{\mathring{\epsilon_r}}}E$, we can write
	\begin{equation}\notag
		\nabla \mathscr{S}_{\partial\Omega}^{k,\C_{\mathring{\epsilon}_r}}e*:=\mathscr{S}_{\partial\Omega}^{0,\C_{\mathring{\epsilon}_r}}e*+\mathscr{S}_{\partial\Omega}^{R,\C_{\mathring{\epsilon}_r}}e*,
    \end{equation}
    where 
    \begin{equation}\notag
    	\mathscr{S}_{\partial\Omega}^{0,\C_{\mathring{\epsilon}_r}}e*=\nabla\int_{\partial\Omega}\Phi_0(x, z)\C_{\mathring{\epsilon_r}}e*(z)\,d\sigma(z)\quad\mbox{and}\quad
    	\mathscr{S}_{\partial\Omega}^{R,\C_{\mathring{\epsilon}_r}}e*=\nabla\int_{\partial\Omega}(\Phi_k-\Phi_0)(x, z)\C_{\mathring{\epsilon_r}}e*(z)\,d\sigma(z).
    \end{equation}
    Then combining with \eqref{phi}, }from \cite{Metrea} and \cite[Theorem 3.3]{CK}, it is direct to know that  {\bl 
	\begin{equation}\label{re-s}
	\lVert	\nabla \mathscr{S}_{\partial\Omega}^{k,\C_{\mathring{\epsilon}_r}}e*\rVert_{C^{0,\alpha}({\overline{\Omega}})}\leq (k^2+1)c^{(1)}_{\mathscr{S}}(\alpha,\Omega)c_r^{-3}c_{P_0}^{\epsilon_r}\lVert e*\rVert_{C^{0,\alpha}(\partial\Omega)},
	\end{equation} }
	where $c^{(1)}_{\mathscr{S}}$ is the positive constant depending on $\alpha$ and $\Omega$. Similarly, we have {\bl 
	\begin{equation}\notag
		\lVert	\nabla \mathscr{S}_{\partial\Omega}^{k,\C_{\mathring{\mu}_r}}h*\rVert_{C^{0,\alpha}({\overline{\Omega}})}\leq (k^2+1)c^{(1)}_{\mathscr{S}}(\alpha,\Omega)c_r^{-3}c_{P_0}^{\mu_r}\lVert h*\rVert_{C^{0,\alpha}(\partial\Omega)}.
	\end{equation} }
	
	Recall \eqref{re-s1}. By restricting the estimate on the boundary, we can get
	\begin{equation}\label{re-m}
		\lVert \gamma_n\S_{\Omega}^{k,\C_{\mathring{\epsilon}_r}}E\rVert_{C^{0,\alpha}(\partial\Omega)}=\lVert\vec{n}\cdot\S_{\Omega}^{k,\C_{\mathring{\epsilon}_r}}E|_{\partial\Omega}\rVert_{C^{0,\alpha}(\partial\Omega)}\leq (k^{1+\alpha}+k+1) c_{\S}^{(2)}(\alpha, \Omega) c_r^{-3}c_{P_0}^{\epsilon_r}\lVert E\rVert_{C^{0,\alpha}({\overline{\Omega}})},
	\end{equation}
	where $c^{(2)}_\S$ is the positive constant depending on $\alpha\mbox{ and }\Omega$.
	Similarly, there holds
	\begin{equation}\notag
		\lVert \gamma_n\S_{\Omega}^{k,\C_{\mathring{\mu}_r}}H\rVert_{C^{0,\alpha}(\partial\Omega)}=\lVert\vec{n}\cdot\S_{\Omega}^{k,\C_{\mathring{\mu}_r}}H|_{\partial\Omega}\rVert_{C^{0,\alpha}(\partial\Omega)}\leq (k^{1+\alpha}+k+1)c_{\S}^{(2)}(\alpha, \Omega) c_r^{-3}c_{P_0}^{\mu_r}\lVert H\rVert_{C^{0,\alpha}({\overline{\Omega}})}.
	\end{equation}
	Besides, by restricting the estimate for \eqref{re-m1} on the boundary, we can obtain that
	\begin{equation}\label{re-d}
		\lVert\gamma_n\M_\Omega^{k,\C_{\mathring{\epsilon}_r}}E\rVert_{C^{0,\alpha}(\partial\Omega)}=\lVert\vec{n}\cdot\M_\Omega^{k,\C_{\mathring{\epsilon}_r}}E|_{\partial\Omega}\rVert_{C^{0,\alpha}(\partial\Omega)}
%		=k\lVert\vec{n}\cdot\mathrm{curl}\S_{\Omega}^{k,\C_{\epsilon_r}}E|_{\partial\Omega}\rVert_{C^{0,\alpha}(\Omega)}
		\leq k(k^2+1) c^{(2)}_\S(\alpha, \Omega)c_r^{-3}c_{P_0}^{\epsilon_r}\lVert E\rVert_{C^{0,\alpha}({\overline{\Omega}})},
	\end{equation}
	and {\bl
	\begin{equation}\notag
		\lVert\gamma_n\M_\Omega^{k,\C_{\mathring{\mu}_r}}H\rVert_{C^{0,\alpha}(\partial\Omega)}=\lVert\vec{n}\cdot\M_\Omega^{k,\C_{\mathring{\mu}_r}}H|_{\partial\Omega}\rVert_{C^{0,\alpha}(\partial\Omega)}\leq k(k^2+1) c^{(2)}_\S(\alpha, \Omega)c_r^{-3}c_{P_0}^{\mu_r}\lVert H\rVert_{C^{0,\alpha}({\overline{\Omega}})}.
	\end{equation} }
	 
	Furthermore, we can derive from \eqref{re-s} that {\bl 
	\begin{equation}\label{re-b}
		\lVert\gamma_1\mathscr{S}_{\partial\Omega}^{k,\C_{\mathring{\epsilon}_r}}e*\rVert_{C^{0,\alpha}(\partial\Omega)}=\lVert\vec{n}\cdot\nabla\mathscr{S}_{\partial\Omega}^{k,\C_{\mathring{\epsilon}_r}}e*\rVert_{C^{0,\alpha}(\partial\Omega)}\leq (k^2+1)c^{(2)}_{\mathscr{S}}(\alpha, \Omega)c_r^{-3}c_{P_0}^{\epsilon_r}\lVert e*\rVert_{C^{0,\alpha}(\partial\Omega)},
	\end{equation}
	and 
	\begin{equation}\notag
	\lVert\gamma_1\mathscr{S}_{\partial\Omega}^{k,\C_{\mathring{\mu}_r}}h*\rVert_{C^{0,\alpha}(\partial\Omega)}=\lVert\vec{n}\cdot\nabla\mathscr{S}_{\partial\Omega}^{k,\C_{\mathring{\mu}_r}}h*\rVert_{C^{0,\alpha}(\partial\Omega)}\leq (k^2+1) c^{(2)}_{\mathscr{S}}(\alpha, \Omega)c_r^{-3}c_{P_0}^{\mu_r}\lVert h*\rVert_{C^{0,\alpha}(\partial\Omega)},	
	\end{equation} }
	where $c^{(2)}_{\mathscr{S}}(\alpha, \Omega)$ is the positive constant depending on $\alpha$ and $\Omega$.
	
	Combining with \eqref{add2}, \eqref{re-s1}, \eqref{re-m1}, \eqref{re-s}, \eqref{re-m}, \eqref{re-d} and \eqref{re-b}, \eqref{norm} can be further written as 
	\begin{align}\notag
		\lVert\mathscr{A}X\rVert_{C^{0,\alpha}}&\leq k(k^{2+\alpha}+k^2+k+1)(c^{(1)}_\S(\alpha, \Omega)+c^{(2)}_\S(\alpha, \Omega))c_r^{-3}c_\infty^{(\epsilon_r, \mu_r)}(\lVert E\rVert_{C^{0,\alpha}(\overline{\Omega})}+\lVert H\rVert_{C^{0,\alpha}(\overline{\Omega})})\notag\\
		&+(k^2+1)(c^{(1)}_{\mathscr{S}}(\alpha, \Omega)+c^{(2)}_{\mathscr{S}}(\alpha, \Omega))c_r^{-3}c_\infty^{(\epsilon_r, \mu_r)}(\lVert e*\rVert_{C^{0,\alpha}(\partial\Omega)}+\lVert h*\rVert_{C^{0,\alpha}(\partial\Omega)}).\notag
	\end{align}
	Denote
	\begin{equation}\label{c-reg}
		c_{\mathrm{reg}}(\alpha, \Omega):=\max\{c^{(1)}_\S(\alpha, \Omega), c^{(2)}_\S(\alpha, \Omega), c^{(1)}_{\mathscr{S}}(\alpha, \Omega), c^{(2)}_{\mathscr{S}}(\alpha, \Omega)\},
	\end{equation}
	and 
	\begin{equation}\notag
		g(\alpha, k):=k^{3+\alpha}+k^3+k^2+k+1,
	\end{equation}
%	and {\bl
%	\begin{equation}\notag
%		c_\infty^{(\epsilon_r, \mu_r)}:=\max\{c_{P_0}^{\epsilon_r}, c_{P_0}^{\mu_r}\},
%\end{equation}}
	then  {\bl 
	\begin{align}\notag
		\lVert\mathscr{A}X\rVert_{C^{0,\alpha}}&\leq g(\alpha, k)c_{\mathrm{reg}}(\alpha, \Omega)c_r^{-3}c_\infty^{(\epsilon_r, \mu_r)}(\lVert E\rVert_{C^{0,\alpha}(\overline{\Omega})}+\lVert H\rVert_{C^{0,\alpha}(\overline{\Omega})}+\lVert e*\rVert_{C^{0,\alpha}(\partial\Omega)}+\lVert h*\rVert_{C^{0,\alpha}(\partial\Omega)})\notag\\
		&\leq g(\alpha, k)c_{\mathrm{reg}}(\alpha, \Omega))c_r^{-3}c_\infty^{(\epsilon_r, \mu_r)}\lVert X\rVert_{C^{0,\alpha}}.\notag
	\end{align} }
	From the condition \eqref{cond}, it is direct to verify that $I+\mathscr{A}$ is invertible by the corresponding convergent Born series.
	
	Finally, combining with (\romannumeral1)--(\romannumeral3), by the existence and the uniqueness of the solution to \eqref{ls2}, we can derive that $(E, H)\in C^{0,\alpha}(\overline{\Omega})\times C^{0,\alpha}(\overline{\Omega})$ by the fact that $(E^{in}, H^{in})$ are smooth enough.
\end{proof}
%{\bl
%\begin{remark}
	%It is easy to verify that for $c_r$ being large enough or $k$ being small enough, condition \eqref{cond} is sufficiently to be fulfilled.
%\end{remark}}

With the help the regularity result presented above, we shall prove our main result in the next section.

\section{The equivalent medium and the associated error estimate for the far-field}\label{sec-compare}

In this section, we focus on the investigation of the equivalent medium as $a\rightarrow0$ for the electromagnetic scattering problem \eqref{model-m}. In particular, 
%we formulate the precise form of the asymptotic far-field and the corresponding linear algebraic system. We also 
we evaluate the error estimate between the discrete form of the Foldy-Lax approximation and the corresponding asymptotic form with respect to $a$ and $c_r$.

Before presenting the proof of our main theorem, we first introduce some important notations and definitions. 
%Recall \cite[Section 3]{AM}. For any essentially bounded tensor $A$, denote the corresponding vector Newtonian operator as
%\begin{equation}\label{newton}
%	S_\Omega^{k,A}(W):=\int_{\Omega}\Phi_k(x,z)AW(z)\,dz\quad\mbox{for} \ W\in L^2(\Omega).
%\end{equation}
We set $S_m\subset\Omega_m$ to be the largest ball contained in $\Omega_m$ for every $m=1,\cdots, M$. 
%Denote 
%\begin{equation}\notag
%D=\cup_{m=1}^M D_m, \quad L=\Omega\backslash \cup_{m=1}^M \Omega_m.
%\end{equation}

\begin{definition}\label{def2}
	Let $[C_{\epsilon_r}^{\mathrm{T}}]$ and $[C_{\mu_r}^\mathrm{T}]$ be two constant $3\times3$-tensors, denoted by two positive definite real symmetric matrices. 
	Define the bounded linear operators $K^{\epsilon_r}$ and $K^{\mu_r}$ for  constant vector function $f_{\mathrm{const}}$ as follows
	\begin{align}\label{KB}
	K^{\epsilon_r}(f_{\mathrm{const}}):&=\frac{1}{|S_m|}\int_{\Omega_m}\int_{S_m}\Pi_0(x, z)c_r^{-3}[C_{\epsilon_r}^\mathrm{T}]f_{\mathrm{const}}\,dxdz,\notag\\
	K^{\mu_r}(f_{\mathrm{const}}):&=\frac{1}{|S_m|}\int_{\Omega_m}\int_{S_m}\Pi_0(x, z)c_r^{-3}[C_{\mu_r}^\mathrm{T}]f_{\mathrm{const}}\,dxdz,
	\end{align}
	which satisfy the following two conditions
	\begin{enumerate}[a)]
		\item $[C_{\epsilon_r}^\mathrm{T}](I-K^{\epsilon_r})^{-1}=[P_0^{\epsilon_r}]$,\quad $[C_{\mu_r}^\mathrm{T}](I-K^{\mu_r})^{-1}=[P_0^{\mu_r}]$;
		
		\item $\lVert K^{\epsilon_r}\rVert_{L^{\infty}}<1$, $\lVert K^{\mu_r}\rVert_{L^{\infty}}<1$,
	\end{enumerate}
	where $\Pi_0(x,z)=\nabla\nabla\Phi_0(x,z)$.
\end{definition}

\begin{remark}\label{rem-kb}
We have the following observations for the construction of $K^{\epsilon_r}$ and $K^{\mu_r}$.
	\begin{enumerate}
	\item Let $(e_i)_{i=1}^3$ be the canonical orthonormal basis in $\mathbb{R}^3$. Then, the bounded linear operator $K^{\epsilon_r}$ can be formulated by the matrix with the components $(K^{\epsilon_r}e_i\cdot e_j)_{i,j=1}^3$. We have a similar representation $K^{\mu_r}$.

\item The notation 
	\begin{equation}\label{baseK}
	K^0(f):=\frac{1}{|S_m|}\int_{\Omega_m}\int_{S_m}\Pi_0(x,z)f\,dx\,dz,
	\end{equation}
	should be understood in the following sense:
	\begin{equation}\notag
	K^0(f)=\frac{1}{|S_m|}\int_{\Omega_m}\nabla\int_{S_m}\nabla\Phi_0(x,z)f\,dx\,dz.
	\end{equation}
	To simplify the notations, throughout this paper, we follow the representation \eqref{baseK}. 
	
	\item Moreover, if $\Omega_m:=\delta \Omega +z_m$, it is straightforward to verify that $K^0$ can also be expressed by the following equivalent form
	\begin{equation}\notag
	K^0(f_{\mathrm{const}})=\frac{1}{\vert S\vert}\int_{\Omega}\int_{S}\Pi_0(x,z)f_{\mathrm{const}}\,dx\,dz,
	\end{equation}
	where $\Omega$ is the cube of size 2 and $S$ is the largest ball contained in $\Omega$ with radius 1. Thus $K^0$ is independent of $\delta$.
\end{enumerate}
\end{remark}

%Recall condition a) and b) in Definition \ref{def2}. Indeed, $[C^{\mathrm{T}}_{\epsilon_r}]$ and $[C^{\mathrm{T}}_{\mu_r}]$ have more explicit expression; see the following remark for more discussions on condition a) and b).

\begin{remark}\label{add-rem}
	From condition a), we see that
	\begin{equation}\notag
	[C^{\mathrm{T}}_{\epsilon_r}]=[P_0^{\epsilon_r}](I-c_r^{-3}K^0[C^{\mathrm{T}}_{\epsilon_r}]).
	\end{equation}
	By rearranging terms, we have 
	\begin{equation}\notag
	(I-c_r^{-3}[P_0^{\epsilon_r}]K^0)[C^{\mathrm{T}}_{\epsilon_r}]=[P_0^{\epsilon_r}].
	\end{equation}
	Thus $[C^{\mathrm{T}}_{\epsilon_r}]$ can be written more explicitly as 
	\begin{equation}\label{exp-C1}
	[C^{\mathrm{T}}_{\epsilon_r}]=(I-c_r^{-3}[P_0^{\epsilon_r}]K^0)^{-1}[P_0^{\epsilon_r}].
	\end{equation}
	Similarly, we have 
	\begin{equation}\label{exp-C2}
	[C^{\mathrm{T}}_{\mu_r}]=(I-c_r^{-3}[P_0^{\mu_r}]K^0)^{-1}[P_0^{\mu_r}].
	\end{equation}
	Furthermore, based on Remark \ref{rem-kb}, it is clear from \eqref{KB}, \eqref{exp-C1} and \eqref{exp-C2} that condition b) can be fulfilled if $c_r$ is large enough, which implies that $I-K^{\epsilon_r}$ and $I-K^{\mu_r}$ are invertible.
\end{remark}
%\begin{remark}\label{remK}
%	Indeed, in Definition \ref{def2}, condition b) can be fulfilled if $\lVert [P_0^{\epsilon_r}]\rVert_{L^{\infty}(\Omega)}$ and $\lVert [P_0^{\mu_r}]\rVert_{L^{\infty}(\Omega)}$ are sufficiently small. Therefore $I-K_B^{\epsilon_r}$ and $I-K_B^{\mu_r}$ are invertible. In a), if $[C_{\epsilon_r}^T]$ and $[C_{\mu_r}^T]$ are scalar, then they can be written explicitly as 
%	\begin{equation}\notag
%	[C_{\epsilon_r}^T]=(I+[P_0^{\epsilon_r}]K_B^0)^{-1}[P_0^{\epsilon_r}],\quad [C_{\mu_r}^T]=(I+[P_0^{\mu_r}]K_B^0)^{-1}[P_0^{\mu_r}],
%	\end{equation}
%	where 
%	\begin{equation}\notag
%		K_B^0(f_{\mathrm{const}})=\frac{1}{|S_m|}\int_{\Omega_m}\int_{S_m}\Pi_0(x,z)c_r^{-3}f_{\mathrm{const}}\,dxdz.
%	\end{equation}
%\end{remark}

Now, recall the Lippmann-Schwinger equation \eqref{ls1}. It is easy to verify that the solution $(U^{\mathring{\epsilon}_r}, V^{\mathring{\mu}_r})$ fulfills the electromagnetic scattering problem \eqref{LipSch}. The far-field corresponding to $(U^{\mathring{\epsilon}_r}, V^{\mathring{\mu}_r})$ of \eqref{ls1} possesses the form
\begin{equation}\label{lim-far}
	E^{\infty}_{\mathrm{eff}}(\hat{x})=\int_{\Omega}\left(\frac{k^2}{4\pi}e^{-i k\hat{x}\cdot z}\hat{x}\times (c_r^{-3}[C^{\mathrm{T}}_{\epsilon_r}]U^{\mathring{\epsilon}_r}(z)\times\hat{x})
	+\frac{i k}{4\pi}e^{-i k \hat{x}\cdot z}\hat{x}\times c_r^{-3}[C^{\mathrm{T}}_{\mu_r}]V^{\mathring{\mu}_r}(z)\right)\,dz.
\end{equation}
The discussions on \eqref{lim-far} will be given later in the proof of Theorem \ref{main-1}.

{\bl Rcall the notations \eqref{add2}. We further denote that}
\begin{equation}\notag
%	\lVert [P_0^{\epsilon_r}]\rVert_{L^\infty(\Omega)}:=c^{\epsilon_r}_{P_0},\quad \lVert [P_0^{\mu_r}]\rVert_{L^\infty(\Omega)}:=c^{\mu_r}_{P_0},\quad 
c_0^{\epsilon_r}:=\lVert[C^{\mathrm{T}}_{\epsilon_r}]\rVert_{L^\infty(\Omega)},\quad
c_0^{\mu_r}:=\lVert[C^{\mathrm{T}}_{\mu_r}]\rVert_{L^\infty(\Omega)},
\end{equation}
\begin{equation}\label{notation}
	 c_0^{\epsilon_r,-1}:=\lVert[P_0^{\epsilon_r}]^{-1}\rVert_{L^\infty(\Omega)},\quad c_0^{\mu_r,-1}:=\lVert[P_0^{\mu_r}]^{-1}\rVert_{L^\infty(\Omega)}.
\end{equation}

The following lemma which plays a very important role, provides a method of counting the number of the particles especially evaluating the distance between any two points $z_m\in D_m$ and $z_j\in D_j$; see \cite[Section 3.3]{ACP} for more details.

\begin{lemma}\cite[Section 3.3]{ACP}\label{conting1}
	For $k>0$, we have the following formulas:
	\begin{enumerate}[1.]
		\item If $k<3$, then
		\begin{equation}\notag
		\sum_{j=1\atop j\neq m}^M|z_j-z_m|^{-k}=\Oh(\delta^{-3}).
		\end{equation}
		
		\item If $k=3$, then
		\begin{equation}\notag
		\sum_{j=1\atop j\neq m}^M|z_j-z_m|^{-k}=\Oh(\delta^{-3}(1+\ln \delta)).
		\end{equation}
		
		\item  If $k>3$, then
		\begin{equation}\notag
		\sum_{j=1\atop j\neq m}^M|z_j-z_m|^{-k}=\Oh(\delta^{-k}).	
		\end{equation}
	\end{enumerate}	
\end{lemma}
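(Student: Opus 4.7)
The plan is to exploit distribution assumption (III)---that each $z_m$ lies in a cube $\Omega_m$ of side $\delta$ with the $\Omega_m$ pairwise disjoint---to reduce the sum to counting the number of indices contained in successive spherical shells around $z_m$.

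First I would fix $m$ and define, for each positive integer $n \leq N = \Oh(\delta^{-1})$, the shell
\[
A_n := \{\, j \neq m : n\delta \leq |z_j - z_m| < (n+1)\delta \,\},
\]
where $N$ is chosen so that every $z_j$ with $j \neq m$ falls in some $A_n$; this is possible because $\Omega$ is bounded. Since for $j \in A_n$ the cube $\Omega_j$ of volume $\delta^3$ is contained in the enlarged annular region of inner radius $(n-1)\delta$ and outer radius $(n+2)\delta$ about $z_m$, and since the $\Omega_j$ are pairwise disjoint, a direct volume comparison yields the cardinality bound
\[
|A_n| \leq C_0 \, n^2,
\]
with $C_0$ independent of $\delta$, $m$, and $n$.

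Next I would dominate the series shell by shell,
\[
\sum_{j \neq m} |z_j - z_m|^{-k} \leq \sum_{n=1}^{N} |A_n| \, (n\delta)^{-k} \leq C_0 \, \delta^{-k} \sum_{n=1}^{N} n^{2-k},
\]
and the conclusion then follows from the elementary behaviour of the $p$-series $\sum n^{2-k}$: when $k<3$ the partial sum is of order $N^{3-k} \sim \delta^{-(3-k)}$, producing the overall bound $\Oh(\delta^{-3})$; when $k=3$ it is the harmonic sum, contributing $\Oh(1+|\log \delta|)$ and so giving $\Oh(\delta^{-3}(1+\log \delta))$; and when $k>3$ the tail is summable, leaving only the prefactor $\delta^{-k}$.

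The only delicate point concerns cubes $\Omega_j$ truncated by $\partial \Omega$, as mentioned in Remark~\ref{layer}; however such boundary effects only \emph{decrease} the cardinality of each shell, so the upper bounds are unaffected. I expect the argument to be essentially mechanical: a shell decomposition, a volume-based counting estimate, and a division into the three $p$-series regimes---there is no serious obstacle beyond bookkeeping.
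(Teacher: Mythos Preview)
Your proposal is correct. The paper does not actually prove this lemma; it is quoted directly from \cite[Section~3.3]{ACP} and used as a black box throughout Section~\ref{sec-prop}. Your shell decomposition and volume-counting argument is precisely the standard proof of such estimates and is essentially what is carried out in the cited reference, so there is nothing to correct.

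One minor remark on presentation: the paper (in the proof of Proposition~\ref{lem-eq}, estimating $C_{1,m}^0$) uses a closely related but slightly different counting device---concentric \emph{cubic} layers $\Sigma_\ell$ with faces $F_\ell$, rather than your spherical shells $A_n$---and appeals to \cite[Lemma~A.1]{AM} for the fact that each layer carries $\Oh(\ell^2)$ centres. The two decompositions are interchangeable for this lemma; your spherical-shell version is arguably cleaner because the volume comparison is immediate, while the cubic-layer version aligns better with the periodic cube structure of the $\Omega_m$. Either yields the same $|A_n|=\Oh(n^2)$ bound and the same trichotomy for $\sum n^{2-k}$.
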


In the subsequent discussions, for the simplification of the notation, we denote
\begin{equation}\notag
|z_j-z_m|^{-k}:=\delta_{mj}^{-k}\quad\mbox{for }z_m\in D_m, z_j\in D_j.
\end{equation}

Based on Lemma \ref{conting1}, next, we present two essential propositions to preliminarily investigate the relation between the discrete linear algebraic system and the Lippmann-Schwinger equation. To clear the structure of our paper, we postpone the proof of Proposition \ref{lem-eq} later in Section \ref{sec-prop}.

\begin{proposition}\label{lem-eq}
	Let $[C^{\mathrm{T}}_{\epsilon_r}]$ and $[C^{\mathrm{T}}_{\mu_r}]$ be as introduced in Definition \ref{def2} and $K^{\epsilon_r}$ and $K^{\mu_r}$ are the associated bounded linear operators defined by \eqref{KB}. Under the assumptions $(\uppercase\expandafter{\romannumeral1}, \uppercase\expandafter{\romannumeral2}, \uppercase\expandafter{\romannumeral3}, \uppercase\expandafter{\romannumeral4})$, we have the following equations
	\begin{align}\label{main-est1}
		&((I-K^{\epsilon_r})\frac{1}{|S_m|}\int_{S_m}U^{\mathring{\epsilon}_r}(x)\,dx-U_m)-a^3\sum_{j=1\atop j\neq m}^M\left[\right.\Pi_k(z_m,z_j)[P_0^{\epsilon_r}]((I-K^{\epsilon_r})\frac{1}{|S_j|}\int_{S_j}U^{\mathring{\epsilon}_r}(y)\,dy-U_j)\notag\\
		&+i k \nabla\Phi_k(z_m,z_j)\times ([P_0^{\mu_r}]((I-K^{\mu_r})\frac{1}{|S_j|}\int_{S_j}V^{\mathring{\mu}_r}(y)\,dy-V_j))
		\left.\right]\notag\\
		&=\Oh(c_ra)+A+I_B+B^R_{1,m}+B_{2,m}+C,
	\end{align}
	where $S_m$ is the largest ball contained in $\Omega_m$, $U_m$ is the solution to \eqref{dislinear} and 
	\begin{align}
		&A=\frac{1}{|S_m|}\sum_{j=1\atop j\neq m}^M\int_{\Omega_j}\int_{S_m}\Pi_k(x,z)c_r^{-3}[C^\mathrm{T}_{\epsilon_r}]U^{\mathring{\epsilon}_r}(z)\,dx\,dz-a^3\sum_{j=1\atop j\neq m}^M\Pi_k(z_m,z_j)[C^\mathrm{T}_{\epsilon_r}]\frac{1}{|S_j|}\int_{S_j}U^{\mathring{\epsilon}_r}(y)\,dy\notag\\
		&+i k \frac{1}{|S_m|}\sum_{j=1\atop j\neq m}^M\int_{\Omega_j}\int_{S_m}\nabla\Phi_k(x,z)\times(c_r^{-3}[C^\mathrm{T}_{\mu_r}]V^{\mathring{\mu_r}})\,dx\,dz-a^3\sum_{j=1\atop j\neq m}^M i k \nabla\Phi_k(z_m,z_j)\times([C^\mathrm{T}_{\mu_r}]\frac{1}{|S_j|}\int_{S_j}V^{\mathring{\mu}_r}(y)\,dy),\notag \\
		&I_B=\frac{1}{|S_m|}\int_{\Omega_m}\int_{S_m}\Pi_0(x,z)c_r^{-3}[C^\mathrm{T}_{\epsilon_r}](U^{\mathring{\epsilon}_r}(z)-\frac{1}{|S_m|}\int_{S_m}U^{\mathring{\epsilon}_r}(y)\,dy)\,dx\,dz,\notag\\
		&B^R_{1,m}=\frac{1}{|S_m|}\int_{\Omega_m}\int_{S_m}(\Pi_k-\Pi_0)(x,z)c_r^{-3}[C^\mathrm{T}_{\epsilon_r}]U^{\mathring{\epsilon}_r}(z)\,dx\,dz\notag\\
		&B_{2,m}=\frac{1}{|S_m|}\int_{\Omega_m}\int_{S_m}ik\nabla\Phi_k(x,z)\times(c_r^{-3}[C^\mathrm{T}_{\mu_r}]V^{\mathring{\mu}_r}(z))\,dx\,dz\notag\\
		&C=\frac{1}{|S_m|}\int_L\int_{S_m}(\Pi_k(x,z)c_r^{-3}[C^\mathrm{T}_{\epsilon_r}]U^{\mathring{\epsilon}_r}(z)+ik\nabla\Phi_k(x,z)\times(c_r^{-3}[C^\mathrm{T}_{\mu_r}]V^{\mathring{\mu}_r}(z)))\,dx\,dz,\quad L=\Omega\backslash\cup_{m=1}^M\Omega_m.\notag
	\end{align}
	Furthermore, there holds the following estimates according to $A, I_B, B^R_{1,m}, B_2$ and $C$ with respect to $c_r$ and $a$ for sufficiently small $a$:
%	\begin{align}\label{est-a}
%		&\sum_{m=1}^M|A|^2\leq 3^{\alpha+4}c{_0^{\epsilon_r}}^2\left[\right. c_\alpha^2(c_r^{2\alpha-3}+k^4(c^2_{(1)}+k^2))a^{2\alpha-3}+c_r^{2(\alpha-\beta)-3}c_\alpha^2a^{2(\alpha-\beta)-3}+c^2_{(2)}\lVert U^{\epsilon_r}\rVert_{L^\infty(\Omega)}c_r^{-3}a^{-3}\notag\\
%		&+k^4c_\alpha^2c^2_{(1)}c^{2\alpha+1}a^{2\alpha+1}\left.\right]+3^{\alpha+2}k^2c{_0^{\mu_r}}^2c_\alpha^2(k^2c^2_{A,2}c_r^{2\alpha+1}a^{2\alpha+1}+c^2_{A,1}c_r^{2\alpha-1}a^{2\alpha-1}+(c_{A,1}+kc_{A,2})^2c_r^{2\alpha-3}a^{2\alpha-3}).
%	\end{align}
\begin{align}\label{group-est}
	&\sum_{m=1}^M|A|^2=\Oh({c_0^{\epsilon_r}}^2c_r^{-9}a^{-3}),\quad
	\sum_{m=1}^M|I_B|^2=\Oh( {c_0^{\epsilon_r}}^2c_\alpha^2c_r^{2\alpha-9}a^{2\alpha-3}),\quad\alpha\in(0,1)\notag\\
	&\sum_{m=1}^M|B_{1,m}^R|^2=\Oh({k^4c_0^{\epsilon_r}}^2\lVert U^{\mathring{\epsilon}_r}\rVert^2_{L^\infty(\Omega)}c_r^{-5}a),
	\sum_{m=1}^M|B_{2,m}|^2=\Oh(k^2{c_0^{\mu_r}}^2\lVert V^{\mathring{\mu}_r}\rVert^2_{L^\infty(\Omega)}(12+9k^2c_r^2a^2)c_r^{-7}a^{-1}),\notag\\
	&\sum_{m=1}^M|C|^2=\Oh\left(\right.({c_0^{\epsilon_r}}^2\lVert U^{\mathring{\epsilon}_r}\rVert^2_{L^\infty(\Omega)}(32c_{\mathrm{aver}}+k^4(36c_0^4\pi^2c_r^2a^2+k^2))\notag\\
	&\qquad+\pi^2k^2c_0^2{c_0^{\mu_r}}^2\lVert V^{\mathring{\mu}_r}\rVert^2_{L^\infty(\Omega)}(4+k^2c_0^2c_r^2a^2))c_r^{-7}a^{-1}\left.\right)
\end{align}
where $c_\alpha$, $c_{\mathrm{aver}}$ and $c_0$ are positive constants.
\end{proposition}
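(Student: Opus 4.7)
The plan is to derive the identity by evaluating the Lippmann-Schwinger equation \eqref{ls1} pointwise inside each small ball $S_m\subset\Omega_m$, averaging over $S_m$, and then subtracting the discrete algebraic system \eqref{dislinear} term by term. The key algebraic move is a careful splitting of the integration domain $\Omega=\Omega_m\cup\bigl(\bigcup_{j\neq m}\Omega_j\bigr)\cup L$, where $L=\Omega\setminus\bigcup_m\Omega_m$ is the boundary layer from Remark \ref{layer}. For the self-interaction piece (the integral over $\Omega_m$) I would split the kernel as $\Pi_k=\Pi_0+(\Pi_k-\Pi_0)$ and then replace $U^{\mathring{\epsilon}_r}(z)$ by its average on $S_m$. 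The $\Pi_0$ contribution with the average collapses into $K^{\epsilon_r}\bigl(\tfrac{1}{|S_m|}\int_{S_m}U^{\mathring{\epsilon}_r}\bigr)$ by the very definition \eqref{KB}, while the remainder terms are exactly $I_B$, $B^R_{1,m}$ and (for the magnetic kernel) $B_{2,m}$. For the distant interactions over $\Omega_j$, $j\neq m$, I would freeze the kernel and the density at $z_j$, the mismatch being collected into $A$, so that after using \eqref{exp-C1} and identity a) of Definition \ref{def2} to convert $c_r^{-3}[C^{\mathrm T}_{\epsilon_r}](I-K^{\epsilon_r})^{-1}$ into $c_r^{-3}[P_0^{\epsilon_r}]$ on each $\Omega_j$, these terms match the discrete coupling in \eqref{dislinear}. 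The boundary-layer contribution is $C$, and the $\Oh(c_r a)=\Oh(\delta)$ lost in this splitting accounts for the volume of $L$ via Remark \ref{layer}.

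To obtain the exact form of the left-hand side of \eqref{main-est1}, I would now subtract the discrete equation for $U_m$ from the averaged Lippmann-Schwinger relation. The factor $(I-K^{\epsilon_r})$ premultiplying $\tfrac{1}{|S_m|}\int_{S_m}U^{\mathring{\epsilon}_r}$ arises because the $\Omega_m$ self-term contributed $K^{\epsilon_r}$ applied to this average, which, when moved to the left of the equation, combines with the identity to give $(I-K^{\epsilon_r})$. The analogous identity for $V^{\mathring{\mu}_r}$ is obtained exactly the same way using $K^{\mu_r}$ and $[C^{\mathrm T}_{\mu_r}]$. Summing over $m$ produces the claimed block form. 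This part of the argument is essentially algebraic bookkeeping; no analytic estimates are needed yet.

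The analytic heart is then the estimates \eqref{group-est}. For $\sum_m|A|^2$ I would use the smoothness of the kernel $\Pi_k(\cdot,\cdot)$ and of $U^{\mathring{\epsilon}_r},V^{\mathring{\mu}_r}$ away from the diagonal, Taylor expand $\Pi_k(x,z)$ around $(z_m,z_j)$ inside $S_m\times\Omega_j$, and control the resulting diagonal-type sums $\sum_{j\neq m}\delta_{mj}^{-k}$ by Lemma \ref{conting1}; Cauchy–Schwarz in $m$ then turns the quadratic sum into a product of counting sums. For $I_B$ the point is that $\Pi_0$ is integrable on $\Omega_m$ and $U^{\mathring{\epsilon}_r}\in C^{0,\alpha}(\overline{\Omega})$ by Theorem \ref{thm-regu}, so $|U^{\mathring{\epsilon}_r}(z)-\tfrac{1}{|S_m|}\int_{S_m}U^{\mathring{\epsilon}_r}|\leq c_\alpha \delta^{\alpha}=c_\alpha (c_r a)^{\alpha}$ uniformly on $\Omega_m$; combined with $M=\Oh(\delta^{-3})$ this gives the stated bound. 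For $B^R_{1,m}$ I would use the pointwise bound $|\Pi_k(x,z)-\Pi_0(x,z)|\lesssim k^2$ in a form analogous to \eqref{phi}, so that the integrand is bounded and only the volume $\delta^6$ of $S_m\times\Omega_m$ enters, then sum over $m$. For $B_{2,m}$ I would expand $\nabla\Phi_k=\nabla\Phi_0+\nabla(\Phi_k-\Phi_0)$, noting that the strongly singular piece $\nabla\Phi_0$ integrated against a constant over a symmetric ball $S_m$ cancels to leading order, leaving the indicated $k^2$-type contributions. For $C$ I would use that $|L|=\Oh(\delta)$ together with a uniform bound on the integrand over $L$ (away from the observation cube).

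The main obstacle I foresee is not any single estimate in \eqref{group-est}, but the self-interaction identification in the first part: I must make sure that, after the $\Omega_m$-integral is approximated by $K^{\epsilon_r}(\tfrac{1}{|S_m|}\int_{S_m}U^{\mathring{\epsilon}_r})$, the conversion from $c_r^{-3}[C^{\mathrm T}_{\epsilon_r}]$ on the \emph{distant} cubes $\Omega_j$ into $[P_0^{\epsilon_r}]$ really uses the exact identity $[C^{\mathrm T}_{\epsilon_r}](I-K^{\epsilon_r})^{-1}=[P_0^{\epsilon_r}]$ from Definition \ref{def2}, and not an approximate version; this is what forces the specific shape of $(I-K^{\epsilon_r})$ on the left-hand side of \eqref{main-est1}. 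Once this algebraic compatibility is in place, the rest reduces to the Taylor-expansion and counting-lemma bookkeeping sketched above.
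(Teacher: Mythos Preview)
Your overall architecture matches the paper's proof: average \eqref{ls1} over $S_m$, split $\Omega=\Omega_m\cup\bigl(\cup_{j\neq m}\Omega_j\bigr)\cup L$, identify the self-interaction with $K^{\epsilon_r}$ via \eqref{KB}, then subtract \eqref{dislinear} and use condition a) of Definition \ref{def2} to convert $[C^{\mathrm T}_{\epsilon_r}]$ into $[P_0^{\epsilon_r}]$. That part is fine.

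There are, however, three concrete gaps in the analytic portion. First, the $\Oh(c_ra)$ term does not come from the volume of $L$; it comes from the Taylor expansion $E^{in}(x)=E^{in}(z_m)+\Oh(\delta)$ when you replace the averaged right-hand side $\tfrac{1}{|S_m|}\int_{S_m}E^{in}$ by $E^{in}(z_m)$. The layer $L$ is accounted for entirely by $C$. Second, your argument for $I_B$ assumes that ``$\Pi_0$ is integrable on $\Omega_m$,'' but $\Pi_0=\nabla\nabla\Phi_0$ has a $|x-z|^{-3}$ singularity and is \emph{not} locally integrable in $\mathbb{R}^3$; the paper handles $I_B$ via the Calder\'on--Zygmund inequality applied to $\int_{\mathbb{R}^3}\Pi_0(x,z)\chi_{\Omega_m}(z)\,dz$ in $L^p$, combined with H\"older's inequality, and only then uses the $C^{0,\alpha}$ oscillation bound. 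Third, and most seriously, your plan for $C$ (``$|L|=\Oh(\delta)$ together with a uniform bound on the integrand'') fails for the dominant piece $C^0_{1,m}=\int_L\Pi_0(z_m,z)c_r^{-3}[C^{\mathrm T}_{\epsilon_r}]U^{\mathring{\epsilon}_r}\,dz$: the kernel $\Pi_0(z_m,z)\sim|z_m-z|^{-3}$ is unbounded as $z_m$ approaches $\partial\Omega$, so there is no uniform bound to invoke. The paper instead performs a layered counting argument near $\partial\Omega$, stratifying $L$ into shells at distance $(n+\ell)\delta$ from $z_m$ (with $\ell$ indexing the distance of $z_m$ from the boundary) and summing $\sum_{\ell}\bigl(\sum_n n/(n+\ell)^3\bigr)^2$ explicitly to extract the $c_r^{-7}a^{-1}$ rate. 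Without this refinement your bound on $\sum_m|C|^2$ would diverge.
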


Following the similar arguments in Proposition \ref{lem-eq}, we also obtain the comparing equation and corresponding related estimates for $V^{\mathring{\mu}_r}$ and $V_m$. We state them in the following corollary.

\begin{corollary}\label{cor-v}
	Let $[C^\mathrm{T}_{\epsilon_r}]$, $[C^\mathrm{T}_{\mu_r}]$, $K^{\epsilon_r}$ and $K^{\mu_r}$ be the same as Proposition \ref{lem-eq}. Under the assumptions $(\uppercase\expandafter{\romannumeral1}, \uppercase\expandafter{\romannumeral2}, \uppercase\expandafter{\romannumeral3}, \uppercase\expandafter{\romannumeral4})$, we have the following equation
	\begin{align}
	&((I-K^{\mu_r})\frac{1}{|S_m|}\int_{S_m}V^{\mathring{\mu}_r}(x)\,dx-V_m)-a^3\sum_{j=1\atop j\neq m}^M\left[\right.\Pi_k(z_m,z_j)[P_0^{\mu_r}]((I-K^{\mu_r})\frac{1}{|S_j|}\int_{S_j}V^{\mathring{\mu}_r}(y)\,dy-V_j)\notag\\
	&-i k \nabla\Phi_k(z_m,z_j)\times ([P_0^{\epsilon_r}]((I-K^{\epsilon_r})\frac{1}{|S_j|}\int_{S_j}U^{\mathring{\epsilon}_r}(y)\,dy-U_j))
	\left.\right]\notag\\
	&=\Oh(c_ra)+\tilde{A}+\tilde{I}_B+\tilde{B}^{R}_{1,m}+\tilde{B}_{2,m}+\tilde{C},\notag
	\end{align}
	where $S_m$ is the largest sphere contained in $\Omega_m$, $V_m$ is the solution to \eqref{dislinear} and 
	\begin{align}
	&\tilde{A}=\frac{1}{|S_m|}\sum_{j=1\atop j\neq m}^M\int_{\Omega_j}\int_{S_m}\Pi_k(x,z)c_r^{-3}[C^\mathrm{T}_{\mu_r}]V^{\mathring{\mu}_r}(z)\,dx\,dz-a^3\sum_{j=1\atop j\neq m}^M\Pi_k(z_m,z_j)[C^\mathrm{T}_{\mu_r}]\frac{1}{|S_j|}\int_{S_j}V^{\mathring{\mu}_r}(y)\,dy\notag\\
	&-i k( \frac{1}{|S_m|}\sum_{j=1\atop j\neq m}^M\int_{\Omega_j}\int_{S_m}\nabla\Phi_k(x,z)\times(c_r^{-3}[C^\mathrm{T}_{\epsilon_r}]U^{\mathring{\epsilon}_r})\,dx\,dz-a^3\sum_{j=1\atop j\neq m}^M  \nabla\Phi_k(z_m,z_j)\times([C^\mathrm{T}_{\epsilon_r}]\frac{1}{|S_j|}\int_{S_j}U^{\mathring{\epsilon}_r}(y)\,dy)),\notag \\
	&\tilde{I}_B=\frac{1}{|S_m|}\int_{\Omega_m}\int_{S_m}\Pi_0(x,z)c_r^{-3}[C^\mathrm{T}_{\mu_r}](V^{\mathring{\mu}_r}(z)-\frac{1}{|S_m|}\int_{S_m}V^{\mathring{\mu}_r}(y)\,dy)\,dx\,dz,\notag\\
	&\tilde{B}^R_{1,m}=\frac{1}{|S_m|}\int_{\Omega_m}\int_{S_m}(\Pi_k-\Pi_0)(x,z)c_r^{-3}[C^\mathrm{T}_{\mu_r}]V^{\mathring{\mu}_r}(z)\,dx\,dz\notag\\
	&\tilde{B}_{2,m}=-\frac{1}{|S_m|}\int_{\Omega_m}\int_{S_m}ik\nabla\Phi_k(x,z)\times(c_r^{-3}[C^\mathrm{T}_{\epsilon_r}]U^{\mathring{\epsilon}_r}(z))\,dx\,dz\notag\\
	&\tilde{C}=\frac{1}{|S_m|}\int_L\int_{S_m}(\Pi_k(x,z)c_r^{-3}[C^\mathrm{T}_{\mu_r}]V^{\mathring{\mu}_r}(z)-ik\nabla\Phi_k(x,z)\times(c_r^{-3}[C^\mathrm{T}_{\epsilon_r}]U^{\mathring{\epsilon}_r}(z)))\,dx\,dz,\quad L=\Omega\backslash\cup_{m=1}^M\Omega_m.\notag
	\end{align}
	Furthermore, there holds the following estimates according to $\tilde{A}, \tilde{I}_B, \tilde{B}^R_{1,m}, \tilde{B}_2$ and $\tilde{C}$ with respect to $c_r$ and $a$ for sufficiently small $a$:
	%	\begin{align}\label{est-a}
	%		&\sum_{m=1}^M|A|^2\leq 3^{\alpha+4}c{_0^{\epsilon_r}}^2\left[\right. c_\alpha^2(c_r^{2\alpha-3}+k^4(c^2_{(1)}+k^2))a^{2\alpha-3}+c_r^{2(\alpha-\beta)-3}c_\alpha^2a^{2(\alpha-\beta)-3}+c^2_{(2)}\lVert U^{\epsilon_r}\rVert_{L^\infty(\Omega)}c_r^{-3}a^{-3}\notag\\
	%		&+k^4c_\alpha^2c^2_{(1)}c^{2\alpha+1}a^{2\alpha+1}\left.\right]+3^{\alpha+2}k^2c{_0^{\mu_r}}^2c_\alpha^2(k^2c^2_{A,2}c_r^{2\alpha+1}a^{2\alpha+1}+c^2_{A,1}c_r^{2\alpha-1}a^{2\alpha-1}+(c_{A,1}+kc_{A,2})^2c_r^{2\alpha-3}a^{2\alpha-3}).
	%	\end{align}
	\begin{align}\label{V-est}
	&\sum_{m=1}^M|\tilde{A}|^2=\Oh({c_0^{\mu_r}}^2c_r^{-9}a^{-3}),\quad
	\sum_{m=1}^M|\tilde{I}_B|^2=\Oh( {c_0^{\mu_r}}^2c_\alpha^2c_r^{2\alpha-9}a^{2\alpha-3}),\quad\alpha\in(0,1)\notag\\
	&\sum_{m=1}^M|\tilde{B}_{1,m}^R|^2=\Oh(k^4{c_0^{\mu_r}}^2\lVert V^{\mathring{\mu}_r}\rVert^2_{L^\infty(\Omega)}c_r^{-5}a),
	\sum_{m=1}^M|\tilde{B}_2|^2=\Oh(k^2{c_0^{\epsilon_r}}^2\lVert U^{\mathring{\epsilon}_r}\rVert^2_{L^\infty(\Omega)}(12+9k^2c_r^2a^2)c_r^{-7}a^{-1}),\notag\\
	&\sum_{m=1}^M|\tilde{C}|^2=\Oh\left(\right.({c_0^{\mu_r}}^2\lVert V^{\mathring{\mu}_r}\rVert^2_{L^\infty(\Omega)}(32c_{\mathrm{aver}}+k^4(36c_0^4\pi^2c_r^2a^2+k^2))\notag\\
	&\qquad+\pi^2k^2c_0^2{c_0^{\epsilon_r}}^2\lVert U^{\mathring{\epsilon}_r}\rVert^2_{L^\infty(\Omega)}(4+k^2c_0^2c_r^2a^2))c_r^{-7}a^{-1}\left.\right)
	\end{align}
for positive constants $c_\alpha$, $c_{\mathrm{aver}}$ and $c_0$.	
\end{corollary}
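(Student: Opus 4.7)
The plan is to mirror the proof of Proposition \ref{lem-eq} line by line, exploiting the structural symmetry between the two equations of the Lippmann-Schwinger system \eqref{ls1} (and of the discrete algebraic system \eqref{dislinear}) under the swap $(\epsilon_r,\mu_r)\leftrightarrow(\mu_r,\epsilon_r)$ together with the sign flip $+ik\to -ik$ in front of the cross-product term. Concretely, I would begin by evaluating the second equation of \eqref{ls1} at the centers, more precisely by integrating it over the inscribed ball $S_m$ and dividing by $|S_m|$, so that the left-hand side produces $\frac{1}{|S_m|}\int_{S_m}V^{\mathring{\mu}_r}(x)\,dx$. Then I would split the integral over $\Omega$ as the sum of integrals over $\Omega_j$ for $j=1,\dots,M$ plus the integral over the ``boundary layer'' $L=\Omega\setminus\cup_{m=1}^M\Omega_m$, isolating the self-interaction term $j=m$ (which I further split into the Newtonian piece at $k=0$ and the smooth remainder $\Pi_k-\Pi_0$, and into the cross-product piece) from the far-field contributions $j\neq m$.

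In the next step I would use exactly the definition of the bounded linear operator $K^{\mu_r}$ in \eqref{KB} together with the algebraic identity $[C^{\mathrm{T}}_{\mu_r}](I-K^{\mu_r})^{-1}=[P_0^{\mu_r}]$ from Definition \ref{def2}(a) in order to absorb the singular $j=m$ part of the Newtonian operator. This is precisely what transforms the naked averaged value $\frac{1}{|S_m|}\int_{S_m}V^{\mathring{\mu}_r}\,dx$ into $(I-K^{\mu_r})\frac{1}{|S_m|}\int_{S_m}V^{\mathring{\mu}_r}\,dx$ in the comparison identity, and it allows me, after subtracting the discrete equation \eqref{dislinear} for $V_m$, to recognize the same linear algebraic operator acting on the differences indexed by $j$. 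The residual terms decompose naturally into $\tilde{A}$ (the quadrature error replacing the continuous integral by a midpoint-type Riemann sum at $z_j$), $\tilde{I}_B$ (the self-interaction error coming from replacing $V^{\mathring{\mu}_r}(z)$ by its average on $S_m$), $\tilde{B}^R_{1,m}$ (the smooth remainder of $\Pi_k-\Pi_0$), $\tilde{B}_{2,m}$ (the self cross-product piece driven by $U^{\mathring{\epsilon}_r}$), and $\tilde{C}$ (the boundary-layer contribution of Remark \ref{layer}). The $\Oh(c_r a)$ term on the right-hand side arises, exactly as in Proposition \ref{lem-eq}, from replacing $H^{in}(x)$ for $x\in S_m$ by $H^{in}(z_m)$, using analyticity of $H^{in}$ and $\mathrm{diam}(S_m)=\Oh(\delta)=\Oh(c_r a)$.

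For the quantitative estimates in \eqref{V-est}, I would repeat verbatim the arguments that establish \eqref{group-est} in the proof of Proposition \ref{lem-eq}: Taylor expansion of $\Pi_k(x,z)$ at $z=z_j$ combined with the counting Lemma \ref{conting1} for $\tilde{A}$; the $C^{0,\alpha}$ regularity of $V^{\mathring{\mu}_r}$ provided by Theorem \ref{thm-regu} (applicable since the hypothesis \eqref{cond} is assumed) to control $|V^{\mathring{\mu}_r}(z)-\frac{1}{|S_m|}\int_{S_m}V^{\mathring{\mu}_r}|\lesssim c_\alpha (c_r a)^\alpha$ for $\tilde{I}_B$; the smoothness estimate $|\d^\beta(\Phi_k-\Phi_0)|=\Oh(k^{1+\beta})$ from \eqref{phi} for $\tilde{B}^R_{1,m}$; the Calder\'on-Zygmund boundedness of the Newtonian operator for $\tilde{B}_{2,m}$; and finally Remark \ref{layer} (the volume of $L$ is $\Oh(\delta)$) together with the $L^\infty$ bounds on $U^{\mathring{\epsilon}_r}$ and $V^{\mathring{\mu}_r}$ for $\tilde{C}$. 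Because the only structural difference with Proposition \ref{lem-eq} is the interchange of $[C_{\epsilon_r}^{\mathrm{T}}]\leftrightarrow[C_{\mu_r}^{\mathrm{T}}]$ (hence $c_0^{\epsilon_r}\leftrightarrow c_0^{\mu_r}$ in the prefactors) and a harmless sign flip (which does not affect absolute-value estimates), the resulting bounds are exactly the ones displayed in \eqref{V-est}.

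I do not anticipate any genuinely new difficulty; the only step that requires some care is the bookkeeping of which of the two fields $U^{\mathring{\epsilon}_r}$, $V^{\mathring{\mu}_r}$ appears in each self-interaction term, since $\tilde{B}_{2,m}$ now depends on $U^{\mathring{\epsilon}_r}$ (via the cross-product coming from the magnetic equation), whereas $\tilde{I}_B$, $\tilde{B}^R_{1,m}$ depend on $V^{\mathring{\mu}_r}$. Once this assignment is made consistently, each estimate transcribes from \eqref{group-est} to \eqref{V-est} by the swap rule above, and the proof is complete.
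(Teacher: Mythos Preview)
Your proposal is correct and is precisely the approach taken in the paper: the authors do not give a separate proof of Corollary~\ref{cor-v} but simply remark that it follows ``following the similar arguments in Proposition~\ref{lem-eq},'' i.e., by the symmetry swap $(\epsilon_r,U^{\mathring{\epsilon}_r},U_m,E^{in})\leftrightarrow(\mu_r,V^{\mathring{\mu}_r},V_m,H^{in})$ with the sign change $+ik\to-ik$ that you describe. One small attribution detail: in the paper the Calder\'on--Zygmund inequality is invoked for $I_B$ (and hence $\tilde I_B$), whereas $B_{2,m}$ (and hence $\tilde B_{2,m}$) is handled by the elementary pointwise bound $|\nabla\Phi_k(x,z)|\le\frac{1}{4\pi|x-z|}\bigl(\frac{1}{|x-z|}+k\bigr)$ and a direct spherical-coordinate integration; this does not affect the validity of your outline.
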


In the following proposition, on the basis of Proposition \ref{lem-eq}, we further present an $\ell_2$- estimate for  $\{(I-K^{\epsilon_r})\frac{1}{|S_m|}\int_{S_m}U^{\mathring{\epsilon}_r}(x)\,dx-U_m\}_{m=1}^M$.

\begin{proposition}\label{l2-esti}
	Following the same notations and assumptions in Proposition \ref{lem-eq}, there holds the $\ell_2$-esimate associated with $U^{\mathring{\epsilon}_r}$ and $U_m$ as 
	\begin{equation}\notag
		\left(\sum_{m=1}^M\big|(I-K^{\epsilon_r})\frac{1}{|S_m|}\int_{S_m}U^{\mathring{\epsilon}_r}(x)\,dx-U_m\big|^2\right)^{\frac{1}{2}}=\Oh\left(\lambda^+_{(\epsilon_r,\mu_r)}c_0^{\epsilon_r,-1}(c_0^{\epsilon_r}+c_0^{\mu_r})c_r^{-\frac{9}{2}}a^{-\frac{3}{2}}\right),
	\end{equation}
	where $\lambda^+_{(\epsilon_r,\mu_r)}, c_0^{\epsilon_r,-1}, c_0^{\epsilon_r}$ and $c_0^{\mu_r}$ are the positive constants given by \eqref{lamda+} and \eqref{notation}.
\end{proposition}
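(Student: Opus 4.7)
The plan is to recognize that the error vectors $W_m := (I-K^{\epsilon_r})\frac{1}{|S_m|}\int_{S_m}U^{\mathring{\epsilon}_r}(x)\,dx - U_m$, together with their magnetic analogues $\tilde{W}_m := (I-K^{\mu_r})\frac{1}{|S_m|}\int_{S_m}V^{\mathring{\mu}_r}(x)\,dx - V_m$, are themselves solutions of a Foldy--Lax-type linear algebraic system of \emph{precisely the same form} as \eqref{dislinear}; the only difference is that $E^{in}(z_m)$ and $H^{in}(z_m)$ are replaced by the residuals $\mathcal{R}_m := \Oh(c_r a) + A + I_B + B^R_{1,m} + B_{2,m} + C$ and the analogous $\tilde{\mathcal{R}}_m$. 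This is exactly the content of Proposition \ref{lem-eq} paired with Corollary \ref{cor-v}. The problem thus reduces to applying the invertibility estimate for that system supplied by Proposition \ref{lem-dis}.

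To turn this into a quantitative bound, I would invoke \eqref{dis-inver0}. Since that estimate is stated for $\R_m^{\epsilon_r} = [P_{D_m}^{\epsilon_r}]U_m$ and the system for $(W_m,\tilde{W}_m)$ has the identical left-hand side, the same linear algebra yields
\begin{equation*}
\Big(\sum_{m=1}^M \big|[P_{D_m}^{\epsilon_r}]W_m\big|^2\Big)^{1/2}
\leq \frac{9\lambda^+_{(\epsilon_r,\mu_r)}a^3}{8}\Big(\big(\sum_m|\mathcal{R}_m|^2\big)^{1/2}+\tfrac{1}{3}\big(\sum_m|\tilde{\mathcal{R}}_m|^2\big)^{1/2}\Big).
\end{equation*}
Using the scaling $[P_{D_m}^{\epsilon_r}]^{-1} = a^{-3}[P_0^{\epsilon_r}]^{-1}$ together with $\|[P_0^{\epsilon_r}]^{-1}\|_{L^\infty(\Omega)}=c_0^{\epsilon_r,-1}$ from \eqref{notation}, the $a^3$ factor cancels and I obtain
\begin{equation*}
\Big(\sum_m|W_m|^2\Big)^{1/2}
\lesssim \lambda^+_{(\epsilon_r,\mu_r)}\,c_0^{\epsilon_r,-1}\Big(\big(\sum_m|\mathcal{R}_m|^2\big)^{1/2}+\big(\sum_m|\tilde{\mathcal{R}}_m|^2\big)^{1/2}\Big).
\end{equation*}
The invertibility condition $c_r \geq 3k\lambda^+_{(\epsilon_r,\mu_r)}$ demanded here is already guaranteed by Assumption III.

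Next I will bound the two $\ell_2$-norms on the right using the six term-by-term estimates collected in \eqref{group-est} and \eqref{V-est}. Comparing the powers of $c_r$ and $a$ shows that, in the regime $a\to 0$ with $c_r$ bounded, the dominant contribution comes from $A$: its $\ell_2$-norm is of order $c_0^{\epsilon_r}c_r^{-9/2}a^{-3/2}$ (respectively $c_0^{\mu_r}c_r^{-9/2}a^{-3/2}$ for $\tilde{A}$), whereas the remaining pieces $I_B$, $B^R_{1,m}$, $B_{2,m}$, $C$ are all of strictly lower order in $a$. The cumulative contribution of the $\Oh(c_r a)$ remainder, after Cauchy--Schwarz over the $M=O(c_r^{-3}a^{-3})$ particles, is $O(c_r^{-1/2}a^{-1/2})$, which is likewise absorbed by the $A$-term for small $a$. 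Substituting these bounds into the previous display yields exactly $\bigl(\sum_m |W_m|^2\bigr)^{1/2} = \Oh\bigl(\lambda^+_{(\epsilon_r,\mu_r)}c_0^{\epsilon_r,-1}(c_0^{\epsilon_r}+c_0^{\mu_r})c_r^{-9/2}a^{-3/2}\bigr)$.

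The main difficulty is not conceptual---the linear-algebraic reduction to the system of Proposition \ref{lem-dis} is clean---but rather bookkeeping: one must carefully track the powers of $c_r$ and $a$ in each of the six contributions to $\mathcal{R}_m$ to confirm that $A$ genuinely dominates, so that the advertised rate is achieved and not degraded by a slower-decaying subdominant piece. This is a routine though tedious verification given the estimates in \eqref{group-est}--\eqref{V-est}.
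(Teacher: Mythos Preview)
Your proposal is correct and follows essentially the same approach as the paper: recognize via Proposition \ref{lem-eq} and Corollary \ref{cor-v} that the error vectors $(W_m,\tilde W_m)$ satisfy the same Foldy--Lax system \eqref{dislinear} with the residuals $\mathcal{R}_m,\tilde{\mathcal{R}}_m$ in place of the incident data, apply the invertibility estimate \eqref{dis-inver0} (rewritten via the scaling $[P_{D_m}^{\epsilon_r}]=a^3[P_0^{\epsilon_r}]$), and then identify $A$ as the dominant contribution among the residual terms in \eqref{group-est}--\eqref{V-est}. The paper's proof proceeds identically, including the explicit bookkeeping you describe as routine.
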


\begin{proof}
	Recall the discrete linear algebraic system \eqref{dislinear} with respect to $(U_m, V_m)_{m=1}^M$ and the estimates for $(\R_m^{\epsilon_r},\Q_m^{\mu_r})_{m=1}^M$ given in Proposition \ref{lem-dis}. Under the variable substitution \eqref{v-change}, where we know that
	\begin{equation}\notag
		U_m=a^{-3}[P_0^{\epsilon_r}]^{-1}\R_m^{\epsilon_r},\quad V_m=a^{-3}[P_0^{\mu_r}]^{-1}\Q_m^{\mu_r},
	\end{equation}
	we can rewrite the estimate \eqref{dis-inver0} as 
	\begin{align}\label{inver-est}
		(\sum_{m=1}^M|U_m|^2)^{\frac{1}{2}}&\leq \frac{9\lambda^+_{(\epsilon_r,\mu_r)}}{8}c_0^{\epsilon_r,-1}\left(\frac{1}{3} \left( \sum_{m=1}^M |H^{in}(z_m)|^2\right)^\frac{1}{2}+\left( \sum_{m=1}^M|E^{in}(z_m)|^2\right)^{\frac{1}{2}}\right),\notag\\
		(\sum_{m=1}^M|V_m|^2)^{\frac{1}{2}}&\leq \frac{9\lambda^+_{(\epsilon_r,\mu_r)}}{8}c_0^{\mu_r,-1}\left( \left( \sum_{m=1}^M |H^{in}(z_m)|^2\right)^\frac{1}{2}+\frac{1}{3}\left( \sum_{m=1}^M|E^{in}(z_m)|^2\right)^{\frac{1}{2}}\right).
	\end{align}
	Based on the $\ell_2$-estimate for $U_m$ in \eqref{inver-est}, by comparing with \eqref{main-est1} and \eqref{dislinear}, the following $\ell_2$-estimate for $((I-K^{\epsilon_r})\frac{1}{|S_m|}\int_{S_m}U^{\mathring{\epsilon}_r}(x)\,dx-U_m)_{m=1}^M$ holds
	\begin{align}\notag
		&\left( \sum_{m=1}^M\big| (I-K^{\epsilon_r})\frac{1}{|S_m|}\int_{S_m} U^{\mathring{\epsilon}_r}(x)\,dx-U_m\big|^2 \right)^{\frac{1}{2}}\leq \frac{9\lambda^+_{(\epsilon_r,\mu_r)}}{8}c_0^{\epsilon_r,-1}((\sum_{m=1}^M|\Oh(c_ra)+A+I_B\notag\\
		&+B^R_{1,m}+B_{2,m}+C|^2)^{\frac{1}{2}}+(\sum_{m=1}^M|\Oh(c_ra)+\tilde{A}+\tilde{I}_B+\tilde{B}^R_{1,m}+\tilde{B}_{2,m}+\tilde{C}|^2)^{\frac{1}{2}}).\notag
	\end{align}
	With the help of the estimates \eqref{group-est} in Proposition \ref{lem-eq} and \eqref{V-est} in Corollary \ref{cor-v}, we can further obtain that for sufficiently small $a$,
	\begin{align}\notag
		&\left( \sum_{m=1}^M\big| (I-K^{\epsilon_r})\frac{1}{|S_m|}\int_{S_m} U^{\mathring{\epsilon}_r}(x)\,dx-U_m\big|^2 \right)^{\frac{1}{2}}\leq \frac{9\lambda^+_{(\epsilon_r,\mu_r)}}{8}c_0^{\epsilon_r,-1}\Oh(d_0a^{-\frac{1}{2}}+d_1c_r^{-\frac{9}{2}}a^{-\frac{3}{2}}\notag\\
		&+d_2a^{\alpha-\frac{3}{2}}+d_3a^{\frac{1}{2}})=\Oh(\lambda^+_{(\epsilon_r,\mu_r)}c_0^{\epsilon_r,-1}d_1c_r^{-\frac{9}{2}}a^{-\frac{3}{2}}),\quad 0<\alpha<1,\notag
	\end{align}
	where
	\begin{align}\notag
		d_0&=c_r^{-\frac{1}{2}}+[k(12+9k^2c_r^2a^2)^{\frac{1}{2}}+(32c_{\mathrm{aver}}+k^4(36c_0^4\pi^2c_r^2a^2+k^2))^{\frac{1}{2}}\notag\\
		&+kc_0(2+kc_0c_ra)](c_0^{\epsilon_r}\lVert U^{\mathring{\epsilon}_r}\rVert_{L^\infty(\Omega)}+c_0^{\mu_r}\lVert V^{\mathring{\mu}_r}\rVert_{L^{\infty}(\Omega)})c_r^{-\frac{7}{2}},\notag\\
		d_1&=c_0^{\epsilon_r}+c_0^{\mu_r},\ d_2=c_\alpha(c_0^{\epsilon_r}+c_0^{\mu_r})c_r^{\alpha-\frac{9}{2}},\ 
		d_3=k^2(c_0^{\epsilon_r}\lVert U^{\mathring{\epsilon}_r}\rVert_{L^\infty(\Omega)}+c_0^{\mu_r}\lVert V^{\mathring{\mu}_r}\rVert_{L^{\infty}(\Omega)})c_r^{-\frac{5}{2}}.\notag
	\end{align}
\end{proof}

\begin{corollary}\label{Vl2-est}
	Following the same notations and assumptions in Corollary \ref{cor-v}, there holds the $\ell_2$-esimate associated with $V^{\mathring{\mu}_r}$ and $V_m$ as 
	\begin{equation}\notag
	\left(\sum_{m=1}^M\big|(I-K^{\mu_r})\frac{1}{|S_m|}\int_{S_m}V^{\mathring{\mu}_r}(x)\,dx-V_m\big|^2\right)^{\frac{1}{2}}=\Oh\left(\lambda^+_{(\epsilon_r,\mu_r)}c_0^{\mu_r,-1}(c_0^{\epsilon_r}+c_0^{\mu_r})c_r^{-\frac{9}{2}}a^{-\frac{3}{2}}\right),
	\end{equation}
	where $\lambda^+_{(\epsilon_r,\mu_r)}, c_0^{\mu_r,-1}, c_0^{\epsilon_r}$ and $c_0^{\mu_r}$ are the positive constants given by \eqref{lamda+} and \eqref{notation}.
\end{corollary}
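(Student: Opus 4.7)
The plan is to mirror the proof of Proposition \ref{l2-esti} line by line, swapping the roles of $(U^{\mathring{\epsilon}_r}, U_m, [P_0^{\epsilon_r}])$ and $(V^{\mathring{\mu}_r}, V_m, [P_0^{\mu_r}])$, and using Corollary \ref{cor-v} in place of Proposition \ref{lem-eq}. First I would rewrite the second line of the discrete inversion estimate \eqref{inver-est} for $V_m$, obtained by inserting the substitution $V_m=a^{-3}[P_0^{\mu_r}]^{-1}\Q_m^{\mu_r}$ into \eqref{dis-inver0}, so that
\begin{equation}\notag
\Big(\sum_{m=1}^M|V_m|^2\Big)^{1/2}\le \frac{9\lambda^+_{(\epsilon_r,\mu_r)}}{8}c_0^{\mu_r,-1}\Big(\big(\sum_{m=1}^M|H^{in}(z_m)|^2\big)^{1/2}+\tfrac{1}{3}\big(\sum_{m=1}^M|E^{in}(z_m)|^2\big)^{1/2}\Big).
\end{equation}
Observe that the linear system in Corollary \ref{cor-v} for the vector $\big((I-K^{\mu_r})\tfrac{1}{|S_m|}\int_{S_m}V^{\mathring\mu_r}\,dx-V_m\big)_m$ has exactly the same left-hand side structure as \eqref{dislinear} (with $[P_0^{\mu_r}]$ and $[P_0^{\epsilon_r}]$ in the roles played by $[P_0^{\epsilon_r}]$ and $[P_0^{\mu_r}]$ in the proof of Proposition \ref{l2-esti}), so the invertibility and $\ell_2$-bound inherited from \eqref{dis-inver0} apply in the same way to this residual vector, with ``incident data" now replaced by the five error terms $\tilde{A},\tilde{I}_B,\tilde{B}^R_{1,m},\tilde{B}_{2,m},\tilde{C}$ plus the $\Oh(c_ra)$ remainder.

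Next I would invoke the estimates \eqref{V-est} from Corollary \ref{cor-v} together with the symmetric estimates \eqref{group-est} from Proposition \ref{lem-eq} (both are needed because the coupling of the $V$-residual still contains a cross term involving the $U^{\mathring\epsilon_r}$--$U_m$ residuals through $\tilde{B}_{2,m}$ and $\tilde{C}$). Combining, we get
\begin{align}\notag
&\Big(\sum_{m=1}^M\big|(I-K^{\mu_r})\tfrac{1}{|S_m|}\int_{S_m}V^{\mathring\mu_r}(x)\,dx-V_m\big|^2\Big)^{1/2}\\\notag
&\qquad\le \frac{9\lambda^+_{(\epsilon_r,\mu_r)}}{8}c_0^{\mu_r,-1}\Big[\big(\sum_{m=1}^M|\Oh(c_ra)+\tilde A+\tilde I_B+\tilde B^R_{1,m}+\tilde B_{2,m}+\tilde C|^2\big)^{1/2}\\\notag
&\qquad\qquad+\big(\sum_{m=1}^M|\Oh(c_ra)+A+I_B+B^R_{1,m}+B_{2,m}+C|^2\big)^{1/2}\Big].
\end{align}
Using the dominant-term analysis from Proposition \ref{l2-esti}, the $\tilde A$-contribution of order $c_0^{\mu_r}c_r^{-9/2}a^{-3/2}$ and the $A$-contribution of order $c_0^{\epsilon_r}c_r^{-9/2}a^{-3/2}$ outweigh the others (of order $a^{-1/2}$, $a^{\alpha-3/2}$ with $\alpha\in(0,1)$, and $a^{1/2}$) as $a\to 0$.

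Collecting the leading terms produces the announced bound
\begin{equation}\notag
\Oh\!\left(\lambda^+_{(\epsilon_r,\mu_r)}c_0^{\mu_r,-1}(c_0^{\epsilon_r}+c_0^{\mu_r})c_r^{-9/2}a^{-3/2}\right).
\end{equation}
I expect no genuine obstacle here: the argument is essentially a transcription of the one in Proposition \ref{l2-esti}, and the only point requiring slight care is keeping track of the cross terms $\tilde B_{2,m}$ and $\tilde C$ in the $V$-system, which involve $U^{\mathring\epsilon_r}$ rather than $V^{\mathring\mu_r}$; this is precisely why both sets of estimates \eqref{group-est} and \eqref{V-est} enter the final summation, and why both $c_0^{\epsilon_r}$ and $c_0^{\mu_r}$ appear (symmetrically) in the leading constant.
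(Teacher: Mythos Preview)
Your proposal is correct and mirrors exactly what the paper intends: Corollary \ref{Vl2-est} is stated without proof because it is the symmetric analogue of Proposition \ref{l2-esti}, obtained by swapping the roles of $(U,\epsilon_r)$ and $(V,\mu_r)$ and invoking Corollary \ref{cor-v} in place of Proposition \ref{lem-eq}. One small imprecision: the reason both error sums $(A,\ldots,C)$ and $(\tilde A,\ldots,\tilde C)$ enter the bound is not that $\tilde B_{2,m}$ and $\tilde C$ happen to contain $U^{\mathring\epsilon_r}$, but that the \emph{left-hand side} of the residual system in Corollary \ref{cor-v} couples the $V$-residual to the $U$-residual via the $-ik\nabla\Phi_k\times[P_0^{\epsilon_r}](\cdots)$ term, so the invertibility estimate \eqref{dis-inver0} must be applied to the full coupled system with both right-hand sides; your displayed inequality is nonetheless correct.
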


Now, with the help of Proposition \ref{lem-eq} and Proposition \ref{l2-esti}, we are ready to give the proof of our main result, i.e. Theorem \ref{main-1}. 

\begin{proof}[Proof of Theorem \ref{main-1}]
	Recall the Foldy-Lax approximation \eqref{dis-far0} presented in Proposition \ref{lem-dis}. With variable substitution \eqref{v-change}, we can rewrite \eqref{dis-far0} as
	\begin{align}\label{far-dis}
		E^{\infty}(\hat{x})=\sum_{m=1}^{M}\left(\frac{k^2}{4\pi}e^{-i k \hat{x}\cdot z_m}\hat{x}\times (a^3[P_0^{\epsilon_r}]U_m\times \hat{x})+\frac{i k}{4\pi}e^{-i k \hat{x}\cdot z_m}\hat{x}\times a^3[P_0^{\mu_r}]V_m\right)\notag\\
		+\Oh(k c_\infty(\frac{1}{c_{\infty}^{\epsilon-}}+\frac{1}{c_{\infty}^{\mu-}})c_r^{-7}).
	\end{align}
	
	Meanwhile, the far-field corresponding to the Lippmann-Schwinger equation \eqref{ls1} associated with the equivalent medium with respect to the constant tensors $c_r^{-3}[C^{\mathrm{T}}_{\epsilon_r}]$ and $c_r^{-3}[C^{\mathrm{T}}_{\mu_r}]$ is formulated by
	\begin{equation}\label{far-int0}
		E^{\infty}_{\mathrm{eff}}(\hat{x})=\int_{\Omega}\left( \frac{k^2}{4\pi}e^{-ik\hat{x}\cdot z}\hat{x}\times (c_r^{-3}[C^{\mathrm{T}}_{\epsilon_r}]U^{\mathring{\epsilon}_r}(z)\times\hat{x})+\frac{ik}{4\pi}e^{-ik\hat{x}\cdot z}\hat{x}\times c_r^{-3}[C^{\mathrm{T}}_{\mu_r}]V^{\mathring{\mu}_r}(z)\right)\,dz.
%		E^{\infty}_{\mathrm{eff}}(\hat{x})=\int_{\Omega}\left(\right.\frac{k^2}{4\pi}e^{-i k\hat{x}\cdot z}\hat{x}\times c_r^{-3}[P_0^{\epsilon_r}](I-K_B^{\epsilon_r})U^{\mathring{\epsilon}_r}(z)\times\hat{x}\notag\\
%		+\frac{i k}{4\pi}e^{-i k \hat{x}\cdot z}\hat{x}\times(c_r^{-3}[P_0^{\mu_r}](I-K_B^{\mu_r})V^{\mathring{\mu}_r}(z))\left.\right)\,dz.
	\end{equation}
	Since from \eqref{exp-C1}, where we know that 
	\begin{equation}\notag
		[C^\mathrm{T}_{\epsilon_r}]=(I-c_r^{-3}[P_0^{\epsilon_r}]K^0)^{-1}[P_0^{\epsilon_r}],
	\end{equation}
	it is easy to see the $I-c_r^{-3}[P_0^{\epsilon_r}]K^0$ is invertible for sufficiently large $c_r$ and there holds the following equation by Born series
	\begin{equation}\label{born}
		(I-c_r^{-3}[P_0^{\epsilon_r}]K^0)^{-1}=I+c_r^{-3}[P_0^{\epsilon_r}]K^0+\sum_{t=2}^\infty\left(c_r^{-3}[P_0^{\epsilon_r}]K^0\right)^t.
	\end{equation}
	Similar for $[C^\mathrm{T}_{\mu_r}]$, we have 
	\begin{equation}\label{born2}
		(I-c_r^{-3}[P_0^{\mu_r}]K^0)^{-1}=I+c_r^{-3}[P_0^{\mu_r}]K^0+\sum_{t=2}^\infty\left(c_r^{-3}[P_0^{\mu_r}]K^0\right)^t.
	\end{equation}
	Substituting \eqref{born} and \eqref{born2} into \eqref{far-int0}, we can know that
	\begin{align}\notag
		E^\infty_{\mathrm{eff}}(\hat{x})&=\int_{\Omega}\left(\right.\frac{k^2}{4\pi}e^{-ik \hat{x}\cdot z}\hat{x}\times[c_r^{-3}(I+c_r^{-3}[P_0^{\epsilon_r}]K^0+\sum_{t=2}^\infty\left(c_r^{-3}[P_0^{\epsilon_r}]K^0\right)^t)[P_0^{\epsilon_r}]U^{\mathring{\epsilon_r}}(z)\times\hat{x}]\notag\\
		&+\frac{ik}{4\pi}e^{-ik \hat{x}\cdot z}\hat{x}\times[c_r^{-3}(I+c_r^{-3}[P_0^{\mu_r}]K^0+\sum_{t=2}^\infty\left(c_r^{-3}[P_0^{\mu_r}]K^0\right)^t)[P_0^{\mu_r}]V^{\mathring{\mu_r}}(z)]
		\left.\right)\,dz,\notag
	\end{align}
	where for simplification, we write
	\begin{align}\notag
		&c_r^{-3}(I+c_r^{-3}[P_0^{\epsilon_r}]K^0+\sum_{t=2}^\infty\left(c_r^{-3}[P_0^{\epsilon_r}]K^0\right)^t)[P_0^{\epsilon_r}]\notag\\
		&=c_r^{-3}[P_0^{\epsilon_r}]+c_r^{-6}[P_0^{\epsilon_r}]K^0[P_0^{\epsilon_r}]+\Oh\left(c_r^{-9}[P_0^{\epsilon_r}]K^0[P_0^{\epsilon_r}]\right)\notag.
	\end{align}
	Similar expression works for $[C_{\mu_r}^\mathrm{T}]$. Therefore, $E^\infty_{\mathrm{eff}}(\hat{x})$ possesses the form
	\begin{align}\label{born-far}
		E^\infty_{\mathrm{eff}}(\hat{x})&=\int_{\Omega}\left(\right.\frac{k^2}{4\pi}e^{-ik\hat{x}\cdot z}\hat{x}\times(c_r^{-3}[P_0^{\epsilon_r}]+c_r^{-6}[P_0^{\epsilon_r}]K^0[P_0^{\epsilon_r}]+\Oh\left(c_r^{-9}[P_0^{\epsilon_r}]K^0[P_0^{\epsilon_r}]\right))U^{\mathring{\epsilon_r}}(z)\times\hat{x}\notag\\
		&+\frac{ik}{4\pi}e^{-ik\hat{x}\cdot z}\hat{x}\times(c_r^{-3}[P_0^{\mu_r}]+c_r^{-6}[P_0^{\mu_r}]K^0[P_0^{\mu_r}]+\Oh\left(c_r^{-9}[P_0^{\mu_r}]K^0[P_0^{\mu_r}]\right))V^{\mathring{\mu_r}}(z)
		\left.\right)\,dz.
	\end{align}	
	Following similar arguments, the effective electric permittivity $\mathring{\epsilon_r}$ and the magnetic permeability $\mathring{\mu_r}$ in \eqref{new-eu} can be represented as 
	\begin{align}\label{born-permi}
		\mathring{\epsilon_r}&=I+c_r^{-3}[P_0^{\epsilon_r}]+c_r^{-6}[P_0^{\epsilon_r}]K^0[P_0^{\epsilon_r}]+\Oh\left(c_r^{-9}[P_0^{\epsilon_r}]K^0[P_0^{\epsilon_r}]\right),\notag\\
		\mathring{\mu_r}&=I+c_r^{-3}[P_0^{\epsilon_r}]+c_r^{-6}[P_0^{\epsilon_r}]K^0[P_0^{\epsilon_r}]+\Oh\left(c_r^{-9}[P_0^{\epsilon_r}]K^0[P_0^{\epsilon_r}]\right).
	\end{align}
	
	However, in order to compare $E^\infty$ and $E^\infty_{\mathrm{eff}}$ more directly and for the convenience of calculations, we apply the following form of $E^\infty_{\mathrm{eff}}$, which is derive from \eqref{far-int0} as
%	 evaluate the difference between \eqref{far-dis} and \eqref{far-int0}, under condition a) in Definition \ref{def2}, we can further know that
	\begin{align}\label{far-int1}
		E^{\infty}_{\mathrm{eff}}(\hat{x})&=\int_{\Omega}\left(\right. \frac{k^2}{4\pi}e^{-ik\hat{x}\cdot z}\hat{x}\times (c_r^{-3}[C^{\mathrm{T}}_{\epsilon_r}](I-K^{\epsilon_r})^{-1}(I-K^{\epsilon_r})U^{\mathring{\epsilon}_r}(z)\times\hat{x})\notag\\
		&+\frac{ik}{4\pi}e^{-ik\hat{x}\cdot z}\hat{x}\times c_r^{-3}[C^{\mathrm{T}}_{\mu_r}](I-K^{\mu_r})^{-1}(I-K^{\mu_r})V^{\mathring{\mu}_r}(z)\left.\right)\,dz\notag\\
		=&\int_{\Omega}\left( \frac{k^2}{4\pi}e^{-ik\hat{x}\cdot z}\hat{x}\times (c_r^{-3}[P_0^{\epsilon_r}](I-K^{\epsilon_r})U^{\mathring{\epsilon}_r}(z)\times\hat{x})+\frac{ik}{4\pi}e^{-ik\hat{x}\cdot z}\hat{x}\times c_r^{-3}[P_0^{\mu_r}](I-K^{\mu_r})V^{\mathring{\mu}_r}(z)\right)\,dz,
	\end{align}
	under condition a) in Definition \ref{def2}. 
	
	To investigate the error between $E^{\infty}(\hat{x})$ and $E^{\infty}_{\mathrm{eff}}(\hat{x})$, we subtract \eqref{far-dis} from \eqref{far-int1}, to obtain
	\begin{align}\label{ero-m}
		&E^{\infty}_{\mathrm{eff}}(\hat{x})-E^{\infty}(\hat{x})\notag\\
		&=\sum_{m=1}^M\int_{\Omega_m}\left( \frac{k^2}{4\pi}e^{-ik\hat{x}\cdot z}\hat{x}\times (c_r^{-3}[P_0^{\epsilon_r}](I-K^{\epsilon_r})U^{\mathring{\epsilon}_r}(z)\times\hat{x})-\frac{k^2}{4\pi}e^{-i k \hat{x}\cdot z_m}\hat{x}\times (c_r^{-3}[P_0^{\epsilon_r}]U_m\times \hat{x})\right)\,dz\notag\\
		&+\sum_{m=1}^M\int_{\Omega_m}\left(\frac{ik}{4\pi}e^{-ik\hat{x}\cdot z}\hat{x}\times c_r^{-3}[P_0^{\mu_r}](I-K^{\mu_r})V^{\mathring{\mu}_r}(z)-\frac{i k}{4\pi}e^{-i k \hat{x}\cdot z_m}\hat{x}\times c_r^{-3}[P_0^{\mu_r}]V_m \right)\,dz\notag\\
		&+\int_{L}\left( \frac{k^2}{4\pi}e^{-ik\hat{x}\cdot z}\hat{x}\times (c_r^{-3}[P_0^{\epsilon_r}](I-K^{\epsilon_r})U^{\mathring{\epsilon}_r}(z)\times\hat{x})+\frac{ik}{4\pi}e^{-ik\hat{x}\cdot z}\hat{x}\times c_r^{-3}[P_0^{\mu_r}](I-K^{\mu_r})V^{\mathring{\mu}_r}(z)\right)\,dz\notag\\
		&+\Oh(k c_\infty(\frac{1}{c_{\infty}^{\epsilon-}}+\frac{1}{c_{\infty}^{\mu-}})c_r^{-7})
%		:=F^\infty+G^\infty+H^\infty+\Oh(k c_\infty(\frac{1}{c_{\infty}^{\epsilon-}}+\frac{1}{c_{\infty}^{\mu-}})c_r^{-7}),
	\end{align}
	where $L=\Omega\backslash\cup_{m=1}^M \Omega_m$. 
	{\bl
	Denote
	\begin{equation}\notag
		F^\infty:=\sum_{m=1}^M\int_{\Omega_m}\left( \frac{k^2}{4\pi}e^{-ik\hat{x}\cdot z}\hat{x}\times (c_r^{-3}[P_0^{\epsilon_r}](I-K^{\epsilon_r})U^{\mathring{\epsilon}_r}(z)\times\hat{x})-\frac{k^2}{4\pi}e^{-i k \hat{x}\cdot z_m}\hat{x}\times (c_r^{-3}[P_0^{\epsilon_r}]U_m\times \hat{x})\right)\,dz,
	\end{equation}
	\begin{equation}\notag
		G^\infty:=\sum_{m=1}^M\int_{\Omega_m}\left(\frac{ik}{4\pi}e^{-ik\hat{x}\cdot z}\hat{x}\times c_r^{-3}[P_0^{\mu_r}](I-K^{\mu_r})V^{\mathring{\mu}_r}(z)-\frac{i k}{4\pi}e^{-i k \hat{x}\cdot z_m}\hat{x}\times c_r^{-3}[P_0^{\mu_r}]V_m \right)\,dz,
	\end{equation}
	and 
	\begin{equation}\notag
		H^\infty:=\int_{L}\left( \frac{k^2}{4\pi}e^{-ik\hat{x}\cdot z}\hat{x}\times (c_r^{-3}[P_0^{\epsilon_r}](I-K^{\epsilon_r})U^{\mathring{\epsilon}_r}(z)\times\hat{x})+\frac{ik}{4\pi}e^{-ik\hat{x}\cdot z}\hat{x}\times c_r^{-3}[P_0^{\mu_r}](I-K^{\mu_r})V^{\mathring{\mu}_r}(z)\right)\,dz.
	\end{equation}  }
We can rewrite $F^\infty$ as
	\begin{align}\notag
		F^{\infty}
%		=\sum_{m=1}^M\int_{\Omega_m}\left( \frac{k^2}{4\pi}e^{-ik\hat{x}\cdot z}\hat{x}\times (c_r^{-3}[P_0^{\epsilon_r}](I-K_B^{\epsilon_r})U^{\mathring{\epsilon}_r}(z)\times\hat{x})-\frac{k^2}{4\pi}e^{-i k \hat{x}\cdot z_m}\hat{x}\times (c_r^{-3}[P_0^{\epsilon_r}]U_m\times \hat{x})\right)\,dz\notag\\
		&=\sum_{m=1}^M\int_{\Omega_m}( \frac{k^2}{4\pi}e^{-ik\hat{x}\cdot z}\hat{x}\times (c_r^{-3}[P_0^{\epsilon_r}](I-K^{\epsilon_r})(U^{\mathring{\epsilon}_r}(z)-\frac{1}{|S_m|}\int_{S_m}U^{\mathring{\epsilon}_r}(y)\,dy)\times\hat{x})\,dz\notag\\
		&+\sum_{m=1}^M\int_{\Omega_m}\left(\right.\frac{k^2}{4\pi}e^{-ik\hat{x}\cdot z}\hat{x}\times (c_r^{-3}[P_0^{\epsilon_r}](I-K^{\epsilon_r}) \frac{1}{|S_m|}\int_{S_m}U^{\mathring{\epsilon}_r}(y)\,dy\times\hat{x})\notag\\
		&-\frac{k^2}{4\pi}e^{-i k \hat{x}\cdot z_m}\hat{x}\times (c_r^{-3}[P_0^{\epsilon_r}]U_m\times \hat{x})\left.\right)\,dz:=F_1^\infty+F_2^\infty.\notag
	\end{align}
	Since $I-K^{\epsilon_r}$ is invertible and $K^{\epsilon_r}$ satisfies condition b) in Definition \ref{def2}, we see that $\lVert I-K^{\epsilon_r}\rVert_{L^\infty}<2$. Combining with \eqref{U-reg}, which shows that $U^{\mathring{\epsilon_r}}\in C^{0,\alpha}(\Omega)$ for $\alpha\in(0,1)$, it is direct to obtain
	\begin{equation}\label{F1}
		|F_1^\infty|\leq \sum_{m=1}^M \frac{k^2}{4\pi}c_r^{-3}c_{P_0}^{\epsilon_r}|\Omega_m|\lVert I-K^{\epsilon_r}\rVert_{L^\infty} c_\alpha 3^{\frac{\alpha}{2}}\delta^\alpha\leq 
		3^{\frac{\alpha}{2}}\frac{k^2}{2\pi}c_r^{-3}c_{P_0}^{\epsilon_r}c_\alpha\delta^\alpha=\Oh(k^2c_{P_0}^{\epsilon_r}c_\alpha c_r^{\alpha-3}a^\alpha),
	\end{equation}
	where $c^{\epsilon_r}_{P_0}$ is defined in \eqref{notation}. 
	
	For $F_2^\infty$, since by Taylor expansion there holds 
	\begin{equation}\notag
		e^{-ik\hat{x}\cdot z}=e^{-ik\hat{x}\cdot z_m}+\sum_{n=1}^\infty(-ik\hat{x})^{2n}\frac{e^{-ik\hat{x}\cdot z_m}}{n!}(z-z_m)^n,
	\end{equation}
	we have
	\begin{align}\notag
		F_2^\infty&=\sum_{m=1}^M\int_{\Omega_m}\frac{k^2}{4\pi}e^{-ik\hat{x}\cdot z_m}\hat{x}\times(c_r^{-3}[P_0^{\epsilon_r}]((I-K^{\epsilon_r}) \frac{1}{|S_m|}\int_{S_m}U^{\mathring{\epsilon}_r}(y)\,dy-U_m)\times\hat{x})\,dz\notag\\
		&+\sum_{m=1}^M\int_{\Omega_m}\frac{k^2}{4\pi}(\sum_{n=1}^\infty (-ik\hat{x})^{2n}\frac{e^{-i k \hat{x}\cdot z_m}}{n!}(z-z_m)^n)\hat{x}\times (c_r^{-3}[P_0^{\epsilon_r}](I-K^{\epsilon_r})\frac{1}{|S_m|}\int_{S_m}U^{\mathring{\epsilon}_r}(y)\,dy\times\hat{x})\,dz\notag
%		&:=F_{21}^\infty+F_{22}^\infty\notag
	\end{align}
	{\bl Denote
	\begin{equation}\notag
		F_{21}^\infty:=\sum_{m=1}^M\int_{\Omega_m}\frac{k^2}{4\pi}e^{-ik\hat{x}\cdot z_m}\hat{x}\times(c_r^{-3}[P_0^{\epsilon_r}]((I-K^{\epsilon_r}) \frac{1}{|S_m|}\int_{S_m}U^{\mathring{\epsilon}_r}(y)\,dy-U_m)\times\hat{x})\,dz,
	\end{equation}
	and
	\begin{equation}\notag
		F_{22}^{\infty}:=\sum_{m=1}^M\int_{\Omega_m}\frac{k^2}{4\pi}(\sum_{n=1}^\infty (-ik\hat{x})^{2n}\frac{e^{-i k \hat{x}\cdot z_m}}{n!}(z-z_m)^n)\hat{x}\times (c_r^{-3}[P_0^{\epsilon_r}](I-K^{\epsilon_r})\frac{1}{|S_m|}\int_{S_m}U^{\mathring{\epsilon}_r}(y)\,dy\times\hat{x})\,dz.
	\end{equation}  }
	By utilizing Proposition \ref{l2-esti}, we know that
	\begin{align}\label{F21}		|F_{21}^\infty|&\leq\frac{k^2}{4\pi}c_r^{-3}c^{\epsilon_r}_{P_0}\sum_{m=1}^{M}\big|(I-K^{\epsilon_r}) \frac{1}{|S_m|}\int_{S_m}U^{\mathring{\epsilon}_r}(y)\,dy-U_m
		\big|\cdot|\Omega_m|\notag\\
		&\leq \frac{k^2}{4\pi}c_r^{-3}c^{\epsilon_r}_{P_0}|\Omega_m|M^{\frac{1}{2}}\left(\sum_{m=1}^{M}\big|(I-K^{\epsilon_r})\frac{1}{|S_m|}\int_{S_m}U^{\mathring{\epsilon}_r}(y\,dy-U_m)\big|^2\right)^{\frac{1}{2}}\notag\\
		&=\Oh\left( \frac{k^2}{4\pi}c_r^{-3}c^{\epsilon_r}_{P_0}\delta^{\frac{3}{2}}\lambda^+_{(\epsilon_r,\mu_r)}c_0^{\epsilon_r,-1}(c_0^{\epsilon_r}+c_0^{\mu_r})c_r^{-\frac{9}{2}}a^{-\frac{3}{2}}\right)\notag\\
		&=\Oh\left(k^2c^{\epsilon_r}_{P_0}\lambda^+_{(\epsilon_r,\mu_r)}c_0^{\epsilon_r,-1}(c_0^{\epsilon_r}+c_0^{\mu_r})c_r^{-6}\right).
	\end{align}
	For $F_{22}^\infty$, it is direct to notice that for sufficiently small $a$
	\begin{align}\label{F22}
	|F_{22}^\infty|&\leq \sum_{m=1}^{M}\frac{k^2}{4\pi}|\Omega_m|\big|\sum_{n=1}^{\infty}k^{2n}\frac{(\sqrt{3}\delta)^n}{n!}\big|c_r^{-3}c^{\epsilon_r}_{P_0}\lVert I-K^{\epsilon_r}\rVert_{L^{\infty}}\lVert U^{\mathring{\epsilon}_r}\rVert_{L^\infty(\Omega)}\notag\\
	&=\Oh\left(k^2c_r^{-3}c^{\epsilon_r}_{P_0}\lVert U^{\mathring{\epsilon}_r}\rVert_{L^\infty(\Omega)}\delta\right)=\Oh\left(k^2c^{\epsilon_r}_{P_0}\lVert U^{\mathring{\epsilon}_r}\rVert_{L^\infty(\Omega)}c_r^{-2}a\right).
	\end{align}
	Combining with \eqref{F1}, \eqref{F21} and \eqref{F22}, we can obtain that
	\begin{align}\label{est-F}
	|F^\infty|&\leq|F_1^\infty|+|F_{21}^\infty|+|F_{22}^\infty|\notag\\
	&=\Oh\left( k^2c_{P_0}^{\epsilon_r}c_\alpha c_r^{\alpha-3}a^\alpha+k^2c^{\epsilon_r}_{P_0}\lambda^+_{(\epsilon_r,\mu_r)}c_0^{\epsilon_r,-1}(c_0^{\epsilon_r}+c_0^{\mu_r})c_r^{-6}+k^2c^{\epsilon_r}_{P_0}\lVert U^{\mathring{\epsilon}_r}\rVert_{L^\infty(\Omega)}c_r^{-2}a \right)\notag\\
	&=\Oh\left( k^2c^{\epsilon_r}_{P_0}\lambda^+_{(\epsilon_r,\mu_r)}c_0^{\epsilon_r,-1}(c_0^{\epsilon_r}+c_0^{\mu_r})c_r^{-6} \right)
	\end{align}
	for $a$ small enough, where $0<\alpha<1$.
	
	Recall \eqref{ero-m} and let us consider $G^\infty$. Similar to $F^\infty$, we can write $G^\infty$ as 
	\begin{align}\notag
		&G^\infty=\sum_{m=1}^M\int_{\Omega_m}\left(\frac{ik}{4\pi}e^{-ik\hat{x}\cdot z}\hat{x}\times c_r^{-3}[P_0^{\mu_r}](I-K^{\mu_r})V^{\mathring{\mu}_r}(z)-\frac{i k}{4\pi}e^{-i k \hat{x}\cdot z_m}\hat{x}\times c_r^{-3}[P_0^{\mu_r}]V_m \right)\,dz\notag\\
		&=\sum_{m=1}^M\int_{\Omega_m}\left(\frac{ik}{4\pi}e^{-ik\hat{x}\cdot z}\hat{x}\times c_r^{-3}[P_0^{\mu_r}](I-K^{\mu_r})\left(V^{\mathring{\mu}_r}(z)-\frac{1}{|S_m|}\int_{S_m}V^{\mathring{\mu}_r}(y)\,dy\right)\right)\,dz\notag\\
		&+\sum_{m=1}^M\int_{\Omega_m}\frac{ik}{4\pi}\left(e^{-ik\hat{x}\cdot z}\hat{x}\times c_r^{-3}[P_0^{\mu_r}](I-K^{\mu_r})\frac{1}{|S_m|}\int_{S_m}V^{\mathring{\mu}_r}(y)\,dy-e^{-ik\hat{x}\cdot z_m}\hat{x}\times c_r^{-3}[P_0^{\mu_r}] V_m \right)\,dz.\notag
	\end{align}
	Denote 
	\begin{equation}\notag
		G_1^\infty=\sum_{m=1}^M\int_{\Omega_m}\left(\frac{ik}{4\pi}e^{-ik\hat{x}\cdot z}\hat{x}\times c_r^{-3}[P_0^{\mu_r}](I-K^{\mu_r})\left(V^{\mathring{\mu}_r}(z)-\frac{1}{|S_m|}\int_{S_m}V^{\mathring{\mu}_r}(y)\,dy\right)\right)\,dz,
	\end{equation}
	and 
	\begin{equation}\notag
	G_2^\infty=\sum_{m=1}^M\int_{\Omega_m}\frac{ik}{4\pi}\left(e^{-ik\hat{x}\cdot z}\hat{x}\times c_r^{-3}[P_0^{\mu_r}](I-K^{\mu_r})\frac{1}{|S_m|}\int_{S_m}V^{\mathring{\mu}_r}(y)\,dy-e^{-ik\hat{x}\cdot z_m}\hat{x}\times c_r^{-3}[P_0^{\mu_r}] V_m \right)\,dz.
	\end{equation}
	By \eqref{V-reg} which shows that $V^{\mathring{\mu_r}}\in C^{0,\alpha}(\Omega)$, following the similar arguments to the estimate for $F_1^\infty$ in \eqref{F1}, we directly have
	\begin{equation}\notag
		|G_1^\infty|=\Oh(kc_{P_0}^{\mu_r}c_\alpha c_r^{\alpha-3}a^\alpha).
	\end{equation}
	Utilizing Taylor expansion for $e^{-ik\hat{x}\cdot z}$ again, $G_2^\infty$ can be reformulated as
	\begin{equation}\notag
		G_2^\infty:=G_{21}^\infty+G_{22}^\infty,
	\end{equation}
	where
	\begin{equation}\notag
		G^\infty_{21}:=\sum_{m=1}^M\int_{\Omega_m}\frac{ik}{4\pi}e^{-ik\hat{x}\cdot z_m}\hat{x}\times c_r^{-3}[P_0^{\mu_r}]\left((I-K^{\mu_r})\frac{1}{|S_m|}\int_{S_m}V^{\mathring{\mu}_r}(y)\,dy-V_m\right)\,dz,
	\end{equation}
	and 
	\begin{equation}\notag
		G^\infty_{22}:=\sum_{m=1}^M\int_{\Omega_m}\frac{ik}{4\pi}\left(\sum_{n=1}^{\infty}(-ik\hat{x})^{2n}\frac{e^{-ik\hat{x}\cdot z_m}}{n!}(z-z_m)^n\right)\hat{x}\times c_r^{-3}[P_0^{\mu_r}](I-K^{\mu_r})\frac{1}{|S_m|}\int_{S_m}V^{\mathring{\mu}_r}(y)\,dy.
	\end{equation}
	Similar to \eqref{F21} and \eqref{F22}, we can deduce from Corollary \ref{Vl2-est} that
	\begin{equation}\notag
		|G_{21}^\infty|=\Oh(kc^{\mu_r}_{P_0}\lambda^+_{(\epsilon_r,\mu_r)}c_0^{\mu_r,-1}(c_0^{\epsilon_r}+c_0^{\mu_r})c_r^{-6}),\quad
		|G_{22}^\infty|=\Oh(kc^{\mu_r}_{P_0}\lVert V^{\mathring{\mu}_r}\rVert_{L^\infty(\Omega)}c_r^{-2}a).
	\end{equation}
	Therefore, there holds
	\begin{align}\label{G-est}
		|G^\infty|&\leq |G_1^\infty|+|G_{21}^\infty|+|G_{22}^\infty|\notag\\
		&=\Oh(kc_{P_0}^{\mu_r}c_\alpha c_r^{\alpha-3}a^\alpha+kc^{\mu_r}_{P_0}\lambda^+_{(\epsilon_r,\mu_r)}c_0^{\mu_r,-1}(c_0^{\epsilon_r}+c_0^{\mu_r})c_r^{-6}+kc^{\mu_r}_{P_0}\lVert V^{\mathring{\mu}_r}\rVert_{L^\infty(\Omega)}c_r^{-2}a)\notag\\
		&=\Oh(kc^{\mu_r}_{P_0}\lambda^+_{(\epsilon_r,\mu_r)}c_0^{\mu_r,-1}(c_0^{\epsilon_r}+c_0^{\mu_r})c_r^{-6}),\quad 0<\alpha<1,
	\end{align}
	for $a$ small enough.
	
	Finally, we evaluate $H^\infty$ as follows. Recall the representation of $H^\infty$ in \eqref{ero-m} as 
	\begin{equation}\notag
		H^\infty=\int_{L}\left( \frac{k^2}{4\pi}e^{-ik\hat{x}\cdot z}\hat{x}\times (c_r^{-3}[P_0^{\epsilon_r}](I-K^{\epsilon_r})U^{\mathring{\epsilon}_r}(z)\times\hat{x})+\frac{ik}{4\pi}e^{-ik\hat{x}\cdot z}\hat{x}\times c_r^{-3}[P_0^{\mu_r}](I-K^{\mu_r})V^{\mathring{\mu}_r}(z)\right)\,dz.
	\end{equation}
	Since we know that $|L|=\Oh(\delta)$ by Remark \ref{layer}, we can directly give the estimate for $H^\infty$ as
	\begin{align}\label{H-est}
		|H^\infty|&\leq \left(\frac{k^2}{4\pi}c_r^{-3}c_{P_0}^{\epsilon_r}\lVert I-K^{\epsilon_r}\rVert_{L^\infty}\lVert U^{\mathring{\epsilon}_r}\rVert_{L^{\infty}(\Omega)}+\frac{k}{4\pi}c_r^{-3}c_{P_0}^{\mu_r}\lVert I-K^{\mu_r}\rVert_{L^\infty}\lVert V^{\mathring{\mu}_r}\rVert_{L^\infty(\Omega)}\right)\cdot|L|\notag\\
		&=\Oh\left((k^2c_{P_0}^{\epsilon_r}\lVert U^{\mathring{\epsilon}_r}\rVert_{L^{\infty}(\Omega)}+kc_{P_0}^{\mu_r}\lVert V^{\mathring{\mu}_r}\rVert_{L^\infty(\Omega)})c_r^{-2}a\right)
	\end{align}
	for sufficiently small $a$. 
	
	Substituting \eqref{est-F}, \eqref{G-est} and \eqref{H-est} into \eqref{ero-m}, we derive our main result
	\begin{equation}\label{final-err}
		E^\infty(\hat{x})-E^\infty_{\mathrm{eff}}(\hat{x})=\Oh\left(
		k\lambda^+_{(\epsilon_r,\mu_r)}(kc^{\epsilon_r}_{P_0}c_0^{\epsilon_r,-1}+c^{\mu_r}_{P_0}c_0^{\mu_r,-1})(c_0^{\epsilon_r}+c_0^{\mu_r})c_r^{-6}
%		+k c_\infty(\frac{1}{c_{\infty}^{\epsilon-}}+\frac{1}{c_{\infty}^{\mu-}})c_r^{-7}
		\right).
	\end{equation}
	Based on \eqref{final-err}, which indicates that the error estimate between $E^\infty(\hat{x})$ and $E^\infty_{\mathrm{eff}}(\hat{x})$ is of order $c_r^{-6}$, we can finalize our proof by rewriting the far-field \eqref{born-far} for the equivalent medium in terms of \eqref{final-err} as
	\begin{equation}\notag
		E^{\infty}_{\mathrm{eff}}(\hat{x})=\int_{\Omega}\left(\frac{k^2}{4\pi}e^{-i k\hat{x}\cdot z}\hat{x}\times (c_r^{-3}[P_0^{\epsilon_r}] U^{\mathring{\epsilon}_r}\times\hat{x})+\frac{i k}{4\pi}e^{-i k\hat{x}\cdot z}\hat{x}\times c_r^{-3}[P_0^{\mu_r}]V^{\mathring{\mu}_r}\right)\,dz.
	\end{equation}
	In addition, the similar representations works for $\mathring{\epsilon_r}$ and $\mathring{\mu_r}$ in \eqref{born-permi} as
	\begin{equation}\notag
		\mathring{\epsilon}_r=I+c_r^{-3}[P^{\epsilon_r}_0], \quad
		\mathring{\mu}_r=I+c_r^{-3}[P^{\mu_r}_0],
	\end{equation} 
	in terms of \eqref{final-err}.
	
	The proof is complete.
\end{proof}

\section{Proof of Proposition \ref{lem-eq}}\label{sec-prop}

Recall the discrete linear algebraic system \eqref{dislinear} and the Lippmann-Schwinger equation \eqref{ls1}. 
	Taking the average value with respect to $x$ over $S_m$ in \eqref{ls1}, we obtain
	\begin{align}\label{aver1}
	\frac{1}{|S_m|}\int_{S_m}U^{\mathring{\epsilon}_r}(x)\,dx-\int_{\Omega}(\frac{1}{|S_m|}\int_{S_m}\Pi_k(x,z)\,dx c_r^{-3}[C^{\mathrm{T}
	}_{\epsilon_r}]U^{\mathring{\epsilon}_r}(z)+ik \frac{1}{|S_m|}\int_{S_m}\nabla\Phi_k(x,z)\,dx\times\notag\\
	(c_r^{-3}[C^{\mathrm{T}}_{\mu_r}]V^{\mathring{\mu}_r}(z)))\,dz =\frac{1}{|S_m|}\int_{S_m} E^{in}(x)\,dx,
	\end{align}
	where $S_m$ is the largest ball contained in $\Omega_m$. 
	
	Combining with \eqref{aver1} and \eqref{dislinear}, we have the following equation
	\begin{align}\label{main-eq1}
	&\frac{1}{|S_m|}\int_{S_m}U^{\mathring{\epsilon}_r}(x)\,dx-a^3\sum_{j=1\atop j\neq m}^M(\Pi_k(z_m,z_j)[C^{\mathrm{T}}_{\epsilon_r}]\frac{1}{|S_j|}\int_{S_j}U^{\mathring{\epsilon}_r}(y)\,dy+ik\nabla\Phi_k(z_m,z_j)\notag\\
	&\times([C^{\mathrm{T}}_{\mu_r}]\frac{1}{|S_j|}\int_{S_j}V^{\mathring{\mu}_r}(y)\,dy))=\frac{1}{|S_m|}\int_{S_m}E^{in}(x)\,dx+\int_{\Omega}(\frac{1}{|S_m|}\int_{S_m}\Pi_k(x,z)\,dx c_r^{-3}[C^{\mathrm{T}
	}_{\epsilon_r}]U^{\mathring{\epsilon}_r}(z)+\notag\\
	&ik \frac{1}{|S_m|}\int_{S_m}\nabla\Phi_k(x,z)\,dx\times
	(c_r^{-3}[C^{\mathrm{T}}_{\mu_r}]V^{\mathring{\mu}_r}(z)))\,dz
	-a^3\sum_{j=1\atop j\neq m}^M(\Pi_k(z_m,z_j)[C^{\mathrm{T}}_{\epsilon_r}]\frac{1}{|S_j|}\int_{S_j}U^{\mathring{\epsilon}_r}(y)\,dy\notag\\
	&+ik\nabla\Phi_k(z_m,z_j)
	\times([C^{\mathrm{T}}_{\mu_r}]\frac{1}{|S_j|}\int_{S_j}V^{\mathring{\mu}_r}(y)\,dy)).
	\end{align}
	It is easy to see that by Taylor expansion of $E^{in}$ at $z_m\in \Omega_m$, there holds
	\begin{equation}\notag
	E^{in}(x)=Pe^{ik\theta\cdot x}=P(e^{ik\theta\cdot z_m}\sum_{n=0}^\infty\frac{1}{n!}(ik\theta\cdot x-ik \theta\cdot z_m)^n)=Pe^{ik\theta\cdot z_m}+\Oh(c_r a)=E^{in}(z_m)+\Oh(c_r a).
	\end{equation}
	for $a$ small enough. Therefore, \eqref{main-eq1} can be further written as 
	\begin{align}\label{main-eq2}
	&\frac{1}{|S_m|}\int_{S_m}U^{\mathring{\epsilon}_r}(x)\,dx-a^3\sum_{j=1\atop j\neq m}^M(\Pi_k(z_m,z_j)[C^{\mathrm{T}}_{\epsilon_r}]\frac{1}{|S_j|}\int_{S_j}U^{\mathring{\epsilon}_r}(y)\,dy+ik\nabla\Phi_k(z_m,z_j)\notag\\
	&\times([C^{\mathrm{T}}_{\mu_r}]\frac{1}{|S_j|}\int_{S_j}V^{\mathring{\mu}_r}(y)\,dy))=E^{in}(z_m)+\Oh(c_r a)+\frac{1}{|S_m|}\sum_{j=1\atop j\neq m}^M\int_{\Omega_j}\int_{S_m}\Pi_k(x,z)c_r^{-3}[C^{\mathrm{T}}_{\epsilon_r}]U^{\mathring{\epsilon}_r}(z)\,dx\,dz\notag\\
	&-a^3\sum_{j=1\atop j\neq m}^M\Pi_k(z_m,z_j)[C^{\mathrm{T}}_{\epsilon_r}]\frac{1}{|S_j|}\int_{S_j}U^{\mathring{\epsilon_r}}(y)\,dy
	+ik\frac{1}{|S_m|}\sum_{j=1\atop j\neq m}^M\int_{\Omega_j}\int_{S_m}\nabla\Phi_k(x,z)\times (c_r^{-3}[C^{\mathrm{T}}_{\mu_r}]V^{\mathring{\mu}_r}(z))\,dx\,dz\notag\\
	&-a^3 \sum_{j=1\atop j\neq m}^M ik \nabla\Phi_k(z_m,z_j)
	\times([C^{\mathrm{T}}_{\mu_r}]\frac{1}{|S_j|}\int_{S_j}V^{\mathring{\mu}_r}(y)\,dy)\notag\\
	&+\frac{1}{|S_m|}\int_{\Omega_m}\int_{S_m}(\Pi_k(x,z)c_r^{-3}[C^{\mathrm{T}}_{\epsilon_r}]U^{\mathring{\epsilon}_r}(z)
	+ik\nabla\Phi_k(x,z)
	\times(c_r^{-3}[C^{\mathrm{T}}_{\mu_r}]V^{\mathring{\mu}_r}(z)))\,dx\,dz\notag\\
	&+\frac{1}{|S_m|}\int_L\int_{S_m}(\Pi_k(x,z)c_r^{-3}[C^{\mathrm{T}}_{\epsilon_r}]U^{\mathring{\epsilon}_r}(z)
	+ik\nabla\Phi_k(x,z)\times(c_r^{-3}[C^{\mathrm{T}}_{\mu_r}]V^{\mathring{\mu}_r}(z)))\,dx\,dz,
	\end{align}
	where $L:=\Omega\backslash\cup_{m=1}^M\Omega_m$. Denote
	\begin{align}\label{A}
	A&:=\frac{1}{|S_m|}\sum_{j=1\atop j\neq m}^M\int_{\Omega_j}\int_{S_m}\Pi_k(x,z)c_r^{-3}[C^{\mathrm{T}}_{\epsilon_r}]U^{\mathring{\epsilon}_r}(z)\,dx\,dz-a^3\sum_{j=1\atop j\neq m}^M\Pi_k(z_m,z_j)[C^{\mathrm{T}}_{\epsilon_r}]\frac{1}{|S_j|}\int_{S_j}U^{\mathring{\epsilon}_r}(y)\,dy\notag\\
	&+ik \frac{1}{|S_m|}\sum_{j=1\atop j\neq m}^M\int_{\Omega_j}\int_{S_m}\nabla\Phi_k(x,z)\times(c_r^{-3}[C^{\mathrm{T}}_{\mu_r}]V^{\mathring{\mu}_r}(z))\,dx\,dz-a^3\sum_{j=1\atop j\neq m}^M ik\nabla\Phi_k(z_m,z_j)\notag\\
	&\times([C^{\mathrm{T}}_{\mu_r}]\frac{1}{|S_j|}\int_{S_j}V^{\mathring{\mu}_r}(y)\,dy),
	\end{align}
	\begin{equation}\label{B}
	B:=\frac{1}{|S_m|}\int_{\Omega_m}\int_{S_m}(\Pi_k(x,z)c_r^{-3}[C^{\mathrm{T}}_{\epsilon_r}]U^{\mathring{\epsilon}_r}(z)
	+ik\nabla\Phi_k(x,z)
	\times(c_r^{-3}[C^{\mathrm{T}}_{\mu_r}]V^{\mathring{\mu}_r}(z)))\,dx\,dz,
	\end{equation}
	and 
	\begin{equation}\label{C}
	C:=\frac{1}{|S_m|}\int_{L}\int_{S_m}(\Pi_k(x,z)c_r^{-3}[C^{\mathrm{T}}_{\epsilon_r}]U^{\mathring{\epsilon}_r}(z)
	+ik\nabla\Phi_k(x,z)
	\times(c_r^{-3}[C^{\mathrm{T}}_{\mu_r}]V^{\mathring{\mu}_r}(z)))\,dx\,dz.
	\end{equation}
	
	In the following discussions, we evaluate $A$, $B$ and $C$, respectively.
	
	(\romannumeral1)\emph{The estimate for $A$}.
	Denote 
	\begin{align}\notag
	A_{1,m}:&=\frac{1}{|S_m|}\sum_{j=1\atop j\neq m}^M\int_{\Omega_j}\int_{S_m}\Pi_k(x,z)c_r^{-3}[C^{\mathrm{T}}_{\epsilon_r}]U^{\mathring{\epsilon}_r}(z)\,dx\,dz-a^3\sum_{j=1\atop j\neq m}^M\Pi_k(z_m,z_j)[C^{\mathrm{T}}_{\epsilon_r}]\frac{1}{|S_j|}\int_{S_j}U^{\mathring{\epsilon}_r}(y)\,dy\notag
%	&:=A_{1,m}^0+A_{1,m}^R\notag
	\end{align} 
%	where 
	\begin{equation}\notag
	A_{1,m}^0:=\frac{1}{|S_m|}\sum_{j=1\atop j\neq m}^M\int_{\Omega_j}\int_{S_m}\Pi_0(x,z)c_r^{-3}[C^{\mathrm{T}}_{\epsilon_r}]U^{\mathring{\epsilon}_r}(z)\,dx\,dz-a^3\sum_{j=1\atop j\neq m}^M\Pi_0(z_m,z_j)[C^{\mathrm{T}}_{\epsilon_r}]\frac{1}{|S_j|}\int_{S_j}U^{\mathring{\epsilon}_r}(y)\,dy,
	\end{equation}
	and 
	\begin{align}\label{A1mr}
	A_{1,m}^R&:=\frac{1}{|S_m|}\sum_{j=1\atop j\neq m}^M\int_{\Omega_j}\int_{S_m}(\Pi_k-\Pi_0)(x,z)c_r^{-3}[C^{\mathrm{T}}_{\epsilon_r}]U^{\mathring{\epsilon}_r}(z)\,dx\,dz\notag\\
	&-a^3\sum_{j=1\atop j\neq m}^M(\Pi_k-\Pi_0)(z_m,z_j)[C^{\mathrm{T}}_{\epsilon_r}]\frac{1}{|S_j|}\int_{S_j}U^{\mathring{\epsilon}_r}(y)\,dy.
	\end{align}
	{\bl It is clear that 
	\begin{equation}\notag
		A_{1,m}=A_{1,m}^0+A_{1,m}^R.
	\end{equation}  }
	Once more, we use the splitting:
	\begin{align}\label{A10}
	&A_{1,m}^0=\frac{1}{|S_m|}\sum_{j=1\atop j\neq m}^M\int_{\Omega_j}\int_{S_m}\Pi_0(x,z)c_r^{-3}[C^{\mathrm{T}}_{\epsilon_r}](U^{\mathring{\epsilon}_r}(z)-\frac{1}{|S_j|}\int_{S_j}U^{\mathring{\epsilon}_r}(y)\,dy)\,dx\,dz\notag\\
	&+\frac{1}{|S_m|}\sum_{j=1\atop j\neq m}^M\int_{\Omega_j}\int_{S_m}\Pi_0(x,z)c_r^{-3}[C^{\mathrm{T}}_{\epsilon_r}]\frac{1}{|S_j|}\int_{S_j}U^{\mathring{\epsilon}_r}(y)\,dy\,dx\,dz-a^3\sum_{j=1\atop j\neq m}^M\Pi_0(z_m,z_j)[C^{\mathrm{T}}_{\epsilon_r}]\frac{1}{|S_j|}\int_{S_j}U^{\mathring{\epsilon}_r}(y)\,dy,
%	&:=I_{A,m}+J_{A,m}.
	\end{align}
	{\bl where for the simplification of representation, we denote
	\begin{equation}\notag
		I_{A,m}:=\frac{1}{|S_m|}\sum_{j=1\atop j\neq m}^M\int_{\Omega_j}\int_{S_m}\Pi_0(x,z)c_r^{-3}[C^{\mathrm{T}}_{\epsilon_r}](U^{\mathring{\epsilon}_r}(z)-\frac{1}{|S_j|}\int_{S_j}U^{\mathring{\epsilon}_r}(y)\,dy)\,dx\,dz,
	\end{equation}
	and
	\begin{align}\notag
		J_{A,m}&:=\frac{1}{|S_m|}\sum_{j=1\atop j\neq m}^M\int_{\Omega_j}\int_{S_m}\Pi_0(x,z)c_r^{-3}[C^{\mathrm{T}}_{\epsilon_r}]\frac{1}{|S_j|}\int_{S_j}U^{\mathring{\epsilon}_r}(y)\,dy\,dx\,dz\notag\\
		&-a^3\sum_{j=1\atop j\neq m}^M\Pi_0(z_m,z_j)[C^{\mathrm{T}}_{\epsilon_r}]\frac{1}{|S_j|}\int_{S_j}U^{\mathring{\epsilon}_r}(y)\,dy.\notag
	\end{align}  }
	Define
	\begin{equation}\notag
	F_j(z):=[C^{\mathrm{T}}_{\epsilon_r}](U^{\mathring{\epsilon}_r}(z)-\frac{1}{|S_j|}\int_{S_j}U^{\mathring{\epsilon}_r}(y)\,dy).
	\end{equation}
	Then there holds
	\begin{equation}\notag
	I_{A,m}=\frac{1}{|S_m|}\sum_{j=1\atop j\neq m}^M\int_{\Omega_j}\int_{S_m}\Pi_0(x,z)c_r^{-3}F_j(z)\,dx\,dz,
	\end{equation}
	which indicates that
	\begin{equation}\notag
	|I_{A,m}|\leq\frac{1}{|S_m|}\sum_{j=1\atop j\neq m}^M|S_m|\frac{1}{\delta_{mj}^3}c_r^{-3}\int_{\Omega_j}F_j(z)\,dz\leq c_r^{-3}\sum_{j=1\atop j\neq m}^M\frac{1}{\delta_{mj}^3}\int_{\Omega_j}F_j(z)\,dz
	\end{equation}
	for any $x\in S_m\subset\Omega_m$ and $z\in\Omega_j$, $j\neq m$.
	Since from Theorem \ref{thm-regu}, we know that $U^{\mathring{\epsilon}_r}\in C^{0,\alpha}(\Omega)$, then
	\begin{equation}\label{U-reg}
	|U^{\mathring{\epsilon}_r}(z)-U^{\mathring{\epsilon}_r}(y)|\leq c_\alpha|z-y|^\alpha,\quad\mbox{for }z,y\in\Omega, \ \alpha\in (0,1),
	\end{equation}
	where $c_{\alpha}$ is the positive H\"{o}lder constant. By substituting \eqref{U-reg} into $F_j(z)$, we obtain that
	\begin{align}\label{Fj}
	\int_{\Omega_j}|F_j(z)|\,dz&=\int_{\Omega_j}\big|[C^{\mathrm{T}}_{\epsilon_r}](U^{\mathring{\epsilon}_r}(z)-\frac{1}{|S_j|}\int_{S_j}U^{\mathring{\epsilon}_r}(y)\,dy)\big|\,dz\notag\\
	&\leq c_0^{\epsilon_r}\int_{\Omega_j}\frac{1}{|S_j|}\int_{S_j}|U^{\mathring{\epsilon}_r}(z)-U^{\mathring{\epsilon}_r}(y)|\,dy\,dz\leq c_0^{\epsilon_r}|\Omega_j|c_\alpha(\sqrt{3}\delta)^{\alpha}\leq 3^{\frac{\alpha}{2}}c_0^{\epsilon_r}c_\alpha c_r^{3+\alpha}a^{3+\alpha}
	\end{align}
	by the fact that $|\Omega_j|=\Oh(\delta^3)=\Oh(c_r^3a^3)$. Combining with \eqref{Fj} and Lemma \ref{conting1}, we derive the estimate 
	\begin{equation}\notag
	|I_{A,m}|\leq3^{\frac{\alpha}{2}}c_0^{\epsilon_r}c_\alpha c_I c_r^{-3}\delta^{\alpha}(1+\ln\delta),
	\end{equation}
	and thus
	\begin{equation}\label{Iam}
	\sum_{m=1}^M|I_{A,m}|^2\leq \delta^{-3}3^{\alpha+1}{c_0^{\epsilon_r}}^2c_\alpha^2c_I^2c_r^{-6}\delta^{2\alpha}(1+(\ln\delta)^2),
	\end{equation}
	where $M=\Oh(\delta^{-3})$ {\bl and $c_I$ is a positive constant}. Since it is known that $\ln\delta\sim\delta^{-\gamma}$ for $\gamma>0$ as $\delta\rightarrow0$, by letting $\gamma\in(0,\alpha)$, \eqref{Iam} can be further written as
	\begin{equation}\label{Iam1}
	\sum_{m=1}^M|I_{A,m}|^2\leq3^{\alpha+1}{c_0^{\epsilon_r}}^2c_\alpha^2c_I^2c_r^{2\alpha-9}a^{2\alpha-3}(1+c_r^{-2\gamma}a^{-2\gamma})
	\end{equation}
	for $0<\gamma<\alpha$.
	
	For $J_{A,m}$, since $\Pi_0(x,z)$ is harmonic, by the Mean Value theorem we have 
	\begin{equation}\notag
	\frac{1}{|S_m|}\int_{S_m}\Pi_0(x,z)\,dx=\Pi_0(z_m,z).
	\end{equation}
	Thus we have 
	\begin{equation}\notag
	J_{A,m}=\sum_{j=1\atop j\neq m}^M\int_{\Omega_j}(\Pi_0(z_m,z)-\Pi_0(z_m,z_j))c_r^{-3}[C^{\mathrm{T}}_{\epsilon_r}]\frac{1}{|S_j|}\int_{S_j}U^{\mathring{\epsilon}_r}(y)\,dy\,dz,
	\end{equation}
	where 
	\begin{equation}\notag
	|\Pi_0(z_m,z)-\Pi_0(z_m,z_j)|=|\nabla\nabla\Phi_0(z_m,z)-\nabla\nabla\Phi_0(z_m,z_j)|=\Oh(\frac{1}{\delta_{mj}^4}\delta).
	\end{equation}
	Then, based on Lemma \ref{conting1}, we can derive that
	\begin{equation}\notag
	|J_{A,m}|\leq \sum_{j=1\atop j\neq m}^M |\Omega_j|\Oh(\frac{1}{\delta_{mj}^4})\delta c_r^{-3}c_0^{\epsilon_r}\lVert U^{\mathring{\epsilon}_r}\rVert_{L^{\infty}(\Omega)}\leq \delta^{4}c_r^{-3}c_0^{\epsilon_r}\lVert U^{\mathring{\epsilon}_r}\rVert_{L^\infty(\Omega)}\Oh(\sum_{j=1\atop j\neq m}^M\frac{1}{\delta_{mj}^4})\leq c_Jc_0^{\epsilon_r}\lVert U^{\mathring{\epsilon}_r}\rVert_{L^{\infty}(\Omega)}c_r^{-3}
	\end{equation}
	for the positive constant $c_J$. Therefore, it is clear that
	\begin{equation}\label{JA}
	\sum_{m=1}^M|J_{A,m}|^2\leq \delta^{-3}c_J^2 {c_0^{\epsilon_r}}^2\lVert U^{\mathring{\epsilon}_r}\rVert^2_{L^\infty(\Omega)} c_r^{-6}\leq c_J^2 {c_0^{\epsilon_r}}^2\lVert U^{\mathring{\epsilon}_r}\rVert^2_{L^\infty(\Omega)}c_r^{-9}a^{-3}.
	\end{equation}
	Combining with \eqref{Iam1} and \eqref{JA}, we deduce that
	\begin{align}\label{A1m0}
	\sum_{m=1}^M|A^0_{1,m}|^2&\leq 2(\sum_{m=1}^M|I_{A,m}|^2+\sum_{m=1}^M|J_{A,m}|^2)\notag\\
	&\leq3^{\alpha+2}{c_0^{\epsilon_r}}^2c_\alpha^2c_I^2c_r^{2\alpha-9}a^{2\alpha-3}(1+c_r^{-2\gamma}a^{-2\gamma})+2c_J^2{c_0^{\epsilon_r}}^2\lVert U^{\mathring{\epsilon}_r}\rVert^2_{L^\infty(\Omega)}c_r^{-9}a^{-3}
	\end{align}
	for $0<\gamma<\alpha<1$.
	
	Now, let us considet $A^R_{1,m}$. In \eqref{A1mr}, since 
	\begin{align}\notag
	(\Pi_k-\Pi_0)(x,z)&=k^2I\Phi_k(x,z)+\nabla\nabla\Phi_k(x,z)-\nabla\nabla\Phi_0(x,z)\notag\\
	&=k^2I\Phi_k(x,z)+\left(\frac{-k^2}{4\pi}\int_{0}^1\frac{e^{ikt|x-z|}}{|x-z|}[I+(ikt-\frac{1}{|x-z|})\frac{(x-z)(x-z)^{\mathrm{T}}}{|x-z|}]\,dt\right)\notag
	\end{align}
	for any $x\in S_m\subset\Omega_m$ and $z\in\Omega_j$, $j\neq m$, then
	\begin{equation}\notag
	|(\Pi_k-\Pi_0)(x,z)|\leq\frac{k^2}{\delta_{mj}}+k^2(\frac{2}{\delta_{mj}}+k)=\frac{3k^2}{\delta_{mj}}+k^3.
	\end{equation}
	Hence we can know that there holds
	\begin{equation}\label{Ar1m}
	|A^R_{1,m}|\leq\frac{1}{|S_m|}\sum_{j=1\atop j\neq m}^M|S_m|\left(\frac{3k^2}{\delta_{mj}}+k^3\right)c_r^{-3}c_0^{\epsilon_r}\int_{\Omega_j}\big|U^{\mathring{\epsilon}_r}(z)-\frac{1}{|S_j|}\int_{S_j}U^{\mathring{\epsilon}_r}(y)\,dy\big|\,dz.
	\end{equation}
	By using \eqref{U-reg} and Lemma \ref{conting1}, \eqref{Ar1m} can be further written as
	\begin{equation}\notag
	|A^R_{1,m}|\leq c_r^{-3}c_0^{\epsilon_r}3^{\frac{\alpha}{2}}c_\alpha\delta^{\alpha+3}\sum_{j=1\atop j\neq m}^M\left(\frac{3k^2}{\delta_{mj}}+k^3\right)\leq 3^{\frac{\alpha}{2}}c_\alpha c_0^{\epsilon_r}c_r^{-3}\delta^{\alpha+3}(3k^2c_R\delta^{-3}+k^3\delta^{-3})
	\end{equation}
	for the positive constant $c_R$.
	Thus we can know that 
	\begin{equation}\label{Ar1m-1}
	\sum_{m=1}^M|A^R_{1,m}|^2\leq 3^{\alpha+3}c_\alpha^2{c_0^{\epsilon_r}}^2c_r^{-6}k^4(c_R^2+k^2)\delta^{2\alpha-3}\leq 3^{\alpha+3}c_\alpha^2{c_0^{\epsilon_r}}^2k^4(c_R^2+k^2)c_r^{2\alpha-9}a^{2\alpha-3}.
	\end{equation}
	Combining with \eqref{A1m0} and \eqref{Ar1m-1}, it yields that
	\begin{align}\label{A1m}
	\sum_{m=1}^M|A_{1,m}|^2&\leq2\left(\sum_{m=1}^M|A_{1,m}^0|^2+\sum_{m=1}^M|A_{1,m}^R|^2\right)\notag\\
	&\leq 3^{\alpha+3}{c_0^{\epsilon_r}}^2c_r^{-9}a^{-3}\left( c_\alpha^2c_r^{2\alpha}c_I^2a^{2\alpha}((1+c_r^{-2\gamma}a^{-2\gamma})+k^4(c_R^2+k^2))+c_J^2\lVert U^{\mathring{\epsilon}_r}\rVert^2_{L^\infty(\Omega)}\right)\notag\\
%	&+3^{\alpha+3}c_\alpha^2{c_0^{\epsilon_r}}^2k^4 c_R^2c_r^{2\alpha-5}a^{2\alpha+1}
	&=\Oh({c_0^{\epsilon_r}}^2c_r^{-9}a^{-3}),
	\end{align}
	for $0<\gamma<\alpha<1$.
	
	Recall \eqref{A}. Denote
	\begin{align}\notag
	A_{2,m}:= \frac{1}{|S_m|}\sum_{j=1\atop j\neq m}^M\int_{\Omega_j}\int_{S_m}ik\nabla\Phi_k(x,z)\times(c_r^{-3}[C^{\mathrm{T}}_{\mu_r}]V^{\mathring{\mu}_r}(z))\,dx\,dz\notag\\
	-a^3\sum_{j=1\atop j\neq m}^M ik\nabla\Phi_k(z_m,z_j)\times([C^{\mathrm{T}}_{\mu_r}]\frac{1}{|S_j|}\int_{S_j}V^{\mathring{\mu}_r}(y)\,dy).\notag
	\end{align}
	Since for any $x\in S_m\subset\Omega_m$ and $z\in\Omega_j$, $j\neq m$, there holds
	\begin{equation}\notag
	|\nabla\Phi_k(x,z)|\leq\frac{1}{4\pi}\frac{1}{\delta_{mj}}\left(\frac{1}{\delta_{mj}}+k\right),
	\end{equation}
	then 
	\begin{equation}\notag
	|A_{2,m}|\leq\frac{k}{|S_m|}\sum_{j=1\atop j\neq m}^M|S_m|\frac{1}{\delta_{mj}}\left(\frac{1}{\delta_{mj}}+k\right)c_r^{-3}c_0^{\mu_r}\int_{\Omega_j}\left( V^{\mathring{\mu}_r}(z)-\frac{1}{|S_j|}\int_{S_j}V^{\mathring{\mu}_r}(y)\,dy\right)\,dz.
	\end{equation}
	Since from Theorem \ref{thm-regu}, we have $V^{\mathring{\mu}_r}\in C^{0,\alpha}(\Omega)$, which indicates that
	\begin{equation}\label{V-reg}
	|V^{\mathring{\mu}_r}(z)-V^{\mathring{\mu}_r}(y)|\leq c_\alpha|z-y|^{\alpha}\quad\mbox{for }z,y\in \Omega,\ \alpha\in (0,1),
	\end{equation}
	where $c_\alpha$ is the same positive constant appearing in \eqref{U-reg}. Thus we know that
	\begin{align}\notag
	|A_{2,m}|&\leq k \sum_{j=1\atop j\neq m}^M\frac{1}{\delta_{mj}}\left(\frac{1}{\delta_{mj}}+k\right)c_r^{-3}c_0^{\mu_r}|\Omega_j|c_\alpha3^{\frac{\alpha}{2}}\delta^\alpha.
	\end{align}
	By Lemma \ref{conting1}, we can further obtain
	\begin{equation}\notag
	|A_{2,m}|\leq 3^{\frac{\alpha}{2}}kc_0^{\mu_r}c_\alpha c_r^{-3}\delta^{3+\alpha}(c_{A_2,1}+kc_{A_2,2})\delta^{-3}
	\leq 3^{\frac{\alpha}{2}}kc_0^{\mu_r}c_\alpha c_r^{-3}(c_{A_2,1}+kc_{A_2,2})\delta^{\alpha},
	\end{equation}
	where $c_{A_2,1}$ and $c_{A_2,2}$ are two positive constants. Therefore, we can derive that
	\begin{align}\label{A2m}
	\sum_{m=1}^M|A_{2,m}|^2&\leq \delta^{-3}3^{\alpha+1}k^2{c_0^{\mu_r}}^2c_\alpha^2c_r^{-6}(c_{A_2,1}+kc_{A_2,2})^2\delta^{2\alpha}\leq 3^{\alpha+1}k^2{c_0^{\mu_r}}^2c_\alpha^2c_r^{-6}
	(c_{A_2,1}+kc_{A_2,2})^2\delta^{2\alpha-3} \notag\\
	&\leq 3^{\alpha+1}k^2{c_0^{\mu_r}}^2c_\alpha^2(c_{A_2,1}+kc_{A_2,2})^2 c_r^{2\alpha-9}a^{2\alpha-3}=\Oh(k^2{c_0^{\mu_r}}^2c_\alpha^2c_r^{2\alpha-9}a^{2\alpha-3}).
	\end{align}
	Combining with \eqref{A1m} and \eqref{A2m}, we derive the estimate
	\begin{equation}\label{A-final}
	\sum_{m=1}^M|A|^2\leq 2\left(\sum_{m=1}^M|A_{1,m}|^2+\sum_{m=1}^M|A_{2,m}|^2\right)\leq\Oh({c_0^{\epsilon_r}}^2c_r^{-9}a^{-3}+k^2{c_0^{\mu_r}}^2c_\alpha^2c_r^{2\alpha-9}a^{2\alpha-3})=\Oh({c_0^{\epsilon_r}}^2c_r^{-9}a^{-3})
	\end{equation}
	for sufficiently small $a$ and $0<\alpha<1$.
	
	(\romannumeral2)\emph{The estimate for terms associated with $B$}.
	Next, we evaluate the term $B$ given by \eqref{B}. Denote
	\begin{align}\label{B2m-def}
	B_{1,m}:&=\frac{1}{|S_m|}\int_{\Omega_m}\int_{S_m}\Pi_k(x,z)c_r^{-3}[C^{\mathrm{T}}_{\epsilon_r}]U^{\mathring{\epsilon}_r}(z)\,dx\,dz,\notag\\
%	:=B_{1,m}^0+B_{1,m}^R\notag\\
	B_{2,m}:&=\frac{1}{|S_m|}\int_{\Omega_m}\int_{S_m}ik\nabla\Phi(x,z)
	\times(c_r^{-3}[C^{\mathrm{T}}_{\mu_r}]V^{\mathring{\mu}_r}(z))\,dx\,dz,
	\end{align}
	{\bl and further
	\begin{align}\label{B-split}
	B_{1,m}^0&:=\frac{1}{|S_m|}\int_{\Omega_m}\int_{S_m}\Pi_0(x,z)c_r^{-3}[C^{\mathrm{T}}_{\epsilon_r}]U^{\mathring{\epsilon}_r}(z)\,dx\,dz,\notag\\
	B_{1,m}^R&:=\frac{1}{|S_m|}\int_{\Omega_m}\int_{S_m}(\Pi_k-\Pi_0)(x,z)c_r^{-3}[C^{\mathrm{T}}_{\epsilon_r}]U^{\mathring{\epsilon}_r}(z)\,dx\,dz.
	\end{align}
	It is clear that $B=B_{1,m}+B_{2,m}$ and $B_{1,m}=B_{1,m}^0+B_{1,m}^R$.  }
	
	We first investigate $B_{1,m}^R$. Since for any $x\in S_m\subset\Omega_m$ and $z\in \Omega_m$, there holds
	\begin{equation}\notag
	|(\Pi_k-\Pi_0)(x,z)|\leq |k^2I\Phi_k(x,z)+\nabla\nabla\Phi_k(x,z)-\nabla\nabla\Phi_0(x,z)|\leq\frac{6k^2}{4\pi}\frac{1}{|x-z|}.
	\end{equation}
	It is easy to get 
	\begin{align}\label{B1mr-1}
	|B_{1,m}^R|&\leq\frac{k^2}{|S_m|}c_r^{-3}c_0^{\epsilon_r}\lVert U^{\mathring{\epsilon}_r}\rVert_{L^\infty(\Omega)}\int_{\Omega_m}\int_{S_m}\frac{1}{|x-z|}\,dx\,dz\notag\\
	&\leq \frac{k^2}{|S_m|}c_r^{-3}c_0^{\epsilon_r}\lVert U^{\mathring{\epsilon}_r}\rVert_{L^\infty(\Omega)}|\Omega_m|\int_{B(z,\sqrt{3}\delta)}\frac{1}{|x-z|}\,dx
	\end{align}
	by the fact that $S_m\subset\Omega_m\subset B(z,\sqrt{3}\delta)$ for any $z\in\Omega_m$. Thus by direct calculations under spherical coordinate system, we can obtain that
	\begin{equation}\notag
	|B_{1,m}^R|\leq\frac{k^2}{\frac{\pi}{6}\delta^3}c_r^{-3}c_0^{\epsilon_r}\lVert U^{\mathring{\epsilon}_r}\rVert_{L^\infty(\Omega)}\delta^3\int_{0}^{2\pi}\int_{0}^\pi\int_{0}^{\sqrt{3}\delta}\frac{1}{r}r^2\sin\theta\,dr\,d\theta\,d\phi\leq 36k^2c_0^{\epsilon_r}\lVert U^{\mathring{\epsilon}_r}\rVert_{L^\infty(\Omega)}c_r^{-1}a^2,
	\end{equation}
	and thus
	\begin{equation}\label{B1mr}
	\sum_{m=1}^M|B_{1,m}^R|^2\leq 36^2k^4{c_0^{\epsilon_r}}^2\lVert U^{\mathring{\epsilon}_r}\rVert_{L^\infty(\Omega)}^2c_r^{-5}a=\Oh(k^4{c_0^{\epsilon_r}}^2\lVert U^{\mathring{\epsilon}_r}\rVert_{L^\infty(\Omega)}^2c_r^{-5}a).
	\end{equation}
	
	To give the estimate for $B_{2,m}$, we rewrite $
	\nabla\Phi_k(x,z)=\Phi_k(x,z)\left(ik-\frac{1}{|x-z|}\right)\frac{(x-z)}{|x-z|},
	$
	which indicates that
	\begin{equation}\notag
	|\nabla\Phi_k(x,z)|\leq\frac{1}{4\pi}\frac{1}{|x-z|}\left(\frac{1}{|x-z|}+k\right)\quad\mbox{for }\ x\in S_m, z\in \Omega_m.
	\end{equation}
	Thus similar to \eqref{B1mr-1}, we have 
	\begin{align}\notag
	|B_{2,m}|&\leq\frac{k}{|S_m|}c_r^{-3}c_0^{\mu_r}\lVert V^{\mathring{\mu}_r}\rVert_{L^\infty(\Omega)}\int_{\Omega_m}\int_{S_m}\frac{1}{|x-z|}\left(\frac{1}{|x-z|}+k\right)\,dx\,dz\notag\\
	&\leq \frac{k}{|S_m|}c_r^{-3}c_0^{\mu_r}\lVert V^{\mathring{\mu}_r}\rVert_{L^\infty(\Omega)}|\Omega_m|\int_{B(z,\sqrt{3}\delta)}\left(\frac{1}{|x-z|^2}+\frac{k}{|x-z|}\right)\,dx\notag\\
	&\leq 12kc_0^{\mu_r}c_r^{-3}\lVert V^{\mathring{\mu}_r}\rVert_{L^\infty(\Omega)}(2\sqrt{3}+3k\delta)\delta,\notag
	\end{align}
	and therefore
	\begin{align}\label{B2m}
	\sum_{m=1}^M|B_{2,m}|^2&\leq 288k^2{c_0^{\mu_r}}^2c_r^{-6}\lVert V^{\mathring{\mu}_r}\rVert_{L^\infty(\Omega)}^2(12+9k^2\delta^2)\delta^{-1}\notag\\
	&\leq 288k^2{c_0^{\mu_r}}^2\lVert V^{\mathring{\mu}_r}\rVert_{L^\infty(\Omega)}^2(12+9k^2c_r^2a^2)c_r^{-7}a^{-1}\notag\\
	&=\Oh(k^2{c_0^{\mu_r}}^2\lVert V^{\mathring{\mu}_r}\rVert^2_{L^\infty(\Omega)}(12+9k^2c_r^2a^2)c_r^{-7}a^{-1}).
	\end{align}
	
	Now we focus on the term $B_{1,m}^0$ given by \eqref{B-split}. It is obvious that $B_{1,m}^0$ can be written as 
	\begin{align}\label{B1m0}
	B_{1,m}^0&=\frac{1}{|S_m|}\int_{\Omega_m}\int_{S_m}\Pi_0(x,z)c_r^{-3}[C^{\mathrm{T}}_{\epsilon_r}]\left(U^{\mathring{\epsilon}_r}(z)-\frac{1}{|S_m|}\int_{S_m}U^{\mathring{\epsilon}_r}(y)\,dy\right)\,dx\,dz\notag\\
	&+\frac{1}{|S_m|}\int_{\Omega_m}\int_{S_m}\Pi_0(x,z)c_r^{-3}[C^{\mathrm{T}}_{\epsilon_r}]\frac{1}{|S_m|}\int_{S_m}U^{\mathring{\epsilon}_r}(y)\,dy\,dx\,dz.
%	:=I_B+K^{\epsilon_r}.
	\end{align}
	{\bl Denote
	\begin{equation}\notag
		I_B:=\frac{1}{|S_m|}\int_{\Omega_m}\int_{S_m}\Pi_0(x,z)c_r^{-3}[C^{\mathrm{T}}_{\epsilon_r}]\left(U^{\mathring{\epsilon}_r}(z)-\frac{1}{|S_m|}\int_{S_m}U^{\mathring{\epsilon}_r}(y)\,dy\right)\,dx\,dz,
	\end{equation}
	and 
	\begin{equation}\notag
		K^{\epsilon_r}:=\frac{1}{|S_m|}\int_{\Omega_m}\int_{S_m}\Pi_0(x,z)c_r^{-3}[C^{\mathrm{T}}_{\epsilon_r}]\frac{1}{|S_m|}\int_{S_m}U^{\mathring{\epsilon}_r}(y)\,dy\,dx\,dz.
	\end{equation}  }
	Using \eqref{U-reg}, we can directly know that
	\begin{equation}\label{IB1}
	|I_B|\leq\frac{1}{|S_m|}c_r^{-3}c_0^{\epsilon_r}c_\alpha3^{\frac{\alpha}{2}}\delta^{\alpha}\int_{\Omega_m}\int_{S_m}\Pi_0(x,z)\,dx\,dz.
	\end{equation}
	By the H\"{o}lder and Cald\'{e}ron-Zygmund inequalities, see \cite[Theorem 9.9]{GT}, we obtain from \eqref{IB1} that {\bl
	\begin{align}\notag
	|I_B|&\leq \frac{1}{|S_m|}3^{\frac{\alpha}{2}}c_0^{\epsilon_r}c_\alpha c_r^{-3} \delta^{\alpha}\lVert \int_{\Omega_m}\Pi_0(x,z)\,dz\rVert_{L^p(S_m)}\cdot|S_m|^{\frac{1}{q}}\notag\\
	&\leq 3^{\frac{\alpha}{2}}|S_m|^{-\frac{1}{p}}c_0^{\epsilon_r}c_\alpha c_r^{-3}\delta^\alpha\lVert \int_{\mathbb{R}^3}\Pi_0(x,z)\chi_{\Omega_m}(z)\,dz\rVert_{L^p(\mathbb{R}^3)}\notag\\
	&\leq 3^{\frac{\alpha}{2}}|S_m|^{-\frac{1}{p}}c_0^{\epsilon_r}c_\alpha c_r^{-3}\delta^\alpha\lVert\chi_{\Omega_m}\rVert_{L^p(\mathbb{R}^3)}\leq 3^{\frac{\alpha}{2}}\left(\frac{6}{\pi}\right)^{\frac{1}{p}}c_0^{\epsilon_r}c_\alpha c_r^{-3}\delta^\alpha\notag
	\end{align} }
	for $p,q>1$ such that $1/p+1/q=1$ and $\alpha\in(0,1)$. Thus we can derive that
	\begin{equation}\label{IB}
	\sum_{m=1}^M|I_B|^2\leq 3^\alpha \left(\frac{6}{\pi}\right)^{\frac{2}{p}} {c_0^{\epsilon_r}}^2c_\alpha^2c_r^{-6}\delta^{2\alpha-3}=\Oh({c_0^{\epsilon_r}}^2c_\alpha^2c_r^{2\alpha-9}a^{2\alpha-3}).
	\end{equation}
	
	(\romannumeral3)\emph{The estimate for $C$}. We rewrite \eqref{C} as 
	\begin{equation}\notag
	C:=C_{1,m}^0+C_{1,m}^R+C_{2,m},
	\end{equation} 
	where
	\begin{align}\notag
	C_{1,m}^0&:=\frac{1}{|S_m|}\int_{L}\int_{S_m}\Pi_0(x,z)c_r^{-3}[C^{\mathrm{T}}_{\epsilon_r}]U^{\mathring{\epsilon}_r}(z)\,dx\,dz,\notag\\
	C_{1,m}^R&:=\frac{1}{|S_m|}\int_{L}\int_{S_m}(\Pi_k-\Pi_0)(x,z)c_r^{-3}[C^{\mathrm{T}}_{\epsilon_r}]U^{\mathring{\epsilon}_r}(z)\,dx\,dz\notag\\
	C_{2,m}&:=\frac{1}{|S_m|}\int_{L}\int_{S_m}ik\nabla\Phi_k(x,z)\times(c_r^{-3}[C^{\mathrm{T}}_{\mu_r}]V^{\mathring{\mu}_r}(z))\,dx\,dz.\notag
	\end{align}
	By the Mean Value Theorem for the harmonic function $\Pi_0(x,z)$, we have
	\begin{equation}\notag
	C_{1,m}^0=\int_{L}\Pi_0(z_m,z)c_r^{-3}[C^{\mathrm{T}}_{\epsilon_r}]U^{\mathring{\epsilon}_r}(z)\,dz
	\end{equation}
	and thus
	\begin{equation}\label{C1m0}
	\sum_{m=1}^M|C_{1,m}^0|^2\leq \sum_{m=1}^Mc_r^{-6}{c_0^{\epsilon_r}}^2\lVert U^{\mathring{\epsilon}_r}\rVert_{L^{\infty}(\Omega)}^2\left( \int_{L}\frac{1}{|z_m-z|^3}\,dz\right)^2.
	\end{equation}
	In order to evaluate the integral over $L$ in \eqref{C1m0}, we give the following way of counting the particles. 
	
	Let an arbitrary position $z_m$ be fixed. From $z_m$, we split the space into equidistant cubes $\Sigma_\ell$ of distance $\delta$. Let $F_\ell$ be the surfaces of $\Sigma_\ell$ for a fixed direction such that each $F_\ell$ supports some of the $(z_\ell)_{\ell=1, \ell\neq m}^M$. It is know from \cite[Lemma A.1]{AM} that the number of such cubes $\Sigma_\ell$ is at most of $\Oh(M^{\frac{1}{3}})$. To estimate the integral over $L:=\Omega\backslash\cup_{m=1}^M\Omega_m$, since $a$ is small enough, we can assume that any small part of $\partial\Omega$ is flat, recalling that $\partial\Omega\in C^{1,\alpha}$. We divide the region in $L$ close to the considering flat part into concentric layers such that there are at most $(2n+1)^2$ small cubes $\Omega_n$ up to the $n$-th layer intersecting with the surface, for $n=1,\cdots, M^{\frac{1}{3}}$. It is easy to see that the number of the particles located in the $n$-th layer is at most of order $[(2n+1)^2-(2n-1)^2]$ and the distance from any small cube $\Omega_n$ containing the particle to $z_m$ is larger than $(n+\ell)\delta$. We refer Fig \ref{fig:counting} for the schematic illustration.
	
	\begin{figure}[htbp]
		\centering
		\includegraphics[width=0.6\linewidth]{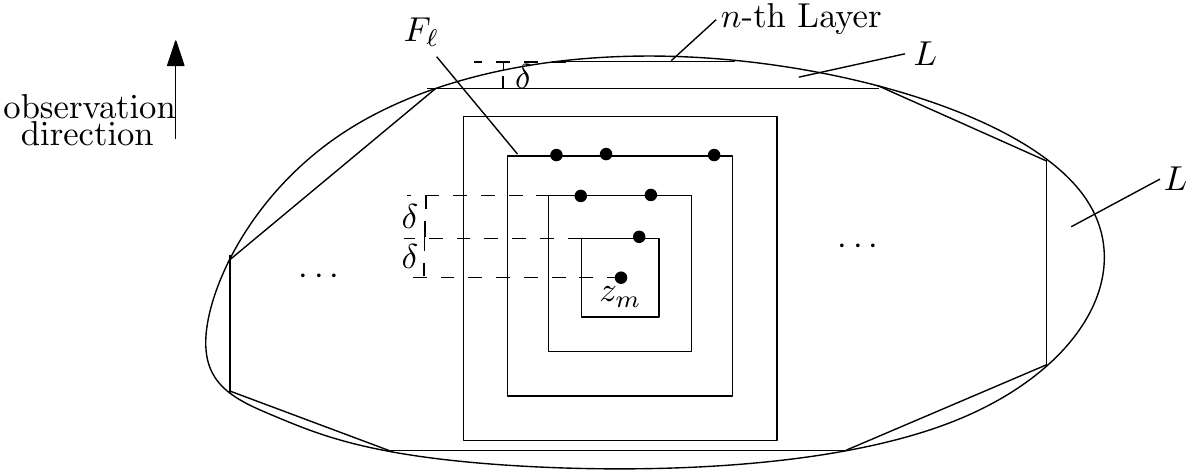}
		\caption{A schematic illustration on the counting criterion.}
		\label{fig:counting}
	\end{figure}
	
	Following this counting criterion, we can further see from \eqref{C1m0} that
	\begin{align}\label{C1m0-1}
	\sum_{m=1}^M|C^0_{1,m}|^2&\leq c_r^{-6}{c_0^{\epsilon_r}}^2\lVert U^{\mathring{\epsilon}_r}\rVert_{L^{\infty}(\Omega)}^2\sum_{\ell=1}^{M^{\frac{1}{3}}}\sum_{z_\ell\in F_\ell}\left(\sum_{n=1}^{M^{\frac{1}{3}}}|\Omega_n|[(2n+1)^2-(2n-1)^2]\frac{1}{(n+\ell)^3\delta^3}\right)^2\notag\\
	&\leq 64c_r^{-6}{c_0^{\epsilon_r}}^2\lVert U^{\mathring{\epsilon}_r}\rVert_{L^{\infty}(\Omega)}^2\sum_{\ell=1}^{M^\frac{1}{3}}\left(\sum_{n=1}^{M^{\frac{1}{3}}} \frac{n\ell}{(n+\ell)^3}\right)^2,
	\end{align}
	where the number of the particles on each $F_\ell$ is of order $\ell^2$; see \cite[Lemma A.1]{AM} for more detailed discussions.
	In \eqref{C1m0-1}, it is easy to see that
	\begin{equation}\notag
	\sum_{n=1}^{M^{\frac{1}{3}}}\frac{n\ell}{(n+\ell)^3}\leq \ell\int_{1}^{M^{\frac{1}{3}}}\frac{n}{(n+\ell)^3}\,dn=\ell\cdot\left(-\frac{\ell+2n}{2(\ell+n)^2}\right)\big|^{M^{\frac{1}{3}}}_{n=1},
	\end{equation}
	and thus
	\begin{equation}\label{nl}
	\sum_{\ell=1}^{M^\frac{1}{3}}\left(\sum_{n=1}^{M^{\frac{1}{3}}} \frac{n\ell}{(n+\ell)^3}\right)^2\leq 3\sum_{\ell=1}^{M^{\frac{1}{3}}}\left(\frac{\ell^2(\ell+2)^2}{(\ell+1)^4}+\frac{\ell^2}{4(\ell+M^{\frac{1}{3}})^2}+\frac{M^{\frac{2}{3}}\ell^2}{4(\ell+M^{\frac{1}{3}})^4}\right).
	\end{equation}
	Recall that $M=\Oh(\delta^{-3})$. Now, we evaluate the three terms on the right hand side of \eqref{nl}, respectively.
	It is clear that for the first term, there holds
	\begin{equation}\label{term1}
	\sum_{\ell=1}^{[\delta^{-1}]}\frac{\ell^2(\ell+2)^2}{(\ell+1)^4}\leq c_{\mathrm{count}}^{(1)}\delta^{-1},
	\end{equation}
	where $c_{\mathrm{count}}$ is a positive constant. For the second term, we have 
	\begin{align}\label{term2}
	\sum_{\ell=1}^{[\delta^{-1}]}\frac{\ell^2}{4(\ell+\delta^{-1})^2}&\leq  \int_{1}^{[\delta^{-1}]}\frac{\ell^2}{(\ell+\delta^{-1})^2}\,d\ell\notag\\
	&\leq \int_{1}^{[\delta^{-1}]}\frac{(\ell+\delta^{-1})^2}{(\ell+\delta^{-1})^2}\,d\ell-\int_{1}^{[\delta^{-1}]}\frac{2\delta^{-1}\ell}{(\ell+\delta^{-1})^2}\,d\ell-\int_{1}^{[\delta^{-1}]}\frac{\delta^{-2}}{(\ell+\delta^{-1})^2}\,d\ell\notag\\
	&\leq
	(\delta^{-1}-1)-2\delta^{-1}\ln\frac{2\delta^{-1}}{1+\delta^{-1}}+\frac{\delta^{-2}}{1+\delta^{-1}}-\frac{\delta^{-1}}{2}\notag\\
	&=\frac{3\delta^{-2}-\delta^{-1}-2}{2(1+\delta^{-1})}-2\delta^{-1}\ln\frac{2\delta^{-1}}{1+\delta^{-1}},
	\end{align}
	by direct computations. Similarly, for the third term, we get
	\begin{align}\label{term3}
	\sum_{\ell=1}^{[\delta^{-1}]}\frac{\delta^{-2}\ell^2}{4(\ell+\delta^{-1})^4}&\leq \delta^{-2}\int_{1}^{[\delta^-1]}\frac{\ell^2}{(\ell+\delta^{-1})^4}\,d\ell\notag\\
	&\leq \delta^{-2}\int_{1}^{[\delta^{-1}]}\left(\frac{(\ell+\delta^{-1})^2}{(\ell+\delta^{-1})^4}-\frac{2\delta^{-1}\ell}{(\ell+\delta^{-1})^4}-\frac{\delta^{-2}}{(\ell+\delta^{-1})^4}\right)\,d\ell\notag\\
	&\leq \frac{3}{2}\delta^{-1}+\frac{\delta^{-1}}{1+\delta^{3}}-\frac{\delta^{-1}}{1+\delta^2}.
	\end{align}
	Subsituting \eqref{term1}, \eqref{term2} and \eqref{term3} into \eqref{nl}, we obtain
	\begin{equation}\label{nl1}
	\sum_{\ell=1}^{M^\frac{1}{3}}\left(\sum_{n=1}^{M^{\frac{1}{3}}} \frac{n\ell}{(n+\ell)^3}\right)^2\leq \left(3c_{\mathrm{count}}^{(1)}+\frac{9}{2}\right)\delta^{-1}+\frac{2(3\delta^{-2}-\delta^{-1}-2)}{1+\delta^{-1}}-6\delta^{-1}\ln\frac{2\delta^{-1}}{1+\delta^{-1}}+\frac{3\delta(1-\delta)}{(1+\delta^3)(1+\delta^2)}.
	\end{equation}
	By direct calculations, as $a\to0$, we see that
	\begin{equation}\notag
	\frac{2(3\delta^{-2}-\delta^{-1}-2)}{1+\delta^{-1}}-6\delta^{-1}\ln\frac{2\delta^{-1}}{1+\delta^{-1}}+\frac{3\delta(1-\delta)}{(1+\delta^3)(1+\delta^2)}=\Oh(\delta^{-1}):=c_{\mathrm{count}}^{(2)}\delta^{-1}
	\end{equation}
	for $c_{\mathrm{count}}^{(2)}$ being a positive constant. Denote $c_{\mathrm{aver}}:=\frac{1}{3}(3c_{\mathrm{count}}^{(1)}+\frac{9}{2}+c_{\mathrm{count}}^{(2)})$, then from \eqref{nl1}, we can deduce that
	\begin{equation}\label{C1m0-final}
	\sum_{m=1}^M|C^0_{1,m}|^2\leq 64c_r^{-6}{c_0^{\epsilon_r}}^2\lVert U^{\mathring{\epsilon}_r}\rVert_{L^{\infty}(\Omega)}^2 c_{\mathrm{aver}}\delta^{-1}=64{c_0^{\epsilon_r}}^2\lVert U^{\mathring{\epsilon}_r}\rVert_{L^{\infty}(\Omega)}^2 c_{\mathrm{aver}}c_r^{-7}a^{-1}.
	\end{equation}
	
To estimate $C_{1,m}^R$, based on the inequality 
	\begin{equation}\notag
	|(\Pi_k-\Pi_0)(x,z)|\leq \frac{3k^2}{|x-z|}+k^3\quad\mbox{for }\ x\in S_m, z\in L,
	\end{equation}
	we have 
	\begin{align}\label{cr1m0}
	|C^R_{1,m}|&\leq \frac{1}{|S_m|}c_r^{-3}c_0^{\epsilon_r}\lVert U^{\mathring{\epsilon}_r}\rVert_{L^\infty(\Omega)}\int_{L}\int_{S_m}\left( \frac{3k^2}{|x-z|}+k^3\right)\,dx\,dz.\notag\\
	&\leq \frac{1}{|S_m|}c_r^{-3}c_0^{\epsilon_r}\lVert U^{\mathring{\epsilon}_r}\rVert_{L^\infty(\Omega)}|S_m|\left( \int_{B(x,c_0\delta)}\frac{3k^2}{|x-z|}\,dz+k^3|L|\right),
	\end{align}
	where we use the fact that for any $x\in S_m$, there exists a constant $c_0>0$ large enough such that $L\subset B(x,c_0\delta)$ by the distribution of the small particles. Moreover, recalling Remark \ref{layer}, we know that $|L|=\Oh(\delta)$.
	%	Since for any $j=1,\cdots, M$, $|\Omega_j|=\Oh(\delta^3)$, then $\mathrm{diam}\Omega_j=\Oh(\delta)$, which indicates that the area of the intersecting surfaces with $\partial\Omega$ if of order $\delta^2$. Thus the number of such cubes will not exceed of order $\delta^{-2}$. 
	Therefore, we can derive from \eqref{cr1m0} by direct calculations that
	\begin{align}
	|C^R_{1,m}|&\leq c_r^{-3}c_0^{\epsilon_r}\lVert U^{\mathring{\epsilon}_r}\rVert_{L^\infty(\Omega)} \left( 3k^2\int_{0}^{2\pi}\int_{0}^{\pi}\int_{0}^{c_0\delta}\frac{1}{r}r^2\sin\theta\,dr\,d\theta\,d\phi+k^3\delta\right)\notag\\
	&\leq k^2 c_r^{-3}c_0^{\epsilon_r}\lVert U^{\mathring{\epsilon}_r}\rVert_{L^\infty(\Omega)}\delta(6\pi c_0^2\delta+k),\notag
	\end{align}
	and thus
	\begin{align}\label{C1mr}
	\sum_{m=1}^M|C^R_{1,m}|^2&\leq 2k^4 c_r^{-6}{c_0^{\epsilon_r}}^2\lVert U^{\mathring{\epsilon}_r}\rVert_{L^\infty(\Omega)}^2(36\pi^2 c_0^4\delta^2+k^2)\delta^{-1}\notag\\
	&\leq 2k^4 {c_0^{\epsilon_r}}^2\lVert U^{\mathring{\epsilon}_r}\rVert_{L^\infty(\Omega)}^2(36\pi^2 c_0^4c_r^2a^2+k^2)c_r^{-7}a^{-1}.
	\end{align}
	
	Now we consider $C_{2,m}$. Since there holds
	\begin{equation}\notag
	|\nabla\Phi_k(x,z)|\leq\frac{1}{|x-z|}\left(\frac{1}{|x-z|}+k\right)\quad\mbox{for }\ x\in S_m, z\in L,
	\end{equation}
	we can know that
	\begin{equation}\notag
	|C_{2,m}|\leq\frac{k}{|S_m|}c_r^{-3}c_0^{\mu_r}\lVert V^{\mathring{\mu}_r}\rVert_{L^{\infty}(\Omega)}\int_{L}\int_{S_m}\frac{1}{|x-z|}\left(\frac{1}{|x-z|}+k\right)\,dx\,dz.
	\end{equation}
	Similar to \eqref{cr1m0}, we can deduce that
	\begin{equation}\notag
	|C_{2,m}|\leq k c_r^{-3}c_0^{\mu_r}\lVert V^{\mathring{\mu}_r}\rVert_{L^{\infty}(\Omega)} 2\pi c_0\delta(2+kc_0\delta),
	\end{equation}
	and 
	\begin{align}\label{C2m}
	\sum_{m=1}^M|C_{2,m}|^2&\leq 8\pi^2c_0^2\delta^{-3}k^2 c_r^{-6}{c_0^{\mu_r}}^2\lVert V^{\mathring{\mu}_r}\rVert_{L^{\infty}(\Omega)}^2\delta^2(4+k^2c_0^2\delta^2)\notag\\
	&\leq 8\pi^2c_0^2k^2 {c_0^{\mu_r}}^2\lVert V^{\mathring{\mu}_r}\rVert_{L^{\infty}(\Omega)}^2(4+k^2c_0^2c_r^2a^2)c_r^{-7}a^{-1}.
	\end{align}
	
	Combining with \eqref{C1m0-final}, \eqref{C1mr} and \eqref{C2m}, we obtain that
	\begin{align}\label{C-final}
	\sum_{m=1}^M|C|^2&\leq3\left(\sum_{m=1}^M|C_{1,m}^0|^2+\sum_{m=1}^M|C^R_{1,m}|^2+\sum_{m=1}^M|C_{2,m}|^2\right)\notag\\
	&=\Oh\left(\right. [{c_0^{\epsilon_r}}^2\lVert U^{\mathring{\epsilon}_r}\rVert_{L^{\infty}(\Omega)}^2 (32c_{\mathrm{aver}}+
	k^4 (36\pi^2 c_0^4c_r^2a^2+k^2))\notag\\
	&+\pi^2c_0^2k^2 {c_0^{\mu_r}}^2\lVert V^{\mathring{\mu}_r}\rVert_{L^{\infty}(\Omega)}^2(4+k^2c_0^2c_r^2a^2)]c_r^{-7}a^{-1}\left.\right)	%		\Oh\left(\right.({c_0^{\epsilon_r}}^2\lVert U^{\mathring{\epsilon}_r}\rVert^2_{L^\infty(\Omega)}(32c_{\mathrm{aver}}+k^4(36c_0^4\pi^2c_r^2a^2+k^2))\notag\\
	%		&\qquad+\pi^2k^2c_0^2{c_0^{\mu_r}}^2\lVert V^{\mathring{\mu}_r}\rVert^2_{L^\infty(\Omega)}(4+k^2c_0^2c_r^2a^2))c_r^{-7}a^{-1}\left.\right)		
	\end{align}
	So far, we have proved the estimates \eqref{group-est} with \eqref{A-final}, \eqref{B1mr}, \eqref{B2m}, \eqref{IB} and \eqref{C-final}.

	Recalling Definition \ref{def2} for the bounded linear operator $K^{\epsilon_r}$, it is clear that in \eqref{B1m0}
	\begin{equation}\label{key}
	K^{\epsilon_r}\left(\frac{1}{|S_m|}\int_{S_m}U^{\mathring{\epsilon}_r}(y)\,dy\right)=\frac{1}{|S_m|}\int_{\Omega_m}\int_{S_m}\Pi_0(x,z)c_r^{-3}[C^{\mathrm{T}}_{\epsilon_r}]\left(\frac{1}{|S_m|}\int_{S_m}U^{\mathring{\epsilon}_r}(y)\,dy\right)\,dx\,dz
	\end{equation}
	for the average value of $U^{\mathring{\epsilon}_r}$ over $S_m$. 
	
	By rearranging terms in \eqref{main-eq2}, we deduce that
	\begin{align}\label{U-m1}
	&(I-K^{\epsilon_r})\left(\frac{1}{|S_m|}\int_{S_m}U^{\mathring{\epsilon}_r}(x)\,dx\right)-a^3\sum_{j=1\atop j\neq m}^M(\Pi_k(z_m,z_j)[C^{\mathrm{T}}_{\epsilon_r}]\frac{1}{|S_j|}\int_{S_j}U^{\mathring{\epsilon}_r}(y)\,dy\notag\\
	&+ik\nabla\Phi_k(z_m,z_j)\times([C^{\mathrm{T}}_{\mu_r}]\frac{1}{|S_j|}\int_{S_j}V^{\mathring{\mu}_r}(y)\,dy))\notag\\
	&=E^{in}(z_m)+\Oh(c_ra)+A+I_B+B^R_{1,m}+B_{2,m}+C
	\end{align}
	where $A, I_B, B^R_{1,m}, B_{2,m}$ and $C$ are defined by \eqref{A}, \eqref{B1m0},\eqref{B-split}, \eqref{B2m-def} and \eqref{C}, respectively. Subtracting the discrete linear algebraic system \eqref{dislinear} from \eqref{U-m1}, under condition b) in Definition \ref{def2}, where we know that $I-K^{\epsilon_r}$ and $I-K^{\mu_r}$ are invertible, then
	we get that
	\begin{align}\label{U-m2}
	&\left((I-K^{\epsilon_r})\frac{1}{|S_m|}\int_{S_m}U^{\mathring{\epsilon}_r}(x)\,dx-U_m\right)-a^3\sum_{j=1\atop j\neq m}^M((\Pi_k(z_m,z_j)[C^{\mathrm{T}}_{\epsilon_r}](I-K^{\epsilon_r})^{-1}(I-K^{\epsilon_r})\notag\\
	&\frac{1}{|S_j|}\int_{S_j}U^{\mathring{\epsilon}_r}(y)\,dy- \Pi_k(z_m,z_j)[P^{\epsilon_r}_0]U_j)+ik\nabla\Phi_k(z_m,z_j)\times([C^{\mathrm{T}}_{\mu_r}](I-K^{\mu_r})^{-1}(I-K^{\mu_r})\notag\\
	&\frac{1}{|S_j|}\int_{S_j}V^{\mathring{\mu}_r}(y)\,dy-[P_0^{\mu_r}]V_j))=\Oh(c_ra)+A+I_B+B^R_{1,m}+B_{2,m}+C.
	\end{align}
	Utilizing condition a) in Definition \ref{def2}, we can further obtain from \eqref{U-m2} that
	\begin{align}\notag
	&\left((I-K^{\epsilon_r})\frac{1}{|S_m|}\int_{S_m}U^{\mathring{\epsilon}_r}(x\,dx)-U_m\right)-a^3\sum_{j=1\atop j\neq m}^M(\Pi_k(z_m,z_j)[P_0^{\epsilon_r}]\left(\right. (I-K^{\epsilon_r})\frac{1}{|S_j|}\int_{S_j}U^{\mathring{\epsilon}_r}(y)\,dy\notag\\
	&-U_j\left.\right)+ ik \nabla\Phi_k(z_m,z_j)\times([P_0^{\mu_r}]((I-K^{\mu_r})\frac{1}{|S_j|}\int_{S_j}V^{\mathring{\mu}_r}(y)\,dy-V_j)))\notag\\
	&=\Oh(c_ra)+A+I_B+B^R_{1,m}+B_{2,m}+C,\notag
	\end{align}
	which completes the proof.

\end{document}